\numberwithin{equation}{section}
\theoremstyle{plain}
\newtheorem{thm}{Theorem}[section]
\newtheorem{defn}[thm]{Definition}
\newtheorem{prop}[thm]{Proposition}
\newtheorem{lem}[thm]{Lemma}
\newtheorem{cor}[thm]{Corollary}
\theoremstyle{definition}
\newtheorem{rem}[thm]{Remark}
\renewcommand{\b}{\bullet}
\newcommand{\beast}{\begin{eqnarray*}}
\newcommand{\east}{\end{eqnarray*}}
\newcommand{\N}{{\Bbb N}}
\newcommand{\Z}{{\Bbb Z}}
\newcommand{\Q}{{\Bbb Q}} 
\newcommand{\R}{{\Bbb R}}
\newcommand{\C}{{\Bbb C}}
\newcommand{\Spf}{{\mathrm{Spf}}\,}
\newcommand{\Spm}{{\mathrm{Spm}}\,}
\newcommand{\lra}{\longrightarrow}
\newcommand{\hra}{\hookrightarrow}
\newcommand{\lla}{\longleftarrow}
\newcommand{\wt}[1]{\widetilde{#1}}
\newcommand{\ol}[1]{\overline{#1}}
\newcommand{\os}{\overset}
\newcommand{\conv}{{\mathrm{conv}}}
\newcommand{\Hom}{{\mathrm{Hom}}}
\newcommand{\Ker}{{\mathrm{Ker}}}
\newcommand{\im}{{\mathrm{Im}}}
\newcommand{\Coker}{{\mathrm{Coker}}}
\newcommand{\id}{{\mathrm{id}}}
\newcommand{\End}{{\mathrm{End}}}
\newcommand{\res}{{\mathrm{res}}}
\newcommand{\dlog}{{\mathrm{dlog}}\,}
\newcommand{\Ext}{{\mathrm{Ext}}}
\newcommand{\an}{{\mathrm{an}}}
\newcommand{\cE}{{\cal E}}
\newcommand{\cF}{{\cal F}}
\newcommand{\cO}{{\cal O}}
\newcommand{\cP}{{\cal P}}
\newcommand{\cU}{{\cal U}}
\newcommand{\fX}{{\mathfrak{X}}}
\newcommand{\fY}{{\mathfrak{Y}}}
\newcommand{\LNM}{{\mathrm{LNM}}}
\newcommand{\ULNM}{{\mathrm{ULNM}}}
\newcommand{\type}{{\mathrm{type}}}
\newcommand{\NID}{{\mathrm{(NID)}}}
\newcommand{\NLD}{{\mathrm{(NLD)}}}
\newcommand{\PTD}{{\mathrm{(PTD)}}}
\newcommand{\sing}{{\mathrm{sing}}}
\newcommand{\0}{{\bold 0}}
\newcommand{\1}{{\bold 1}}
\renewcommand{\k}{{\bold k}}
\newcommand{\vv}{{\bold v}}
\newcommand{\w}{{\bold w}}
\newcommand{\const}{{\mathrm{const}}}
\renewcommand{\sp}{{\mathrm{sp}}}
\renewcommand{\wt}{\widetilde}
\renewcommand{\res}{{\mathrm{res}}}
\renewcommand{\d}{\dagger}
\newcommand{\lam}{\lambda}
\newcommand{\pa}{\partial}
\newcommand{\Mat}{{\mathrm{Mat}}}
\newcommand{\e}{{\bold e}}
\newcommand{\f}{{\bold f}}
\newcommand{\Exp}{{\mathrm{Exp}}}
\renewcommand{\End}{{\mathrm{End}}}
\newcommand{\dsum}{\displaystyle\sum}
\newcommand{\I}{{\bold i}}
\begin{document}
\title{On logarithmic extension of overconvergent isocrystals}
\author{Atsushi Shiho
\footnote{
Graduate School of Mathematical Sciences, 
University of Tokyo, 3-8-1 Komaba, Meguro-ku, Tokyo 153-8914, JAPAN. 
E-mail address: shiho@ms.u-tokyo.ac.jp \,
Mathematics Subject Classification (2000): 12H25 (primary), 14F30 (secondary)}}
\date{}
\maketitle

\begin{abstract}
In this paper, we establish a criterion for an overconvergent isocrystal 
on a smooth variety over a field of characteristic $p>0$ to extend 
logarithmically to its smooth compactification 
whose complement is a strict normal crossing divisor. 
This is a generalization of a result of Kedlaya, who treated the case of 
unipotent monodromy. Our result is regarded as 
a $p$-adic analogue of the theory of canonical extension of 
regular singular integrable connections on smooth varieties of characteristic 
$0$. 
\end{abstract}

\tableofcontents

\section*{Introduction}

Let $K$ be a complete discrete valuation field of mixed characteristic 
$(0,p)$ with ring of integers $O_K$ and 
residue field $k$, and let $X \hra \ol{X}$ be an open 
immersion of smooth $k$-varieties such that $Z = \ol{X}-X$ is a 
simple normal crossing divisor. Denote the log structure on $\ol{X}$ 
associated to $Z$ by $M$. 
The purpose of this paper is to 
give a criterion for an overconvergent isocrystal on $(X,\ol{X})/K$ 
to extend to an isocrystal on log convergent site 
$((\ol{X},M)/O_K)_{\conv}$. This criterion is established by 
Kedlaya in \cite{kedlayaI} in the case of unipotent monodromy. 
In this paper, we generalize his result to the case of arbitrary 
monodromy (although we need certain `$p$-adic non-Liouvilleness 
assumption'.) \par 
Our result can be regarded as a narutal $p$-adic analogue of the theory 
of canonical extension of regular singular integrable connections on 
algebraic varieties developped in \cite{deligne}. So first we 
give a brief review of this theory. \par 
First let us consider the local situation. Let $K$ be a discrete valuation 
field of equal characterisic $0$ and denote the ring of integers by 
$O_K$. Let us fix a norm $|\cdot |$ corresponding to the valuation of 
$K$ and fix a uniformizer $t$ of $O_K$. Assume we are given 
a derivation $d:K \lra Kdt = K\dlog t$ on $K$. Then we can define 
the notion of a connection $\nabla: E \lra E\,\dlog t$ on 
a finite dimensional vector space $E$ over $K$. Let us assume given such
 $(E,\nabla)$ and let $\pa$ be the composite map 
$E \os{\nabla}{\lra} E\dlog t \os{=}{\lra} \allowbreak 
E$, where the second map sends 
$e\dlog t$ to $e$\,($e \in E$). 
Then $(E,\nabla)$ is called regular singular if 
the spectral norm $|\pa|_{\sp} := \lim_{n\to\infty}|\pa^n|^{1/n}$ 
(where $|\pa^n|$ denotes the operator norm of $\pa^n$) of $\pa$ is 
equal to $1$($= |d|_{\sp}$) (\cite[II 1.9, 1.11]{deligne}). 
It is known (\cite[II 1.12]{deligne}) 
that $(E,\nabla)$ is regular singular if and only if 
it comes from a log connection $\nabla: E_0 \lra E_0 \dlog t$ on 
a finitely generated free $O_K$-module $E_0$, that is, it 
extends to $O_K$ logarithmically. \par 
In global situation, we can define the notion of regular singularity 
in the following way: Let $j: X \hra \ol{X}$ be an open immersion of 
smooth $\C$-varieties such that $Z=\ol{X}-X$ is a simple normal crossing 
divisor. Let us denote the log structure on $\ol{X}$ 
associated to $Z$ by $M$. 
Then a locally free $\cO_X$-module $E$ endowed with an 
integrable connection $\nabla: E \lra E \otimes \Omega^1_{X/\C}$ 
is called regular singular if, for any generic point $\eta$ of $Z$, 
the restriction of $(E,\nabla)$ to 
$(j_*\cO_{X})_{\eta}$($= \C(X)$ endowed with the discrete valuation 
associated to $\eta$) is regular singular
(\cite[II 4.2, 4.4]{deligne}). Let us assume given such 
$(E,\nabla)$. Then, if $(E,\nabla)$ is unipotent 
(that is, the monodromy representation associated to $(E,\nabla)$ is 
unipotent along each irreducible component of $Z$), 
it extends uniquely to an integrable log connection on $(\ol{X},M)$ which 
has nilpotent residues (\cite[II 5.2]{deligne}). In general case, 
if we fix a section $\tau:\C/\Z \lra \C$ of the canonical projection, 
we can extend $(E,\nabla)$ uniquely to an integrable log connection on 
$(\ol{X},M)$ whose exponents along each irreducible component of $Z$ 
are in $\tau(\C/\Z)$ (\cite[II 5.4]{deligne}). \par 
Now let us return to the $p$-adic situation. 
First we consider the local situation. Let $K$ be a complete 
non-Archimedean valued field with ring of integers $O_K$ and 
residue field $k$ of characteristic $p>0$. For $\lam \in 
\{0\} \cup p^{\Q_{<0}}$, let $A^1_K[\lam,1)$ be the rigid analytic 
annulus with radius in $[\lam,1)$ over $K$. Consider a 
locally free module $E$ of finite rank on $A^1_K[\lam,1)$ endowed with 
a connection $\nabla: E \lra E\,dt$ (where $t$ is the coordinate of 
the annulus $A^1_K[\lam,1)$) and define $\pa$ by the composite map 
$E \os{\nabla}{\lra} Edt \os{=}{\lra} E$, where the second map 
sends $e\,dt$ to $e$ \,($e \in E$). 
For $\rho \in [\lam,1)\cap p^{\Q}$, Let us denote the spectral 
norm on $\Gamma(A^1_K[\rho,\rho],\cO)$ by $|\cdot|_{\rho}$. 
We call that $(E,\nabla)$ satisfies the Robba condition if 
the spectral norm $|\pa|_{\sp,\rho} = \lim_{n\to\infty}|\pa^n|^{1/n}_{\rho}$ 
of $\pa$ at $\rho$ is equal to the spectral norm 
$|d|_{\sp,\rho}$ of $d$(=derivation on $\cO_{A^1_K[\lam,1)}$) at $\rho$ 
for any $\rho\in [\lam,1)\cap p^{\Q}$. (See \cite[p.136, 11.1]{cmsurvey}.) 
For such $(E,\nabla)$ of rank $\mu$, one can define the notion of 
exponent in the sense of Christol-Mebkhout, which is an element of 
$\Z_p^{\mu}$ modulo certain equivalence relation 
(see \cite[11.5]{cmsurvey} for detail). 
The Robba condition is considered as a $p$-adic analogue of the notion 
of regular singularity. Indeed, it is known 
(\cite[6-2.6]{cmII}, \cite[12.1]{cmsurvey} plus some calculation) 
that if $(E,\nabla)$ satisfies the Robba condition and its exponent 
(in the sense of Christol-Mebkhout) has $p$-adically non-Liouville 
difference (see \cite[10.8]{cmsurvey} for definition), $(E,\nabla)$ extends to 
a locally free module with log connection on $A_K^1[0,1)$. \par 
Let us consider now the global situation: Let $K,O_K,k$ be as above and 
assume that $K$ is discretely valued. 
Let $X \hra \ol{X}$ be an open immersion of 
smooth $k$-varieties such that $Z=\ol{X}-X=\bigcup_{i=1}^rZ_i$ 
is a simple normal crossing 
divisor. Let us denote the log structure on $\ol{X}$ 
associated to $Z$ by $M$. We will define, for an overconvergent 
isocrystal $\cE$ on $(X,\ol{X})/K$, the notion of the Robba condition. 
(See Definition \ref{defrobba} 
for detail.) Roughly speaking, we call that $\cE$ satisfies 
the Robba condition if, for any generic point $\eta$ of $Z$, 
we can take certain annulus $A^1_L[\lam,1)$ over 
$\Spm L$ (which is a field defined by using certain lift around 
$\eta$) such that the connection on $A^1_L[\lam,1)$ induced by 
$\cE$ satisfies the Robba condition. We call $\cE$ is $\NLD$ when 
the exponents (in the sense of Christol-Mebkhout) 
of the connections on the annuli have $p$-adically 
non-Liouville difference for any $\eta$. 
For an overconvergent isocrystal $\cE$ on $(X,\ol{X})/K$ satisfying 
the Robba condition which is $\NLD$, we can define the set 
$\ol{\Sigma}_{\cE}$ of exponents of $\cE$, which is a subset of 
$(\Z_p/\Z)^r$. (See \S 3 for detail.) \par 
With this terminology, we can state our main result as follows: Let 
$\cE$ be 
an overconvergent isocrystal on $(X,\ol{X})/K$ satisfying 
the Robba condition which is $\NLD$. Then, if we fix a section 
$\tau: (\Z_p/\Z)^r \lra \Z_p^r$ of the canonical projection, 
we can extend $\cE$ uniquely to an isocrystal on log convergent 
site $((\ol{X},M)/O_K)_{\conv}$ whose exponents along $Z$ are contained 
in $\tau(\ol{\Sigma}_{\cE})$. This is a $p$-adic analogue of 
the logarithmic extension of integrable connections which we 
explained above. We would like to remark here that 
this result is proved by Kedlaya (\cite{kedlayaI}) in the case where 
$\cE$ has unipotent monodromy (the case $\ol{\Sigma}_{\cE} = \{0\}$ and 
$\tau(0) = 0$). \par 
The strategy of the proof is basically the same as that in 
\cite{kedlayaI}, which we briefly explain here. 
By the assumption of the Robba condition and the condition $\NLD$, 
$\cE$ extends logarithmically to the discs $A^1_L[0,1)$ at each generic 
point of $Z$. 
First we prove that this condition 
implies the log-extendability of $\cE$ to relative disc over  
$]Z^0[$, where $Z^0$ is the smooth locus of $Z$ and $]\cdot[$ denotes 
certain rigid analytic space defined by using some local lift of $Z^0$. 
We prove this by using the proposition called `generizarion' (see 
Proposition \ref{3.4.3}). Next we prove the log-extendability of 
$\cE$ to certain relative disc over a strict neighborhood of $]Z^0[$ in 
$]Z[$, by using the proposition called `overconvergent generization'
 (see Proposition \ref{3.5.3}). Then we extend $\cE$ logarithmically to 
the boundary step by step, along each irreducible component of $Z$. \par 
Let us explain the content of each section. 
In Section 1, we prove basic properties on 
log-$\nabla$-modules on rigid spaces. We introduce the notion of 
log-$\nabla$-modules with exponents in $\Sigma$ and 
the notion of $\Sigma$-unipotent log-$\nabla$-modules ($\Sigma \subseteq 
\ol{K}^r$ for some $r$), which are 
the generalization of log-$\nabla$-modules with nilpotent residues and 
unipotent log-$\nabla$-modules introduced in \cite[\S 3]{kedlayaI}. 
In Section 2, we prove the three key propositions which we need for 
the proof of our main result. The first one, called generization, asserts 
that the property of $\Sigma$-unipotence is `generic on the base'. 
The second one, called overconvergent generization, asserts that 
the property of $\Sigma$-unipotence can be extended `overconvergently 
on the base'. The third one asserts that, under the condition of 
log-convergence, the property of `having 
exponents in $\Sigma$' implies the 
$\Sigma$-unipotence. They are also generalization of the 
corresponding results of 
Kedlaya in \cite{kedlayaI}, but we would like to point out here that 
our results in this section are slightly different from those in 
Kedlaya partly because Kedlaya's argument in 
\cite[3.5.3]{kedlayaI} and \cite[3.6.2]{kedlayaI}
seems to contain an error\footnote{After putting the first version of 
this paper to ArXiv, Kedlaya informed me how to correct the errors in 
the paper \cite{kedlayaI}, which is different from the argument in 
this paper. His correction is written in \cite[Appendix A]{kedlayaIV}.} 
and partly because of the technical reason. 
In Section 3, we prove the main result of this paper, using 
the results in the previous section. We introduce the notion of 
an isocrystal on log convergent site with exponents in $\Sigma$ and 
the notion of an overconvergent isocrystal having $\Sigma$-unipotent 
monodromy. Also we introduce the notion of an overconvergent isocrystal 
satisfying the Robba condition and the property $\NLD$, and check that 
an overconvergent isocrystal 
satisfying the Robba condition and the property $\NLD$ has 
$\Sigma$-unipotent monodromy (for certain $\Sigma$), 
by using generization. Then we prove that, under some condition on 
$\Sigma$, an overconvergent isocrystal having $\Sigma$-unipotent 
monodromy can be extended uniquely to 
an isocrystal on log convergent site with exponents in $\Sigma$, 
by using the three key propositions proved in the previous section. \par 
A part of this work was done during the author's stay at Universit\'e 
de Rennes I. The author would like to thank to Pierre Berthelot 
for giving me an opportunity to stay there and to the members there 
for the hospitality. 
The author is partly supported by Grant-in-Aid for Young Scientists (B), 
the Ministry of Education, Culture, Sports, Science and Technology of 
Japan and JSPS Core-to-Core program 18005 whose representative is 
Makoto Matsumoto. \par

\section*{Convention}
(1) \,\, Throughout this paper, $k$ is a field of characteristic 
$p>0$ and a $k$-variety means a reduced separated scheme of finite type 
over $k$. \\
(2) \,\, Throughout this paper, $K$ is a field of characteristic $0$ 
complete with respect to the non-Archimedean absolute value 
$|\cdot|: K \lra \R_{\geq 0}$ with residue field $k$ of 
characteristic $p>0$. Let us put 
$\Gamma^* := \sqrt{|K^{\times}|} \cup \{0\}$ and let 
$O_K$ be the the ring of integers of $K$. In Section 3, 
we will assume moreover 
that the absolute value $|\cdot|$ is discretely valued. 
We denote the algebraic closure of $K$ by $\ol{K}$. 
For a $p$-adic formal scheme $P$ topologically of finite type over $O_K$, 
we denote the associated rigid space by $P_K$. \\
(3) \,\, We use freely the notion 
concerning isocrystals on log convergent site and overconvergent 
isocrystals. For the former, see \cite{shiho2}, \cite{shiho3} and 
\cite[\S 6]{kedlayaI}. For the latter, see \cite{berthelotrig} and 
\cite[\S 2]{kedlayaI}.

\section{Log-$\nabla$-modules and $\Sigma$-unipotence} 

In this section, first we recall the notion of (log-)$\nabla$-modules and 
the residues of log-$\nabla$-modules, 
following \cite[2.3]{kedlayaI}. 
Then we recall the notion of exponents of the residues, following 
\cite{bc} and introduce the notion of $\Sigma$-unipotence for 
log-$\nabla$-modules for a subset 
$\Sigma$ of $\ol{K}^r$ (for some $r$): It is a generalization 
of the notion of unipotence for log-$\nabla$-modules introduced in 
\cite[3.2.5]{kedlayaI}. After that, we prove some basic properties 
on log-$\nabla$-modules with exponents in $\Sigma$ and 
$\Sigma$-unipotent log-$\nabla$-modules, which are 
generalization of the results in \cite[3.2]{kedlayaI}. \par 
First let us recall the definition of (log-)$\nabla$-modules. 
For a rigid space $X$ over $K$, let $\Omega^1_{X/K}$ be the 
sheaf of continuous differentials on $X$ over $K$ and for a morphism 
$f:X \lra Y$ of rigid spaces over $K$, let us put 
$\Omega^1_{X/Y}:=\Omega^1_{X/K}/f^*\Omega^1_{Y/K}$. 

\begin{defn}[{\cite[2.3.4]{kedlayaI}}] 
Let $f:X \lra Y$ be a morphism of rigid spaces over $K$. A 
$\nabla$-module on $X$ relative to $Y$ is a coherent $\cO_X$-module 
$E$ endowed with an integrable $f^{-1}\cO_Y$-linear connection 
$\nabla: E \lra E \otimes_{\cO_X} \Omega^1_{X/Y}$. In the case 
$Y=\Spm K$, we omit the term `relative to $Y$'. 
\end{defn} 

It is known (\cite[2.2.3]{berthelotrig}) that when $X$ is a smooth 
rigid space, any $\nabla$-module on $X$ (relative to $\Spm K$) is 
automatically locally free. \par 
Next, when we are given a morphism $f:X \lra Y$ of rigid spaces over $K$ 
and let $x_1, ..., x_r$ be elements in $\Gamma(X,\cO_X)$. Then we define 
the module of logarithmic differentials $\omega^1_{X/Y}$ (denoted by 
$\Omega^{1,\log}_{X/Y}$ in \cite{kedlayaI}) by 
$$\omega^1_{X/Y} := (\Omega^1_{X/Y} \oplus \bigoplus_{i=1}^r \cO_X \cdot 
\dlog x_1)/N,$$
where $N$ is the sheaf locally generated by 
$(dx_i,0)-(0,x_i\dlog x_i)$ $(1 \leq i \leq r)$. 

\begin{defn}[{\cite[2.3.7]{kedlayaI}}]\label{lognabdef}
Let $f:X \lra Y$ be a morphism of rigid spaces over $K$ 
and let $x_1, ..., x_r$ be elements in $\Gamma(X,\cO_X)$. 
Then a 
log-$\nabla$-module on $X$ with respect to $x_1, ..., x_r$ 
relative to $Y$ is a locally free $\cO_X$-module 
$E$ endowed with an integrable $f^{-1}\cO_Y$-linear log connection 
$\nabla: E \lra E \otimes_{\cO_X} \omega^1_{X/Y}$. In the case 
$Y=\Spm K$, we omit the term `relative to $Y$'. 
\end{defn} 

Note that, in the case of log-$\nabla$-modules, the local freeness is 
not automatic even in the case $Y=\Spm K$. So it is built 
in the definition. \par 
Let $f:X \lra Y, x_1,...,x_r$ be as in Definition \ref{lognabdef} and 
let $(E,\nabla)$ be a log-$\nabla$-module on 
$X$ with respect to $x_1, ..., x_r$ relative to $Y$. 
For $1 \leq i \leq r$, let us put $D_i:=\{x_i=0\} \subseteq X$ and 
$M_i:={\rm Im}(\Omega^1_{X/Y} \oplus \bigoplus_{j\not=i}\cO_X \dlog x_j \lra 
\omega^1_{X/Y})$. Then the composite map 
$$ 
E \os{\nabla}{\lra} E \otimes_{\cO_X} \omega^1_{X/Y} \lra 
E \otimes_{\cO_X} (\omega^1_{X/Y}/M_i) \cong 
E \otimes_{\cO_X} \cO_{D_i} \dlog x_i \cong E \otimes_{\cO_X} \cO_{D_i}
$$ 
naturally induces an element $\res_i$ in 
$\End_{\cO_{D_i}}(E \otimes_{\cO_X} \cO_{D_i})$. We call it the 
residue of $(E,\nabla)$ along $D_i$. \par 
Keep the above notation and assume that $Y=\Spm K$, $X$ is a smooth 
affinoid rigid space and that the zero loci of $x_i$ are smooth and 
meet transversally. Then, by the argument of \cite[1.5.3]{bc}, 
we see that there exists a 
polynomial $P_i(x) \in K[x] \,(1 \leq i \leq r)$ satisfying 
$P_i(\res_i)=0$. Take $P_i(x)$ to be minimal and monic satisfying 
$P_i(\res_i)=0$. Then we call the roots of $P_i(x)$ (in $\ol{K}$) 
the exponents of $(E,\nabla)$ along $D_i$. When $X$ is not necessarily 
affinoid, the set of 
exponents of $(E,\nabla)$ is defined as the union of the sets of the 
exponents of $(E,\nabla)|_U$ for affinoid admissible opens 
$U \subseteq X$. \par 
Let $X$ be a smooth 
rigid space endowed with $x_1, ...,x_r \in \Gamma(X,\cO_X)$ 
such that the zero loci of $x_i$ are smooth and 
meet transversally. 
For a subset $\Sigma = \prod_{i=1}^r\Sigma_i$ in 
$\ol{K}^r$ ($\Sigma_i \subseteq \ol{K}$), we define the category 
$\LNM_{X,\Sigma}$ as the category of log-$\nabla$-modules 
on $X$ with respect to $x_1,...,x_r$ such that 
all the exponents along $D_i$ are contained in $\Sigma_i$ $(1 \leq i 
\leq r)$. \par 
As in \cite{kedlayaI}, we call a subinterval $I \subseteq [0,\infty)$ 
aligned if any endpoint of $I$ at which it is closed is contained in 
$\Gamma^*$. We call a subinterval $I \subseteq [0,\infty)$ quasi-open 
when it is open at non-zero endpoints. For an aligned interval $I$, we 
define the rigid space $A^n_K(I)$ by  
$A^n_K(I) := \{(t_1, ..., t_n) \in {\Bbb A}^{n,\an}_K 
\,\vert\, \forall i, |t_i| \in I \}.$ \par 
Following \cite[3.2.4]{kedlayaI}, we use the following convention: 
For a smooth 
affinoid rigid space $X$ 
endowed with $x_1, ...,x_r \in \Gamma(X,\cO_X)$ 
such that the zero loci of $x_i$ are smooth and 
meet transversally, we put 
$\omega^1_{X \times A^n_K[0,0]/K} := 
\omega^1_{X/K} \oplus \bigoplus_{i=1}^n\cO_X \dlog t_i$, where 
$\dlog t_i$ is the free generator `corresponding to 
the $i$-th coordinate on $A^n_K[0,0]$'. Using this, we can define the 
notion of a log-$\nabla$-module $(E,\nabla)$ 
on $X \times A_K^n[0,0]$ with respect to 
$x_1, ..., x_r, t_1, ..., t_n$ and the notion of the residue, the exponents 
of $(E,\nabla)$ along $\{t_i=0\}$ in natural way: 
To give a log-$\nabla$-module $(E,\nabla)$ 
on $X \times A_K^n[0,0]$ with respect to 
$x_1, ..., x_r, t_1, ..., t_n$ is equivalent to give a 
log-$\nabla$-module $(E,\nabla)$ 
on $X$ with respect to 
$x_1, ..., x_r$ and commuting endomorphisms $\pa_i := t_i 
\dfrac{\pa}{\pa t_i}$ of $(E,\nabla)$ 
$(1 \leq i \leq n)$. Also, we can define the 
category $\LNM_{X \times A^n_K[0,0],\Sigma}$ for $\Sigma = 
\prod_{i=1}^{r+n}\Sigma_i \subseteq \ol{K}^{r+n}$ as above: 
A log-$\nabla$-module $(E,\nabla)$ 
on $X \times A_K^n[0,0]$ with respect to 
$x_1, ..., x_r, t_1, ..., t_n$, regarded as a 
log-$\nabla$-module $(E,\nabla)$ 
on $X$ with respect to 
$x_1, ..., x_r$ endowed with commuting endomorphisms $\pa_i := t_i 
\dfrac{\pa}{\pa t_i}$ $(1 \leq i \leq n)$, is in the category 
$\LNM_{X \times A^n_K[0,0],\Sigma}$ if and only if $(E,\nabla)$ is 
in $\LNM_{X,\prod_{i=1}^r \Sigma_i}$ and 
all the eigenvalues of $\pa_i$ (which are a priori 
contained in $\ol{K}$) are 
in $\Sigma_{r+i}$ $(1 \leq i \leq n)$. \par 
For an aligned interval $I \subseteq [0,\infty)$ and 
$\xi := (\xi_1, ..., \xi_n) \in K^n$, we define the log-$\nabla$-module 
$(M_{\xi},\nabla_{M_{\xi}})$ on $A^n_K(I)$ with respect to 
$t_1, ..., t_n$ (which are the coordinates) 
as the log-$\nabla$-module 
$(\cO_{A_K^n(I)},d + \sum_{i=1}^n \xi_i \dlog t_i)$. 
Now we introduce the notion of (potential) $\Sigma$-constance and 
(potential) $\Sigma$-unipotence for 
log-$\nabla$-modules: 

\begin{defn}[cf. {\cite[3.2.5]{kedlayaI}}]\label{unipdef}
Let $X$ be a smooth rigid space endowed with 
$x_1, \allowbreak ..., x_r \in \Gamma(X,\cO_X)$ whose zero loci are smooth and 
meet transversally. Let $I \subseteq [0,\infty)$ 
be an aligned interval and fix $\Sigma := \prod_{i=1}^{r+n}\Sigma_i \subseteq 
\ol{K}^{r+n}$. \\
$(1)$ \,\,\, 
An object $(E,\nabla)$ in 
$\LNM_{X \times A_K^n(I),\Sigma}$ 
$($where $X \times A_K^n(I)$ is endowed with $x_1, ..., x_r, \allowbreak 
t_1, ..., t_n$, 
where $t_i$'s are the 
coordinates in $A_K^n(I))$ is called $\Sigma$-constant if 
$(E,\nabla)$ has the form 
$\pi_1^*(F,\nabla_F) \otimes \pi_2^*(M_{\xi},\nabla_{M_{\xi}})$ for some 
$(F,\nabla_F) \in \LNM_{X,\prod_{i=1}^r\Sigma}$ and $\xi \in 
\prod_{i=r+1}^{r+n}(\Sigma_i \cap K)$, where 
$\pi_1: X \times A_K^n(I) \lra X$, $\pi_2:X \times A_K^n(I) \lra A_K^n(I)$ 
denote the projections. \\
$(2)$ \,\,\, 
An object $(E,\nabla)$ in 
$\LNM_{X \times A_K^n(I),\Sigma}$ is called $\Sigma$-unipotent if 
$(E,\nabla)$ admits a filtration 
$$ 0 = E_0 \subset E_1 \subset \cdots \subset E_m=E $$ 
by sub log-$\nabla$-modules whose successive quotients are 
$\Sigma$-constant log-$\nabla$-modules. \\
$(3)$ \,\,\, An object $(E,\nabla)$ in $\LNM_{X \times A^n_K(I),\Sigma}$ 
is called potentially $\Sigma$-constant $($resp. potentially 
$\Sigma$-unipotent$)$ if it is $\Sigma$-constant $($resp. 
$\Sigma$-unipotent$)$ after some finite extension of the field $K$. \\

We denote the category of $\Sigma$-unipotent $($resp. potentially 
$\Sigma$-unipotent$)$ log-$\nabla$-modules on 
$X \times A_K^n(I)$ with respect to $x_1, ..., x_r, t_1, ..., t_n$ by 
$\ULNM_{X \times A^n_K(I),\Sigma}$ $($resp. 
$\ULNM'_{X \times A^n_K(I),\Sigma})$. 
\end{defn}

\begin{rem}\label{bar}
Let the notation be as above and assume that $r=0$ and that $I$ does not 
contain $0$. In this case, it is easy to see that the log-$\nabla$-modules 
$M_{\xi}$ and $M_{\xi'}$ $(\xi,\xi' \in \Sigma)$ are isomorphic if 
$\xi-\xi'$ is contained in $\Z^n$. From this fact, it is easy to see that 
the notion of (potential) $\Sigma$-unipotence only depends on the image 
$\ol{\Sigma}$ of $\Sigma$ in $(\ol{K}/\Z)^n$ in the following sense: 
An object $(E,\nabla)$ in 
$\LNM_{X \times A_K^n(I),\Sigma}$ is (potentially) 
$\Sigma$-unipotent if and only if 
it is $\tau(\ol{\Sigma})$-unipotent for some (or any) section 
$\tau: (\ol{K}/\Z)^n \lra \ol{K}^n$ of the canonical projection 
$\ol{K}^n \lra (\ol{K}/\Z)^n$. So, in this case, we will say also that 
$(E,\nabla)$ is (potentially) 
$\ol{\Sigma}$-unipotent, by abuse of terminology. 
\end{rem}

Following \cite[3.2.6]{kedlayaI}, we introduce an important functor 
$\cU_I$: 

\begin{defn} 
Let $X,I,\Sigma$ be as in Definition \ref{unipdef}. 
Then we define the functor 
$\cU_I: \allowbreak \LNM\allowbreak {}_{X\times A^n_K[0,0],\Sigma} \allowbreak \lra 
\LNM_{X\times A^n_K(I),\Sigma}$ as follows$:$ 
For an object in $\LNM_{X\times A^n_K[0,0],\Sigma}$ regarded as 
$((E,\nabla), \allowbreak \{\pa_i\}_{i=1}^n)$ $((E,\nabla) \in 
\LNM_{X,\prod_{i=1}^r\Sigma_i})$ is sent by $\cU_I$ to 
$\pi^*E$ $($where $\pi:X \times A^n_K(I) \lra X$ denotes the projection$)$ 
endowed with the connection 
$$ \vv \mapsto \pi^*\nabla(\vv) + \sum_{i=1}^n\pi^*(\pa_i)(\vv) \dlog t_i. $$
\end{defn}

We have the following remark, which is parallel to \cite[3.2.7]{kedlayaI}: 

\begin{rem}\label{3.2.7}
Let the notation be as above. 
\begin{enumerate}
\item 
In the case $X= \Spm K$ ($r=0$) and $I=[0,0]$, a log-$\nabla$-module on 
$X \times A^n_K(I)$ is nothing but a finite dimensional $K$-vector 
space $E$ endowed with commuting endomorphisms $\pa_i$ ($1 \leq i \leq n$). 
It is (potentially) $\Sigma$-constant 
if and only if each $\pa_i$ is equal to the 
multiplication by $\xi_i$ for some $\xi_i \in \Sigma_i \cap K$. 
It is $\Sigma$-unipotent (resp. potentially $\Sigma$-unipotent) 
if and only if each $\pa_i$ has eigenvalues in $\Sigma_i \cap K$ (resp. 
$\Sigma_i$). So, in this case, 
$\Sigma$-constance is equivalent to potential $\Sigma$-constance and 
$\Sigma$-unipotence (resp. 
potential $\Sigma$-unipotence) is equivalent to being contained 
in the category $\LNM_{X \times A^n_K(I),\Sigma \cap K^n}$ (resp. 
$\LNM_{X \times A^n_K(I),\Sigma}$). 
\item 
From the remark (1), we easily see that, in the case $X=\Spm K$ $(r=0)$ 
and $I$ arbitrary, an object in 
the image of $\cU_I$ is potentially $\Sigma$-unipotent and it is 
$\Sigma$-unipotent if $\Sigma \subseteq K^n$. 
\end{enumerate}
\end{rem}

Before investigating the properties of log-$\nabla$-modules with 
exponents in $\Sigma$ and (potentially) 
$\Sigma$-unipotent log-$\nabla$-modules, 
we consider several conditions on the set $\Sigma$. To do this, first 
we recall the notion of the type of a number in $\ol{K}$ and 
that of $p$-adic non-Liouvilleness: 

\begin{defn} $(1)$ \,\,\, 
For $a \in \ol{K}$, define the type $\type(a) \in [0,1]$ of $a$ by 
the radius of convergence of the series 
$\dsum_{s \in \N,\not=a} \dfrac{X^s}{a-s}.$ \\
$(2)$ \,\,\,  
$a \in \ol{K}$ is called $p$-adically non-Liouville if 
$\type(a)=\type(-a)=1$ holds. 
\end{defn} 

\begin{defn} 
A set $\Sigma \subseteq \ol{K}$ is called $\NID$ $($resp. $\PTD$, 
$\NLD)$ if, for any $\alpha, \beta \in \Sigma$, $\alpha - \beta$ is not 
a non-zero integer $($resp. the type of $\alpha - \beta$ is positive, 
$\alpha -\beta$ is $p$-adically non-Liouville$)$. A set 
$\Sigma := \prod_{i=1}^r\Sigma_i \subseteq \ol{K}^r$ is called 
$\NID$ $($resp. $\PTD$, $\NLD)$ if so is $\Sigma_i$ for all 
$1 \leq i \leq r$. \par 
We say a set $\ol{\Sigma} := \prod_{i=1}^r \ol{\Sigma}_i 
\subseteq (\ol{K}/\Z)^r$ is $\PTD$ 
$($resp. $\NLD)$ if, for some $($or any$)$ section $\tau: (\ol{K}/\Z)^r 
\lra \ol{K}^r$ of the canonical projection, $\tau(\ol{\Sigma})$ is $\PTD$ 
$($resp. $\NLD)$. Note that $\tau(\ol{\Sigma})$ is automatically 
$\NID$. 
\end{defn}

Now we prove the analogue of \cite[3.2.8]{kedlayaI}, which is a starting point 
of the study of log-$\nabla$-modules with exponents in $\Sigma$: 

\begin{lem}\label{3.2.8}
Let $X$ be a smooth rigid space endowed with  
$x_1, ..., x_r \allowbreak \in \Gamma(X,\cO_X)$ whose zero loci are smooth and 
meet transversally. Let $\Sigma := \prod_{i=1}^{r+s+1}\Sigma_i$ be a 
subset of $\ol{K}^{r+s}$ such that $\Sigma_{r+1}$ is $\NID$ and 
$\PTD$, let $a \in (0,\infty) \cap \Gamma^*$ and let $(E,\nabla)$ be an object 
in $\LNM_{X \times A^1_K[0,a] \times A^s_K[0,0],\Sigma}$ 
such that $E |_{X \times \{0\}}$ is free. 
Then there exists an element $b \in (0,a] \cap \Gamma^*$ such that 
the restriction of $E$ to 
$X \times A^1_K[0,b] \times A^s_K[0,0]$ is in the essential image 
of $\cU'_{[0,b]}: \LNM_{X\times A^1_K[0,0] \times A^s_K[0,0],\Sigma} \lra 
\LNM_{X \times A^1_K[0,b] \times A^s_K[0,0],\Sigma}$, where $\cU'_{[0,b]}$ 
is the functor naturally induced by 
$\cU_{[0,b]}: \LNM_{X\times A^1_K[0,0],\prod_{i=1}^{r+1} \Sigma_i} \lra 
\LNM_{X \times A^1_K[0,b],\prod_{i=1}^{r+1}\Sigma_i}$. 
\end{lem}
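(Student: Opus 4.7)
The idea is to construct, on a sufficiently small sub-disc $X \times A^1_K[0,b] \times A^s_K[0,0]$, a gauge transformation that brings the log-$\nabla$-module structure on $E$ into the canonical shape cut out by $\cU'_{[0,b]}$.

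I would begin by using the freeness of $E|_{X \times \{0\}}$ (combined with coherence of $E$ and a Nakayama-type extension argument, possibly after shrinking $a$) to lift a chosen basis of $E|_{X \times \{0\}}$ to a basis of $E$ on the whole disc $X \times A^1_K[0,a] \times A^s_K[0,0]$. In this basis the operator $t\,\partial/\partial t$ attached to the $\dlog t$-direction is represented by a matrix $N \in \Mat_m(\cO)$ whose restriction $N_0 := N|_{t=0}$ is the residue of $\nabla$ along $\{t=0\}$; by hypothesis, $N_0$ has its eigenvalues in $\Sigma_{r+1}$.

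Next, I would seek an invertible gauge transformation $U = I + \sum_{k \geq 1} U_k t^k$ such that in the basis $U \cdot (\text{old basis})$ the operator $t\,\partial/\partial t$ acts by the constant matrix $N_0$. Expanding $N = \sum_k N_k t^k$ and comparing coefficients of $t^k$ in the equation $t\,\partial_t U = U N_0 - NU$ yields, for $k \geq 1$, the Sylvester-type relation
$$
S_k(U_k) \;=\; \sum_{i=1}^{k} N_i\,U_{k-i}, \qquad S_k(X) := X N_0 - (N_0 + k I)\,X.
$$
The eigenvalues of $S_k$ on the space of matrices are the quantities $\lambda_j - \lambda_i - k$ with $\lambda_i,\lambda_j$ ranging over eigenvalues of $N_0$. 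The $\NID$ hypothesis on $\Sigma_{r+1}$ guarantees that none of them vanishes for $k \geq 1$, so each $S_k$ is invertible and $U_k$ is uniquely determined. Convergence of the resulting formal series is then controlled by $\PTD$: each non-zero difference $\lambda_j - \lambda_i$ has positive type $\tau_{ij} > 0$, giving a bound on $|S_k^{-1}|$ that grows at worst like $\tau^{-k}$ for $k$ large (with $\tau$ the minimum of the $\tau_{ij}$), while the diagonal terms $\lambda_i - \lambda_i = 0$ contribute only polynomial growth. Balancing this against the bound $|N_k| \leq C_1 a^{-k}$ coming from the analyticity of $N$ on $A^1_K[0,a]$, a routine asymptotic analysis of the recursion shows that $\limsup |U_k|^{1/k}$ is at most $1/(a\tau)$; in particular $U$ is a unit of $\Mat_m(\cO_{X \times A^1_K[0,b] \times A^s_K[0,0]})$ for any $b \in (0, a\tau) \cap \Gamma^*$.

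To conclude, observe that once $N$ has been normalised to the constant matrix $N_0$ via $U$, integrability of $\nabla$ against any other direction (an $X$-direction $\pa_\alpha$ or a $\dlog t_i$-direction) gives, for the associated connection matrix $M = \sum_k M_k t^k$, the identity $t\,\partial_t M = \pa_\alpha N_0 + [M, N_0]$. The $t^0$-coefficient is precisely the integrability for the log-$\nabla$-module structure on $E|_{X \times \{0\} \times A^s_K[0,0]}$ and is automatic, while for $k \geq 1$ the equation reads $S_k(M_k) = 0$, forcing $M_k = 0$ by the same invertibility argument. Hence every connection matrix is independent of $t$, which is exactly the statement that $E|_{X \times A^1_K[0,b] \times A^s_K[0,0]}$ is isomorphic to the image under $\cU'_{[0,b]}$ of the object of $\LNM_{X \times A^1_K[0,0] \times A^s_K[0,0], \Sigma}$ obtained from $E|_{X \times \{0\} \times A^s_K[0,0]}$ by adjoining the endomorphism $N_0$. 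The main obstacle is the convergence estimate for $U$: the $\PTD$ hypothesis gives only positive, possibly small, type, so $|S_k^{-1}|$ can grow exponentially in $k$, which is precisely why one must shrink $a$ to a smaller $b$ and why the delicate balancing of type against the decay of $N$ takes place at this step.
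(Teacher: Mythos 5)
Your overall strategy matches the paper's: lift a basis from $t=0$, solve the gauge equation $\pa U + NU = UN_0$ term-by-term (your $U$ is the paper's matrix $M$), use $\NID$ to invert each Sylvester operator $S_k$, use $\PTD$ to make the formal series converge after shrinking the disc, and then use integrability to force the remaining connection matrices to be $t$-independent. The last step is presented a little differently in the paper, which instead characterizes the $\cO_X$-span $F$ of the new basis as $\ker P(\pa)$ and reads off the log-$\nabla$-module structure from that, but the two arguments are equivalent, and your direct computation with $S_k(M_k)=0$ for $k\ge 1$ is fine.

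The genuine gap is in the convergence estimate, which is the heart of the lemma. You claim that $\PTD$ gives a bound on $|S_k^{-1}|$ growing ``at worst like $\tau^{-k}$,'' and that a ``routine asymptotic analysis of the recursion'' then yields $\limsup|U_k|^{1/k}\le (a\tau)^{-1}$. That inference does not go through. When you unroll the recursion $|U_k|\le |S_k^{-1}|\max_i|N_i||U_{k-i}|$, you accumulate a product $\prod_{\ell}|S_{k_\ell}^{-1}|$ along a strictly decreasing chain $k=k_1>k_2>\cdots>0$, which in the worst case runs through all of $\{1,\ldots,k\}$. Feeding in only the individual geometric bound $|S_j^{-1}|\le C(\tau')^{-j}$ gives $\prod_{j\le k}|S_j^{-1}|\le C^k(\tau')^{-k(k+1)/2}$, which is super-exponential and hence useless. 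What $\PTD$ actually delivers, and what the proof must exploit, is a geometric bound on the \emph{cumulative} product: putting $A_i := \prod_{j=1}^i\max\bigl(\max_{k,l}|(\xi_l-\xi_k+j)^{-1}|,1\bigr)$, the identity
$$\dsum_{s=0}^\infty \dfrac{X^s}{\prod_{i=1}^s(i-\alpha)} \;=\; \alpha\,e^X\dsum_{s=0}^\infty\dfrac{(-X)^s}{s!}\cdot\dfrac{1}{\alpha-s}$$
from \cite[VI 1.2]{dgs} shows that the left-hand side has positive radius of convergence whenever $\type(\alpha)>0$, whence $A_i\le\rho^{-i}$ for some $\rho>0$. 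The paper's inductive estimate $|M_i|\le A_i^{\,e}\,C^{2ei}a^{-i}$ (with $e$ the nilpotency index coming from the non-semisimple part of $N_0$, tracked through the Neumann series for $(h_{1,i}+h_2)^{-1}$) is built around this $A_i$, not around a bound on a single $|S_k^{-1}|$. Your parenthetical that the diagonal terms ``contribute only polynomial growth'' is also off: they contribute $|1/k|$, whose running product is $|1/k!|\sim p^{k/(p-1)}$ --- exponential, not polynomial, though still geometrically controlled by the same DGS lemma. So the route you chose can be completed, but only after replacing the individual operator bound by the cumulative-product bound, which is the actual content of the $\PTD$ hypothesis here; as written, the estimate is incorrect and the radius $a\tau$ is not justified.
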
 

\begin{proof} 
We give a proof, following \cite[3.2.8]{kedlayaI}. 
For a commutative ring $A$ and $n \in \N$, we denote the set of $n \times n$ 
matrices with entries in $A$ by $\Mat_n(A)$. Also, we denote the 
coodinate of $A^1_K[0,a]$ by $t$ and denote the differential operator 
$t\dfrac{\pa}{\pa t}$ simply by $\pa$. \par 
Let us take a basis $\e_1, ..., \e_n$ of $E|_{X \times \{0\}}$. 
Then, as in the proof of \cite[3.2.8]{kedlayaI}, we see that 
a lift of this basis (which we denote again by $\e_1, ..., \e_n$) 
forms a basis of $E$ on $X \times A^1_K[0,a]$ if we replace $a$ by a 
smaller value. Define the matrix $N = (N_{jl}) \in 
\Mat_n(\cO(X \times A^1_K[0,a]))$ by 
$\pa(\e_l) = \sum_{j=1}^nN_{jl}\e_j$ and write 
$N = \sum_{i=0}^{\infty}N_it^i$, $N_i \in \Mat_n(\cO(X))$. \par 
First we prove the lemma in the case $s=0$. 
The key claim for the proof is the following: \\
\quad \\
{\bf claim.} \\ 
(1) \,\, 
There exists a unique matrix $M \in \Mat_n(\cO(X)[[t]])$ congruent to 
identity modulo $t$ satisfying $NM + \pa M = MN_0$. \\
(2) \,\, The matrix $M$ in (1) converges on $X \times A^1_K[0,b]$ for some 
$b \in (0,a] \cap \Gamma^*$. \\
\quad \\
Let us prove the assertion (1) of the claim. If we write 
$M = \sum_{i=0}^{\infty}M_it^i$ with $M_0$ identity, we should have 
\begin{equation}\label{nmmn}
iM_i + N_0M_i - M_iN_0 = -\sum_{j=0}^{i-1}N_{i-j}M_j 
\end{equation} 
for $i>0$. 
Let $g: \Mat_n(\cO(X)) \lra \Mat_n(\cO(X))$ be the $\cO(X)$-linear map 
defined by $g(Y) = N_0Y-YN_0$. If we know that $g + i\cdot \id$ is invertible
 for $i>0$, we can uniquely determine $M_i$'s $(i\geq 0)$ inductively 
by the equation \eqref{nmmn}. So it suffice to prove that 
$g + i\cdot \id$ $(i>0)$ is invertible. To prove this, we may enlarge 
$K$. \par 
Since $N_0$ is the residue of $E$ along $\{t=0\}$, there exists a polynomial 
$P(x) \in K[x]$ of the form $P(x) = \prod_{i=1}^m(x-\xi_i)^{n_i}$ with 
$\xi_i \in \Sigma_{r+1}$ such that $P(N_0)=0$ holds. By enlarging $K$, we 
may assume that $\xi_i$'s belong to $K$. Then we have a decomposition 
$$\cO(X)^n = E|_{X \times \{0\}} = \bigoplus_{k=1}^mE_k$$ 
such that 
$N_0 - \xi_k \cdot \id$ acts on $E_k$ nilpotently. According to this 
decomposition, we have the decomposition 
$$ \Mat_n(\cO(X)) = \End_{\cO(X)}(E|_{X \times \{0\}}) = 
\bigoplus_{k,l=1}^m \Hom_{\cO(X)}(E_k,E_l), $$ 
and on each $\Hom_{\cO(X)}(E_k,E_l)$, $g$ is defined by 
$g(Y) = (N_0-\xi_l\cdot \id)(Y) - Y(N_0-\xi_k\cdot \id) + 
(\xi_l-\xi_k)Y$. Hence 
$g-(\xi_l-\xi_k)\cdot \id = (g+i\cdot\id)- (\xi_l-\xi_k-i)$ acts nilpotently 
on $\Hom_{\cO(X)}(E_k,E_l)$. Since $\Sigma_{r+1}$ is $\NID$, 
$\xi_l-\xi_k-i$ is non-zero for any $k,l$. It implies that $g+i\cdot\id$ is 
invertible. So we have proved the assertion (1). \par 
Let us prove the assertion (2) of the claim. Since $N$ is in 
$\Mat_n(\cO(X \times A^1_K[0,a]))$, there exists a constant $C>1$ satisfying 
$|N_i| \leq Ca^{-i}$. (Here $|\phantom{a}|$ denotes any fixed Banach norm on 
$X$ andd $|\phantom{a}|$ for a matrix is defined as the 
maximum of the norms of its entries.) Let us define $\cO(X)$-linear maps 
$h_{1,i}, h_2: \Mat_n(\cO(X)) \lra \Mat_n(\cO(X))$ $(i>0)$ by the composite 
maps 
$$ \Mat_n(\cO(X)) = \bigoplus_{k,l=1}^m \Hom_{\cO(X)}(E_k,E_l) 
\lra \bigoplus_{k,l=1}^m \Hom_{\cO(X)}(E_k,E_l) = \Mat_n(\cO(X)), $$ 
where the arrow in the middle is defined by 
$(Y_{kl})_{k,l} \mapsto ((\xi_l-\xi_k+i)Y_{kl})_{k,l}$ in the case of 
$h_{1,i}$ and 
$(Y_{kl})_{k,l} \mapsto ((g-(\xi_l-\xi_k)\id)(Y_{kl}))_{k,l}$ in the case of 
$h_2$. Then we have $g+i\cdot\id = h_{1,i}+h_2$ and $h_{1,i}$ (resp. $h_2$) is 
invertible (resp. nilpotent).
Let us take a natural number $e$ satisfying $h_2^e = 0$. Then the 
inverse of the map $g+i\cdot\id$ is given by 
$(g+i\cdot\id)^{-1} = \sum_{j=0}^{e-1}(-1)^j(h_{1,i}^{-1})^{j+1}h_2^j$. \par 
Let us replace the constant $C$ by a larger 
value so that it satisfies the inequalities 
$|h_{1,i}^{-1}| \leq C \max_{k,l}|(\xi_l-\xi_k+i)^{-1}|, 
|h_2| \leq C$ and let us put 
$$ A_i := \prod_{j=1}^i\max(\max_{k,l}|(\xi_l-\xi_k+j)^{-1}|, 1). $$
With these notation, we prove the inequality $|M_i| \leq A_i^eC^{2ei}a^{-i}$ 
by induction on $i$. It is obviously true for $i=0$. If the inequality 
is true for $i-1$, we have 
\begin{align*}
|(g+i\cdot\id)(M_i)| & = 
|\sum_{j=0}^{i-1}N_{i-j}M_j| \\ 
& \leq 
\max_{0\leq j \leq i-1} (Ca^{-i+j}\cdot A_j^eC^{2ej}a^{-j}) 
\leq A_{i-1}^eC^{2e(i-1)+1}a^{-i}. 
\end{align*}
Hence we have 
\begin{align*}
|M_i| & = 
|\sum_{j=0}^{e-1}(-1)^j(h_{1,i}^{-1})^{j+1}h_2^j((g+i\cdot\id)(M_i))| \\ 
& \leq 
\max_{0 \leq j \leq e-1}
(C^{j+1}(\max_{k,l}|(\xi_l-\xi_k+i)^{-1}|)^{j+1}C^{j}) \cdot 
A_{i-1}^eC^{2e(i-1)+1}a^{-i} \\ 
& \leq 
A_i^{e}C^{2ei}a^{-i}, 
\end{align*}
and so the desired inequality is proved. \par 
By \cite[VI, Lemma 1.2]{dgs}, we have an equality of the series 
$$ 
\sum_{s=0}^{\infty}\dfrac{X^s}{\prod_{i=1}^s(i-\alpha)} = 
\alpha e^{X} \sum_{s=0}^{\infty} \dfrac{(-X)^s}{s!}\cdot 
\dfrac{1}{\alpha-s}
$$ 
for $\alpha \in \ol{K}-\N$. From this, we easily see that the radius of 
convergence of the left hand side is positive if $\type(\alpha)$ is positive. 
Putting $\alpha = \xi_k-\xi_l$ (which is in $\ol{K}-\N$ with positive type 
since $\Sigma_{r+1}$ is $\NID$ and $\PTD$), we see that, for some $\rho>0$, 
the inequality $A_i < \rho^{-i}$ holds for all $i>0$. Then, if we take 
$b \in (0,a]\cap\Gamma^*$ to be a number satisfying $0<b<\rho^eC^{-2e}a$, 
we have $|M_i| < b^{-i}$ for all $i>0$. Hence $M$ converges on 
$X \times A^1_K[0,b]$ and so the assertion (2) of the claim is proved. \par 
By the claim, the matrix $M = (M_{jl})$ is an invertible matrix in 
$\Mat_n(\cO(X \times A^1_K[0,b]))$. Define the vectors 
$\vv_1, ..., \vv_n \in \Gamma(X \times A^1_K[0,b],E)$ by 
$\vv_l=\sum_j M_{jl}\e_j$. Then $\vv_1, ...,\vv_n$ forms a basis of $E$ on 
$X \times A^1_K[0,b]$ and the operator $\pa$ acts on this basis via the 
matrix $N_0$. Also, by the formal power series computation, we see that 
the $\cO_X$-span of this basis (which we denote by $F$) 
is characterized as the subspace of $E$ 
on which $P(\pa)$ acts as zero. Hence the restriction of $(E,\nabla)$ to 
$F$ naturally defines a log-$\nabla$-module on $X$ whose exponents along 
$\{x_i=0\}$ are in $\Sigma_i$ $(1\leq i \leq r)$, 
endowed with the operator $\pa=N_0$. That is, $F$ defines an object in 
$\LNM_{X \times A^1_K[0,0],\Sigma}$ and it is easy to see that 
$E = \cU_{[0,b]}(F)$ holds. 
Hence the proof of the lemma is finished in the case $s=0$. \par 
Next let us treat the general case. Let us rewrite $(E,\nabla)$ as 
$((E,\nabla'),\{\pa_i\}_{i=1}^s)$, where $(E,\nabla') \in 
\LNM_{X \times A^1_K[0,a],\prod_{i=1}^{r+1}\Sigma_i}$ and 
$\pa_i \,(1 \leq i \leq s)$ are commuting endomorphisms on $(E,\nabla')$ 
with eigenvalues in $\Sigma_{r+i+1}$. Then we can apply the argument 
up to the previous paragraph to $(E,\nabla')$: So let us take 
$\pa, b, N_0$, the basis $\vv_1,...,\vv_n \in 
\Gamma(X \times A^1_K[0,b],E)$ and 
$F$ as above, and let $L_i = \sum_{j=0}^{\infty}L_{ij}t^j \,(1 \leq i \leq s)$ 
be the matrix expression of $\pa_i$ with respect to the basis 
$\vv_1,...,\vv_n$. Since $\pa_i$'s are compatible with $\pa$, 
we have equalities $N_0L_i + \pa L_i = L_iN_0 \,(1 \leq i \leq s)$, that is, 
the equalities 
$$ jL_{ij} + N_0 L_{ij} - L_{ij} N_0 = O \,\,\,\, (1 \leq i \leq s, j \geq 0). 
$$ 
As we see above, the map $Y \mapsto jY + N_0Y - YN_0$ is invertible 
for any positive integer $j$. Hence $L_{ij} = O$ for $j>0$, that is, $\pa_i$'s 
 come from endomorphisms of $F$. So $((F,\nabla'|_F), \{\pa_i|_F\}_{i=1}^s)$ 
naturally defines an object in 
$\LNM_{X \times A^1_K[0,0] \times A^s_K[0,0],\Sigma}$ which is sent 
to $((E,\nabla'),\{\pa_i\}_{i=1}^s) = (E,\nabla)$ by $\cU'_{[0,b]}$. 
So the proof in general case is also finished. 
\end{proof}

Using Lemma \ref{3.2.8}, we obtain the following 
(cf. \cite[3.2.12]{kedlayaI}): 

\begin{lem}\label{3.2.12} 
Let $X$ be a smooth rigid space endowed with $x_1,...,x_r \in \Gamma(X,\cO_X)$ 
whose zero loci are smooth and meet transversally. 
Let $a \in (0,\infty)\cap\Gamma^*$, let 
$\Sigma= \prod_{i=1}^{r+n+m}\Sigma_i$ be a subset of $\ol{K}^{r+n+m}$ 
such that $\prod_{i=r+1}^{r+n+m}\Sigma_i$ is 
$\NID$ and $\PTD$ and let $(E,\nabla)$ be an object in 
$\LNM_{X \times A^n_K[0,a] \times A^m_K[0,0],\Sigma}$ 
such that $E|_{X \times \{0\}}$ is free. 
Then, there exists 
$b \in (0,a]\cap\Gamma^*$ such that the restriction of 
$(E,\nabla)$ to $X \times A^n_K[0,b] \times A^m_K[0,0]$ is in the essential 
image of $\cU'_{[0,b]}: \LNM_{X \times A_K^{n+m}[0,0],\Sigma} \lra 
\LNM_{X \times A_K^n[0,b] \times A^m_K[0,0],\Sigma}$, where $\cU'_{[0,b]}$ 
is the functor naturally induced by 
$\cU_{[0,b]}: \LNM_{X \times A_K^{n}[0,0],\prod_{i=1}^{r+n}\Sigma_i} \lra 
\LNM_{X \times A_K^n[0,b],\prod_{i=1}^{r+n}\Sigma_i}$. 
\end{lem}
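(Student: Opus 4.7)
The plan is to prove the lemma by induction on $n$, using Lemma \ref{3.2.8} to peel off one $t_i$-coordinate at a time. The base case $n=1$ is exactly Lemma \ref{3.2.8} with its $s$ taken to be $m$.

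For the inductive step, suppose the lemma is known for $n-1$. Given $(E,\nabla) \in \LNM_{X \times A^n_K[0,a] \times A^m_K[0,0],\Sigma}$ with $E|_{X \times \{0\}}$ free, I would first apply Lemma \ref{3.2.8} with the base smooth rigid space taken to be $X' := X \times A^{n-1}_K[0,a]$ (equipped with the $r+n-1$ functions $x_1,\dots,x_r,t_1,\dots,t_{n-1}$, whose zero loci are smooth and meet transversally), with the distinguished $A^1_K[0,a]$-factor taken to be the last coordinate $t_n$, and with the zero-radius factor taken to be $A^m_K[0,0]$. Under this reparsing, the indexing of $\Sigma$ matches the convention of Lemma \ref{3.2.8} without any reordering, and its distinguished component $\Sigma_{r+n}$ is $\NID$ and $\PTD$ by hypothesis. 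This yields $b_n \in (0,a] \cap \Gamma^*$ and an object $F_1 \in \LNM_{X' \times A^1_K[0,0] \times A^m_K[0,0],\Sigma} = \LNM_{X \times A^{n-1}_K[0,a] \times A^{m+1}_K[0,0],\Sigma}$ satisfying $(E,\nabla)|_{X' \times A^1_K[0,b_n] \times A^m_K[0,0]} = \cU'_{[0,b_n]}(F_1)$.

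Next, viewing $F_1$ as an object of $\LNM_{X \times A^{n-1}_K[0,a] \times A^{m+1}_K[0,0],\Sigma}$, I would apply the inductive hypothesis (with $n-1$ positive-radius and $m+1$ zero-radius coordinates). The freeness of $F_1|_{X \times \{0\}}$ is inherited from the basis $\vv_l$ produced in the proof of Lemma \ref{3.2.8}, which trivializes $F_1$ throughout $X'$; the $\NID$/$\PTD$ condition on $\Sigma$ carries over unchanged. This produces $b' \in (0,a] \cap \Gamma^*$ and $F \in \LNM_{X \times A^{n+m}_K[0,0],\Sigma}$ with $F_1|_{X \times A^{n-1}_K[0,b'] \times A^{m+1}_K[0,0]} = \cU'_{[0,b']}(F)$. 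Setting $b := \min(b_n,b')$ and combining the two identities yields $(E,\nabla)|_{X \times A^n_K[0,b] \times A^m_K[0,0]} = \cU'_{[0,b]}(F)$, once one notes that $\cU'$ commutes with restriction in coordinates it does not touch and that the iterated peel-off composes into the single $n$-fold $\cU'_{[0,b]}$ of the statement (this is immediate from the explicit formula $\vv \mapsto \pi^*\nabla(\vv) + \sum_i \pi^*(\pa_i)(\vv)\dlog t_i$ in the definition of $\cU$).

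The main obstacle is purely bookkeeping: organizing the indexing of $\Sigma$ correctly under the reparsing, verifying that the freeness hypothesis propagates at each step, and checking that the composite of the two $\cU'$'s (one coordinate, then $n-1$ more) is genuinely the $n$-fold functor of the conclusion. All the analytic content — the convergence estimate for the gauge-transformation matrix $M$ governing the trivialization — is already contained in Lemma \ref{3.2.8}, so no new estimates need to be produced.
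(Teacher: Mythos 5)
Your overall strategy (induction on $n$, peeling one annulus coordinate at a time via Lemma \ref{3.2.8}) is the paper's, but the order of the two steps is reversed, and this creates a gap in the freeness hypothesis. You propose to apply Lemma \ref{3.2.8} first, with base $X' := X \times A^{n-1}_K[0,a]$ and distinguished coordinate $t_n$. But Lemma \ref{3.2.8} requires that $E|_{X' \times \{0\}}$ be \emph{free}, i.e.\ that $E$ restricted to $X \times A^{n-1}_K[0,a] \times \{t_n=0\} \times A^m_K[0,0]$ be a free module. The hypothesis of Lemma \ref{3.2.12} only grants freeness of $E|_{X \times \{0\}}$, i.e.\ on the full origin $\{t_1=\cdots=t_n=0\}$, which is a much smaller closed subspace. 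You address freeness for the \emph{second} step (the one needed to invoke the inductive hypothesis), but never establish the freeness needed to launch the first application of Lemma \ref{3.2.8}. A log-$\nabla$-module is locally free by definition, so the restriction to $\{t_n=0\}$ is locally free, but it need not be globally free over $X \times A^{n-1}_K[0,a]$.

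The paper resolves this by reversing the order: first form $i^*(E,\nabla)$, the restriction to $\{t_n=0\}$, which is an object over $X \times A^{n-1}_K[0,a] \times A^{m+1}_K[0,0]$ to which the inductive hypothesis (for $n-1$) \emph{does} apply, since $(i^*E)|_{X\times\{0\}} = E|_{X\times\{0\}}$ is free by assumption. The induction hypothesis exhibits $i^*(E,\nabla) = \cU'_{[0,b]}(E',\nabla')$ for some $E' \in \LNM_{X\times A^{n+m}_K[0,0],\Sigma}$, and then the chain of identifications $E|_{X\times\{0\}} = (i^*E)|_{X\times\{0\}} = (\pi^*E')|_{X\times\{0\}} = E'$ shows $E'$ is free, hence $i^*E|_{X\times A^{n-1}_K[0,b]\times A^{m+1}_K[0,0]} = \pi^*E'$ is free. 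Only at this point is the freeness hypothesis of Lemma \ref{3.2.8} available for the base $X \times A^{n-1}_K[0,b]$, and the paper applies it, followed by one more use of the inductive hypothesis. If you want to keep your order, you would need to first shrink $a$ and independently prove that $E|_{X\times A^{n-1}_K[0,a]\times\{0\}}$ becomes free; the paper's ordering sidesteps this by making the needed freeness an output of the inductive hypothesis rather than an input.
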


\begin{proof}
We prove the lemma by induction on $n$. Let 
$i: X \times A_K^{n-1}[0,a] \times A^{m+1}_K[0,0] \hra 
X \times A^n_K[0,a] \times A^{m}_K[0,0]$ be the zero locus of the $n$-th 
coordinate function of $A^n_K[0,a]$. 
Then, by induction hypothesis, $i^*(E,\nabla)$ 
is in the essential image of 
$$ \cU'_{[0,b]}: \LNM_{X \times A_K^{n+m}[0,0],\Sigma} \lra 
\LNM_{X \times A_K^{n-1}[0,b] \times A^{m+1}_K[0,0],\Sigma} $$ 
for some $b$. Then, if we write 
$i^*(E,\nabla) = \cU'_{[0,b]}((E',\nabla'))$, we have the isomorphism 
$$ E|_{X \times \{0\}} = (i^*E) |_{X \times \{0\}} = 
(\pi^*E')|_{X \times \{0\}} = E',$$
where $\pi: X \times A_K^{n-1}[0,b] \lra X$ denotes the projection. 
Hence $E'$ is free and so $i^*E = \pi^*E'$ is also free. 
Hence we may apply Lemma \ref{3.2.8} and we see that 
the restriction of $(E,\nabla)$ to $X \times A^n_K[0,b] \times A^m_K[0,0]$ 
is in the essential image of 
$$ \cU'_{[0,b]}: \LNM_{X \times 
A_K^{n-1}[0,b] \times A^{m+1}_K[0,0],\Sigma} \lra 
\LNM_{X \times A_K^{n}[0,b] \times A^{m}_K[0,0],\Sigma} $$ 
for some smaller $b$. Then, by using the induction hypothesis again, 
we see that 
the restriction of $(E,\nabla)$ to $X \times A^n_K[0,b] \times A^m_K[0,0]$ 
is in the essential image of 
$$ \cU'_{[0,b]}: \LNM_{X \times A_K^{n+m}[0,0],\Sigma} \lra 
\LNM_{X \times A_K^n[0,b] \times A^m_K[0,0],\Sigma}$$
for some even smaller $b$, as desired. 
\end{proof}

Using Lemma \ref{3.2.12}, we can prove the following proposition, which is 
the analogue of \cite[3.2.14]{kedlayaI}: 

\begin{prop}\label{3.2.14}
Let $X$ be a smooth rigid space endowed with $x_1, ..., x_r \in 
\Gamma(X,\cO_X)$ whose zero loci are smooth and meet transversally. 
Let $\Sigma = \prod_{i=1}^r\Sigma_i$ be a subset of $\ol{K}^r$ which is 
$\NID$ and $\PTD$. Then, for a morphism 
$f:(E,\nabla_E) \lra (F,\nabla_F)$ 
in $\LNM_{X,\Sigma}$, $\Ker(f)$ and $\Coker(f)$ endowed with 
canonical connections are also in the category 
$\LNM_{X,\Sigma}$. In particular, $\LNM_{X,\Sigma}$ is an abelian 
category. 
\end{prop}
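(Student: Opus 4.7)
The plan is to reduce to the situation of Lemma~\ref{3.2.12} by working locally on $X$, analyze the kernel and cokernel of $f$ via Berthelot's theorem that coherent modules with connection on a smooth rigid space are locally free, and transport the result back. Since local freeness and the condition that exponents lie in $\Sigma$ are local properties on $X$, it suffices to prove the statement in a neighborhood of an arbitrary point $x \in X$.

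First I would set up local coordinates. After relabeling I may assume $x_1, \ldots, x_s$ vanish at $x$ while $x_{s+1}, \ldots, x_r$ are units at $x$; shrinking $X$ makes the latter invertible and eliminates the corresponding $D_i$ from consideration. Because the divisors $\{x_i = 0\}$ for $1 \leq i \leq s$ are smooth and transverse at $x$, after further shrinking I may identify a neighborhood of $x$ with $Y \times A^s_K[0,a]$ for some smooth affinoid $Y$ and some $a \in (0,\infty)\cap\Gamma^*$, with $x_i$ corresponding to the $i$-th coordinate of $A^s_K[0,a]$, and I may shrink $Y$ to arrange that $E|_{Y\times\{0\}}$ and $F|_{Y\times\{0\}}$ are free. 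Applying Lemma~\ref{3.2.12} (with $r=0$, $n=s$, $m=0$, and $\prod_{i=1}^s \Sigma_i$ playing the role of $\Sigma$, which is $\NID$ and $\PTD$ by hypothesis) produces $b \in (0,a]\cap\Gamma^*$ and objects $E_0, F_0 \in \LNM_{Y \times A^s_K[0,0], \Sigma}$ with $E|_{Y \times A^s_K[0,b]} = \cU_{[0,b]}(E_0)$ and similarly for $F$. Under the equivalence between $\LNM_{Y \times A^s_K[0,0], \Sigma}$ and the category of $\nabla$-modules on $Y$ equipped with commuting endomorphisms $\pa_1, \ldots, \pa_s$ whose eigenvalues lie in $\Sigma_1, \ldots, \Sigma_s$, the morphism $f$ descends to a morphism $f_0 \colon E_0 \to F_0$; this descent uses that for any polynomial $P_i \in K[x]$ with roots in $\Sigma_i$ annihilating $\pa_i$ on both $E_0$ and $F_0$, the subsheaves $E_0 \subset E$ and $F_0 \subset F$ are precisely the kernels of $P_i(\pa_i)$, which follows from the $\NID$ hypothesis via a Taylor-expansion argument in the $t_i$ direction (any higher-order coefficient $v$ in $t_i^a$ would satisfy $P_i(a + \pa_i)v = 0$, forcing $v = 0$ since $a + \Sigma_i$ is disjoint from $\Sigma_i$ for $a \in \Z_{>0}$).

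Finally, $\Ker(f_0)$ and $\Coker(f_0)$ are coherent $\cO_Y$-modules with integrable connection, hence locally free by \cite[2.2.3]{berthelotrig}; they inherit the commuting endomorphisms $\pa_i$, and since $P_i(\pa_i) = 0$ on $E_0$ (resp.\ $F_0$) it continues to hold on the subobject $\Ker(f_0)$ (resp.\ the quotient $\Coker(f_0)$), so their eigenvalues remain in $\Sigma_i$. Since the projection $\pi \colon Y \times A^s_K[0,b] \to Y$ is flat, $\pi^*$ preserves kernels and cokernels, giving $\Ker(f) = \cU_{[0,b]}(\Ker f_0)$ and $\Coker(f) = \cU_{[0,b]}(\Coker f_0)$, which are locally free with exponents in $\Sigma_i$ along $D_i$, hence lie in $\LNM_{X,\Sigma}$. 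The coimage-to-image isomorphism is inherited from the ambient abelian category of coherent $\cO_X$-modules, completing the proof that $\LNM_{X,\Sigma}$ is abelian. I expect the main technical point to be verifying the descent of $f$ to $f_0$ in the second paragraph, which genuinely requires the $\NID$ hypothesis to pin down the essential image of $\cU_{[0,b]}$ as a kernel of $P_i(\pa_i)$; once this identification is made, the rest is formal.
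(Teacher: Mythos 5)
Your proof is correct, and it is a genuine streamlining rather than a reproduction of the paper's argument. Both start with the same local reduction and both invoke Lemma~\ref{3.2.12} to write $E$, $F$ as $\cU_{[0,b]}(E_0)$, $\cU_{[0,b]}(F_0)$, but from there they diverge. The paper reduces all the way to a polydisc $A^n_K[0,a]$ so that $E_0$, $F_0$ become finite-dimensional $K$-vector spaces with commuting matrices over $K$ (after enlarging $K$), fixes an eigenvector $\e_1$ of the residues, proves by an explicit power-series computation that $f(\e_1)$ has constant coefficients, and then concludes via the rank induction borrowed from \cite[3.2.14]{kedlayaI}. You instead keep the non-logarithmic factor $Y$ as an arbitrary smooth affinoid, characterize $E_0$ (resp.\ $F_0$) inside $E$ (resp.\ $F$) as $\bigcap_i \Ker P_i(\pa_i)$ --- where $\NID$ enters, exactly as in the paper, through the invertibility of $P_i(\pa_i + j)$ on $F_0$ for $j \in \Z_{>0}$, the abstract counterpart of the paper's observation that $\xi_1 - \eta_{i+1}$ is not a nonzero integer --- and thereby descend $f$ all at once to $f_0\colon E_0 \to F_0$; Berthelot's local-freeness theorem \cite[2.2.3]{berthelotrig} applied over $Y$ then replaces the rank induction entirely. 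Two points should be made explicit to fully close the argument: shrink $b$ so that Lemma~\ref{3.2.12} applies simultaneously to $E$ and $F$, and the flatness step presupposes $f = \pi^*(f_0)$, which holds because both maps are $\cO$-linear and agree on $E_0$, which generates $\pi^*E_0$ over $\cO_{Y\times A^s_K[0,b]}$. With those noted, the argument is complete and is arguably the more conceptual of the two.
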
 

\begin{proof} 
It suffices to prove that $\Ker(f)$ and $\Coker(f)$ are locally free. 
So we may replace $K$ by a finite extension and it suffices to check the 
local freeness on a neighborhood of each point of $X$ 
(see \cite[3.2.13]{kedlayaI}). Hence, by \cite[2.3.3]{kedlayaI} 
(see also the proof of \cite[3.2.14]{kedlayaI}, \cite[1.18]{kiehl}), 
we may assume $X=A^n_K[0,a]$ for some $n \geq r$ and $a \in (0,\infty)\cap
\Gamma^*$ such that $x_i$ $(1 \leq i \leq r)$ are a part of the 
coordinate of $A^n_K[0,a]$. Then, by replacing $\Sigma$ by 
$\Sigma \times \{0\}^{n-r}$, we may assume that the exponents of $E$ and 
$F$ are 
contained in $\Sigma$ and by Lemma \ref{3.2.12} (with 
$X = \Spm K$ in the notation there), we may assume that 
$E$ and $F$ are in the image of 
$\cU_{[0,a]}: \LNM_{A^n_K[0,0],\Sigma} \lra \LNM_{A^n_K[0,a],\Sigma}$. 
Moreover, by enlarging $K$, we may assume that $\Sigma \subseteq K^n$. \par 
Let $\e_1, ..., \e_l$ (resp. $\f_1, ..., \f_m$) be a basis of 
$E$ (resp. $F$) on which $\pa_i:=t_i\dfrac{\pa}{\pa t_i}$ acts via 
matrices over $K$. We can assume moreover that there exists an element 
$\xi := (\xi_1,...,\xi_r) \in \Sigma$ such that $\pa_i(\e_1)=\xi_i\e_1$. 
Let us consider the following claim:\\
\quad \\
{\bf claim.} \,\, 
$f(\e_1)$ is contained in the $K$-span of $\f_1, ..., \f_m$. \\
\quad \\
If this is true, we can prove the local freeness of 
$\Ker(f)$ and $\Coker(f)$ in the same way as 
\cite[3.2.14]{kedlayaI}, by induction on the rank of $E\oplus F$. 
Hence it suffices to prove the above claim. \par 
Let us put $f(\e_1) = \sum_{i=1}^m a_i(t)\f_i$. Then, to prove the claim, 
it suffices to prove the following statement $(*)_j$ for each 
$1 \leq j \leq n$: \\
\quad \\
$(*)_j$: For $1 \leq i \leq m$, $a_i(t)$ are constant with respect to 
the variable $t_j$. \\
\quad \\
By symmetry with respect to the variables, it suffices to prove $(*)_1$. 
To prove this, we may change the basis $\f_1, ..., \f_m$ by another basis 
of the $K$-span of $\f_1, ..., \f_m$. So we may assume that there exist
elements $\eta_1, ...,\eta_k \in \Sigma_1$ and the sequence 
$0 = m_0 <  m_1 < m_2 < \cdots < m_k=m$ satisfying 
$$ \pa_1(\f_{m_i}) = \eta_i\f_{m_i} \,(1 \leq i \leq k), 
\,\,\, \pa_1(\f_{j}) = \eta_i\f_j+\f_{j+1} \,(m_i<j<m_{i+1}). $$
Let us define $\ol{\nabla}_E, \ol{\nabla}_F$ as the composite map 
\begin{align*}
E & \os{\ol{\nabla}_E}{\lra} E \otimes \omega^1_{A_K^n[0,a]/K} \lra 
E \otimes \omega^1_{A_K^n[0,a]/A_K^{n-1}[0,a]} = E\,\ol{\dlog t_1}, \\
F & \os{\ol{\nabla}_E}{\lra} F \otimes \omega^1_{A_K^n[0,a]/K} \lra 
F \otimes \omega^1_{A_K^n[0,a]/A_K^{n-1}[0,a]} = F\,\ol{\dlog t_1}, 
\end{align*}
respectively. Then we have 
\begin{equation}\label{3.2.14eq1}
f(\ol{\nabla}_E(\e_1)) = f(\xi_1\e_1\,\ol{\dlog t_1}) = 
\left(\sum_{i=1}^m\xi_1a_i(t)\f_i\right)\,\ol{\dlog t_1}. 
\end{equation} 
On the other hand, we have 
{\allowdisplaybreaks{
\begin{align}
& \phantom{=} \ol{\nabla}_F(f(\e_1)) = 
\ol{\nabla}_F\left(\sum_{i=1}^ma_i(t)\f_i\right) = 
\left(\sum_{i=1}^m \dfrac{\pa a_i}{\pa t_1}(t)t_1\f_i + 
\sum_{i=1}^m a_i(t)\pa_1(\f_i)\right)
\label{3.2.14eq2} \\ 
& = 
\sum_{i=0}^{k-1}\left\{\left( 
\dfrac{\pa a_{m_i+1}}{\pa t_1}(t)t_1 + \eta_{i+1}a_{m_i+1}(t)\right)
\f_{m_i+1} 
\right. 
\nonumber \\ & \hspace{2cm} \left. + 
\sum_{j=m_i+2}^{m_{i+1}} \left(
\dfrac{\pa a_j}{\pa t_1}(t)t_1 + \eta_{i+1}a_j(t) + 
a_{j-1}(t) \right) \f_j \right\} \ol{\dlog t_1}. \nonumber 
\end{align} }}
By comparing \eqref{3.2.14eq1} and \eqref{3.2.14eq2}, we obtain the 
equalities 
\begin{align} 
& (\xi_1-\eta_{i+1}) a_{m_i+1}(t) = \dfrac{\pa a_{m_i+1}}{\pa t_1}(t)t_1 
\,\,\, (0 \leq i \leq k-1), \label{3.2.14eq3} \\ 
& (\xi_1-\eta_{i+1}) a_{j}(t) = \dfrac{\pa a_{j}}{\pa t_1}(t)t_1 + 
a_{j-1}(t), 
\,\,\, (0 \leq i \leq k-1, m_i+2 \leq j \leq m_{i+1}). 
\label{3.2.14eq4} 
\end{align} 
Since $\xi_1 - \eta_{i+1}$ is not a non-zero integer, we see from 
\eqref{3.2.14eq3} that $a_{m_i+1}(t)$ $(0 \leq i \leq k-1)$ are 
constant with respect to $t_1$. From this, \eqref{3.2.14eq4} and 
the induction on $j$, we see that $a_j(t)$ 
$(0 \leq i \leq k-1, m_i+2 \leq j \leq m_{i+1})$ are also 
constant with respect to $t_1$. So we have proved the assertion 
$(*)_1$ and so we are done. 
\end{proof} 

\begin{cor}
Let $X$ be a smooth rigid space endowed with $x_1, ..., x_r \in 
\Gamma(X,\cO_X)$ whose zero loci are smooth and meet transversally. 
Let $\Sigma = \prod_{i=1}^{r+n}\Sigma_i$ be a subset of 
$\ol{K}^{r+n}$ such that $\Sigma = \prod_{i=1}^{r}\Sigma_i$ is 
$\NID$ and $\PTD$. Then
$\LNM_{X\times A^n_K[0,0],\Sigma}$ is an abelian 
category. 
\end{cor}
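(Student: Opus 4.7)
The plan is to reduce the assertion directly to Proposition \ref{3.2.14}. Unwinding the definitions, an object of $\LNM_{X \times A^n_K[0,0],\Sigma}$ is the same as an object $(E,\nabla)$ of $\LNM_{X,\prod_{i=1}^r\Sigma_i}$ equipped with commuting $\cO_X$-linear endomorphisms $\pa_1,\dots,\pa_n$ that commute with $\nabla$ and such that, on every affinoid admissible open $U \subseteq X$, the minimal polynomial of $\pa_i|_U$ has roots in $\Sigma_{r+i}$. A morphism in $\LNM_{X \times A^n_K[0,0],\Sigma}$ is then a morphism of the underlying log-$\nabla$-modules on $X$ commuting with every $\pa_i$.

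Given a morphism $f:(E,\nabla_E) \to (F,\nabla_F)$, the first step is to apply Proposition \ref{3.2.14} to the underlying log-$\nabla$-modules on $X$: since $\prod_{i=1}^r\Sigma_i$ is $\NID$ and $\PTD$ by hypothesis, $\Ker(f)$ and $\Coker(f)$ are locally free log-$\nabla$-modules on $X$ with exponents along $\{x_i=0\}$ contained in $\Sigma_i$. Because $f$ intertwines the $\pa_i$'s on source and target, each $\pa_i$ restricts to $\Ker(f)$ and descends to $\Coker(f)$, yielding $n$ pairwise commuting endomorphisms compatible with the induced log connections.

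The remaining step is to verify the eigenvalue condition for these induced endomorphisms. Fix an index $i$ and an affinoid admissible open $U \subseteq X$, and let $P(x)\in K[x]$ (resp.\ $Q(x)\in K[x]$) denote the minimal polynomial of $\pa_i$ on $E|_U$ (resp.\ on $F|_U$), whose roots lie in $\Sigma_{r+i}$ by assumption. Since $\Ker(f)|_U$ is $\pa_i$-stable inside $E|_U$, the relation $P(\pa_i)=0$ holds on $\Ker(f)|_U$, so the minimal polynomial of $\pa_i$ on $\Ker(f)|_U$ divides $P$; dually, $Q(\pa_i)=0$ on $\Coker(f)|_U$, so the minimal polynomial there divides $Q$. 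In both cases the roots remain in $\Sigma_{r+i}$, which places $\Ker(f)$ and $\Coker(f)$ in $\LNM_{X \times A^n_K[0,0],\Sigma}$ and finishes the proof.

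I do not anticipate a serious obstacle in this argument: Proposition \ref{3.2.14} performs the real work at the level of the base $X$, and the condition on the eigenvalues of the extra commuting endomorphisms propagates automatically through subobjects and quotients because minimal polynomials can only divide, and hence cannot enlarge their root sets, under passage to $\pa_i$-stable sub- and quotient modules. The only point worth double-checking is that the induced $\pa_i$ genuinely commute with the connections on $\Ker(f)$ and $\Coker(f)$, but this is immediate from the fact that $f$ itself is a morphism in the category.
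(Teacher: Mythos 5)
Your proof is correct and takes essentially the same route as the paper, which simply states that the corollary is immediate from Proposition \ref{3.2.14}; you have filled in the routine details — that the commuting endomorphisms $\pa_i$ restrict to $\Ker(f)$ and descend to $\Coker(f)$, and that the minimal polynomial of each induced endomorphism divides that of the ambient one, so its roots stay inside $\Sigma_{r+i}$. No gap.
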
 

\begin{proof} 
Immediate from Proposition \ref{3.2.14}. 
\end{proof} 

We have the following remark generalizing 
Remark \ref{3.2.7}, 
which is the analogue of \cite[3.2.16]{kedlayaI}. 

\begin{rem}\label{3.2.16} 
Let $X$ be a connected 
smooth rigid space endowed with $x_1, ...,x_r \allowbreak \in \allowbreak 
\Gamma(X,\cO_X)$ whose zero loci are smooth and meet transversally. 
Let $\Sigma:=\prod_{i=1}^{r+n}\Sigma_i$ be a subset of $\ol{K}^{r+n}$ 
such that 
$\Sigma_i$ $(1 \leq i \leq r)$ are 
$\NID$ and $\PTD$ and let $I$ be an aligned subinterval of $[0,\infty)$. 
\begin{enumerate} 
\item 
In the case $I=[0,0]$, a log-$\nabla$-module on 
$X \times A^n_K(I)$ is potentially $\Sigma$-unipotent if and only if 
it is in the category $\LNM_{X \times A^n_K[0,0],\Sigma}$. \par 
We give a proof of this assertion by induction on the rank. 
(It suffices to prove that an object in $\LNM_{X \times A^n_K[0,0],\Sigma}$ 
is potentially $\Sigma$-unipotent.) Let us take an object in 
$\LNM_{X \times A^n_K[0,0],\Sigma}$, regarded as an object 
$(E,\nabla)$ in $\LNM_{X,\prod_{i=1}^r\Sigma_i}$ 
endowed with commuting endomorphisms $\pa_i$ $(1 \leq i \leq n)$ 
such that all the eigenvalues of 
$\pa_i$ are in $\Sigma_i$ for any $1 \leq i \leq n$. 
We may enlarge $K$ in order that all the eigenvalues are in $K$. 
Since $\pa_i$ are commuting endomorphisms of $(E,\nabla)$, 
$E_1 := \bigcap_{i=1}^n\Ker(\pa_i-\xi_i)$ 
is an subobject of $(E,\nabla)$ in $\LNM_{X,\prod_{i=1}^r\Sigma_i}$ and 
it is non-zero for some $(\xi_1,...,\xi_n) \in \prod_{i=1}^n\Sigma_{r+i}$. 
It is easy to see that $E_1$ is $\Sigma$-constant, and the quotient 
$E/E_1$ is potentially $\Sigma$-unipotent by induction hypothesis. Hence 
$E$ is potentially $\Sigma$-unipotent, as desired. \par 
The same argument shows that, in the case $I=[0,0]$ and 
$\prod_{i=r+1}^{r+n} \Sigma_i \subseteq K^n$, a log-$\nabla$-module on 
$X \times A^n_K(I)$ is $\Sigma$-unipotent if and only if 
it is in the category $\LNM_{X \times A^n_K[0,0],\Sigma}$. In particular, 
$\Sigma$-unipotence is equivalant to potential $\Sigma$-unipotence 
in this case. 
\item 
From (1), we easily see that, under the assumption here 
(with $I$ arbitrary), an object in 
the essential image of the functor $\cU_I:
\ULNM'_{X \times A^n_K[0,0],\Sigma} = 
\LNM_{X \times A^n_K[0,0],\Sigma} \lra 
\LNM_{X \times A^n_K(I),\Sigma}$ is potentially $\Sigma$-unipotent, 
and it is $\Sigma$-unipotent if $\prod_{i=r+1}^{r+n}\Sigma_i \subseteq 
K^n$. 
\end{enumerate}
\end{rem}

Next we prove an important property of the functor $\cU_I$, which is 
an analogue of \cite[3.3.2]{kedlayaI}: 

\begin{prop}\label{3.3.2}
Let $X$ be a 
smooth rigid space endowed with $x_1, ...,x_r \in 
\Gamma(X,\cO_X)$ whose zero loci are smooth and meet transversally and 
let $I$ be a quasi-open subinterval of positive length in 
$[0,\infty)$. Let 
$\Sigma = \prod_{i=1}^{r+n}\Sigma_i$ be a subset of $\ol{K}^{r+n}$ which 
is $\NID$ and $\NLD$. Then, for 
any objects $E,E'$ in 
$\ULNM'_{X \times A^n_K[0,0],\Sigma}
(=\LNM_{X \times A^n_K[0,0],\Sigma}$ by Remark \ref{3.2.16}$)$, 
the canonical morphism 
$$ \Ext^i(E,E') \lra \Ext^i(\cU_I(E),\cU_I(E')) $$ 
is an isomorphism for $i=0,1$. $($Here $\Ext^i$ denotes the 
extension group in the category $\LNM_{X \times A^n_K[0,0],\Sigma}$ and 
$\LNM_{X \times A^n_K(I),\Sigma}.)$
\end{prop}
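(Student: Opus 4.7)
My approach closely follows the Laurent-expansion strategy used in the proof of Lemma~\ref{3.2.8}: analyze horizontal sections (for $i=0$) or gauge classes of extensions (for $i=1$) mode by mode in the $t_i$-variables, using $\NID$ for the formal solvability of the resulting equations and $\NLD$, via the type estimate of \cite[VI, Lemma 1.2]{dgs}, for convergence on $A^n_K(I)$.

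\textbf{Case $i=0$.} In local trivializations of $E$ and $E'$ over $X$, a morphism $\phi\colon\cU_I(E)\to\cU_I(E')$ is a matrix $\Phi(x,t)=\sum_{\mu\in\Z^n}\Phi_\mu(x)t^\mu$ with $\Phi_\mu\in\Hom_{\cO_X}(E,E')$, convergent on $A^n_K(I)$. Horizontality in the $\dlog t_i$-direction decomposes mode by mode into
$$
\Phi_\mu\pa_i^E-\pa_i^{E'}\Phi_\mu=\mu_i\Phi_\mu\qquad(1\le i\le n,\ \mu\in\Z^n).
$$
The commutator operator on the left has eigenvalues $\beta-\alpha$ with $\alpha,\beta\in\Sigma_{r+i}$, and by $\NID$ these never equal a nonzero integer; so for each $\mu\ne 0$ choosing $i$ with $\mu_i\ne 0$ forces $\Phi_\mu=0$. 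The surviving mode $\Phi_0$ commutes with every $\pa_i$ and is $\nabla_X$-horizontal, thus defines a morphism in the lower category. Only $\NID$ is used in this step.

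\textbf{Case $i=1$.} Given an extension $0\to\cU_I(E')\to F\to\cU_I(E)\to 0$, I choose a local $\cO$-linear splitting, in which the connection of $F$ becomes block upper triangular with off-diagonal entry $A+\sum_{i=1}^n C_i\dlog t_i$, where $A$ is a $1$-form in the $dx_j$'s and the $C_i$ are $\Hom_{\cO_X}(E,E')$-valued Laurent series on $A^n_K(I)$. I look for a gauge transformation $\bigl(\begin{smallmatrix}1&u\\0&1\end{smallmatrix}\bigr)$ with $u=\sum_\mu u_\mu(x)t^\mu$ that kills every $\mu\ne 0$ mode of the off-diagonal entry; the residual $\mu=0$ data then represents an extension $F_0$ in the lower category with $F\cong\cU_I(F_0)$. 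Mode by mode the equation reads
$$
\pa_i^{E'}u_\mu-u_\mu\pa_i^E+\mu_i u_\mu=-C_{i,\mu}\qquad(\mu\ne 0,\ 1\le i\le n),
$$
coupled with an $X$-direction equation whose joint consistency comes from the integrability of $F$. For each $\mu\ne 0$ one picks $i$ with $\mu_i\ne 0$, and exactly the $\NID$-invertibility argument that makes $g+i\cdot\id$ invertible in the proof of Lemma~\ref{3.2.8} delivers a unique formal $u_\mu$. Surjectivity of the $\Ext^1$ map follows once the series $u$ is shown to converge (see below); injectivity reduces to Case $i=0$, since a splitting of $\cU_I(F_0)$ descends by that case to a splitting of $F_0$.

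\textbf{Main obstacle.} The essential technical point is the convergence of $u=\sum u_\mu t^\mu$ on all of $A^n_K(I)$. I plan to mimic the bookkeeping of Lemma~\ref{3.2.8}, controlling the operator norms of the inverses of $u\mapsto\pa_i^{E'}u-u\pa_i^E+\mu_i u$ on $\Hom_{\cO_X}(E,E')$ by means of $\NLD$ and the identity
$$
\sum_{s=0}^{\infty}\frac{X^s}{\prod_{j=1}^s(j-\alpha)}=\alpha e^X\sum_{s=0}^{\infty}\frac{(-X)^s}{s!}\cdot\frac{1}{\alpha-s}
$$
of \cite[VI, Lemma 1.2]{dgs}. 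The novelty compared with Lemma~\ref{3.2.8} is that these estimates must now be carried out uniformly over $\mu\in\Z^n$ and in a Laurent (not merely Taylor) regime, since $I$ need not contain $0$, and one must simultaneously accommodate the coupled Koszul compatibilities arising from integrability across the different $t_i$-directions; handling this multi-variable and uniform-in-$\mu$ version of the type estimate is the main delicacy.
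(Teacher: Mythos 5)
Your strategy is genuinely different from the paper's. The paper first applies \cite[3.3.1]{kedlayaI} to identify $\Ext^i(E,E')$ with de Rham cohomology of $F:=E^{\vee}\otimes E'$, then makes two crucial reductions before facing any estimate: (i) by the five lemma and exactness of $\cU_I$ it reduces to $E,E'$ $\Sigma$-constant, so the operators $\pa_i$ on $F$ become scalars up to nilpotent parts; and (ii) a Katz-Oda spectral sequence for $X\times A^n_K(\cdot)\to X\to\Spm K$ decouples the $X$-direction from the annulus direction entirely, reducing the comparison to the relative de Rham complex of the rank-one object $\pi^*(M_\alpha)$. Only then does it build an explicit chain homotopy $\varphi$, dividing a monomial $t_1^{i_1}\cdots t_n^{i_n}$ by $i_l+\alpha_l$ for the least $l$ with $i_l\neq 0$, and invokes $\NID$/$\NLD$ once, for a single scalar $\alpha_l$, to justify convergence of $\varphi$. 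Your approach instead attacks $\Hom$ and $\Ext^1$ directly by Laurent-mode analysis and a gauge transformation, carrying the full matrices $\pa_i^E,\pa_i^{E'}$ along.

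Your $\Ext^0$ argument is correct and clean, and rightly notes that only $\NID$ is needed there. But in the $\Ext^1$ step there is a genuine gap: you never establish the convergence of $u=\sum_\mu u_\mu t^\mu$, and you say yourself it is ``the main delicacy.'' This is precisely where the work lies, and the difficulty in your setup is strictly worse than in the paper's, for two reasons. First, without the reduction to $\Sigma$-constant objects, the operator $Y\mapsto\pa_i^{E'}Y-Y\pa_i^E+\mu_iY$ has a nilpotent part that contributes a polynomial-in-$\mu$ blowup to the inverse norm (as in the $h_{1,i}^{-1},h_2$ bookkeeping of Lemma~\ref{3.2.8}), and you must track this uniformly over $\mu\in\Z^n$ and over the two choices of sign in the Laurent regime. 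Second, without the Katz-Oda decoupling you must simultaneously satisfy the $dx_j$-components and all $\dlog t_i$-components and verify these are compatible in the Koszul sense; you appeal to integrability, which is correct in principle but again must be checked together with the estimate, not separately. The paper's reductions eliminate both sources of coupling before any estimate is made, reducing the convergence claim to a single scalar $\alpha_l$ and the DGS identity. I would strongly recommend incorporating those two reductions — they are exactly what make the convergence argument a one-line application of type estimates rather than the ``uniform-in-$\mu$, multi-variable'' analysis you flag as delicate. As written, the proposal is an incomplete sketch of a workable but harder route.
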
 

\begin{proof}
Let us put $F = E^{\vee}\otimes E'$ (as log-$\nabla$-module on 
$X \times A^n_K[0,0]$ with respect to $x_i,...,x_r,t_1,...,t_n$). 
Then, by \cite[3.3.1]{kedlayaI}, we have 
\begin{align*}
\Ext^i(E,E') & \cong H^i(X,F \otimes \omega^{\b}_{X \times A^n_K[0,0]/K}), \\ 
\Ext^i(\cU_I(E),\cU_I(E')) & \cong 
H^i(X,\cU_I(F) \otimes \omega^{\b}_{X \times A^n_K(I)/K}). 
\end{align*}
Hence it suffices to prove that the map 
$$ 
H^i(X,F \otimes \omega^{\b}_{X \times A^n_K[0,0]/K}) \lra 
H^i(X,\cU_I(F) \otimes \omega^{\b}_{X \times A^n_K(I)/K}) $$
is an isomorphism for $i \geq 0$. 
To prove this, we can enlarge $K$. So, 
by five lemma and the exactness of 
$\cU_I$, we can reduce to the case where $E,E'$ are $\Sigma$-constant. 
Also, we can reduce to the case where $X$ is affinoid (by considering the 
\v{C}ech spectral sequence), and we can assume that $F$ is a free 
$\cO_X$-module. Hence $F$ has the form 
$F_0 \otimes \pi^*(M_{-\xi} \otimes M_{\xi'})$ for some 
$\xi,\xi' \in \prod_{i=1}^n\Sigma_{r+i}$ and for some log-$\nabla$-module 
$F_0$ on $X$ with respect to $x_1,...,x_r$ with $F_0$ free as $\cO_X$-module. 
(Here $\pi$ denotes the projection $X \times A^n_K[0,0] \lra A^n_K[0,0]$.) 
In particular, if we regard $F$ as a log-$\nabla$-module on 
$X \times A^n_K[0,0]$ with respect to $t_1,...,t_n$ relative to $X$, 
it is a finite direct sum of $\pi^*(M_{-\xi} \otimes M_{\xi'})$. \par 
Now let us consider the Katz-Oda type spectral sequence for the diagram 
$X \times A^n_K[0,0] \lra X \lra \Spm K$ and 
the diagram 
$X \times A^n_K(I) \lra X \lra \Spm K$. From the first diagram, we obtain the 
spectral sequence 
$$ 
E_2^{p,q}= H^p(\Gamma(X,\omega^{\b}_{X/K}) \otimes 
H^q(X,F\otimes\omega^{\b}_{X \times A^n_K[0,0]/X})) \,\Longrightarrow\, 
H^{p+q}(X,F \otimes\omega^{\b}_{X\times A^n_K[0,0]/K}) $$
and from the second diagram, we obtain the spectral sequence 
\begin{align*}
E_2^{p,q}& = H^p(\Gamma(X,\omega^{\b}_{X/K}) \otimes 
H^q(X\times A^n_K(I),\cU_I(F)\otimes\omega^{\b}_{X \times A^n_K(I)/X})) \\ 
& \,\Longrightarrow\, 
H^{p+q}(X\times A^n_K(I),\cU_I(F) \otimes\omega^{\b}_{X\times A^n_K(I)/K}). 
\end{align*}
Hence it suffices to prove that the map 
\begin{equation}\label{3.3.2eq1}
H^q(X,F\otimes\omega^{\b}_{X \times A^n_K[0,0]/X}) \lra 
H^q(X\times A^n_K(I),\cU_I(F)\otimes\omega^{\b}_{X \times A^n_K(I)/X})
\end{equation}  
is an isomorphism. Since the map \eqref{3.3.2eq1} depends only on the 
structure of $F$ as a log-$\nabla$-module on 
$X \times A^n_K[0,0]$ with respect to $t_1,...,t_n$ relative to $X$, 
we may suppose that $F$ has the form 
$\pi^*(M_{-\xi} \otimes M_{\xi'}) = \pi^*(M_{\alpha})$, 
where we put $\alpha_i:=\xi'_i-\xi_i, \alpha:=(\alpha_1,...,\alpha_n)$. 
In this case, the map 
\eqref{3.3.2eq1} is the map between cohmologies induced by the following 
map of complexes 
$$ g: \Gamma(X,\omega^{\b}_{X\times A^n_K[0,0]/X}) \lra 
\Gamma(X\times A^n_K(I),\omega^{\b}_{X\times A^n_K(I)/X}), $$
where the complexes $\omega^{\b}_{X\times A^n_K[0,0]/X}, 
\omega^{\b}_{X\times A^n_K(I)/X}$ are the relative 
de Rham complex associated to 
the map $d+\sum_{i=1}^n\alpha_i\dlog t_i$. Let 
$$ h: \Gamma(X\times A^n_K(I),\omega^{\b}_{X\times A^n_K(I)/X}) \lra 
\Gamma(X,\omega^{\b}_{X\times A^n_K[0,0]/X}) $$ 
be the map induced by the map $\cO(X \times A^n_K(I)) \lra 
\cO(X)$ of `taking the constant coefficient'. Then the composite 
$h \circ g$ is the identity. 
Hence it suffices to prove that the composite map 
$g \circ h$ is homotopic to the identity. We construct the 
homotopy $\varphi$ in a similar way to \cite[3.3.2]{kedlayaI}. For a monomial 
of the form 
$$ t_1^{i_1}\cdots t_n^{i_n} \dlog t_{j_1} \wedge \cdots \wedge 
\dlog t_{j_k}, \,\,\,\, 
(i_1,...,i_n \in \Z, 1 \leq j_1 < \cdots j_k \leq n), $$ 
let $l:=l(i_1,...,i_n)$ 
be the least integer with $i_l\not=0$ and send it by $\varphi$ to 
$0$ if $l \notin \{j_1,...,j_k\}$ and to 
$$ \dfrac{1}{i_l+\alpha_l}t_1^{i_1}\cdots t_n^{i_n} \dlog t_{j_1} \wedge 
\cdots \dlog t_{j_{s-1}} \wedge \dlog t_{j_{s+1}} \wedge \cdots 
\dlog t_{j_k} $$
when $l=j_s$. (It is well-defined because $\alpha_l$ is not 
a non-zero integer.) 
We claim that we can extend $\varphi$ in natural way 
to define a map 
$$ \varphi: \Gamma(X\times A^n_K(I),\omega^{k}_{X\times A^n_K(I)/X}) \lra 
\Gamma(X\times A^n_K(I),\omega^{k-1}_{X\times A^n_K(I)/X}). $$
To prove this, it suffices to prove that, for 
a formal power series 
$\sum_{\I}a_{\I}t_1^{i_1}\cdots t_n^{i_n}$ (where 
$\I=(i_1,...,i_n)$ and $a_{\I} \in \Gamma(X,\cO_X)$) 
convergent on $X \times A^n_K(I)$, so is 
the series $\sum_{\I}\dfrac{a_{\I}}{i_{l(\I)}+\alpha_{l(\I)}}t_1^{i_1}\cdots 
t_n^{i_n}$. We can prove this claim easily by using the fact that 
$\alpha_1,...,\alpha_n$ are $p$-adically non-Liouville (which follows from 
the assumption that $\Sigma$ is $\NLD$). Then we can easily check by 
formal power series computation that the map $\varphi$ gives a homotopy 
between $g \circ h$ and the identity. So we are done. 
\end{proof} 

\begin{cor}[{cf. \cite[3.3.4]{kedlayaI}}]\label{3.3.4}
Let $X, x_1, ..., x_r, I,\Sigma$ 
be as in Proposition \ref{3.3.2}. Then $\cU_I$ induces 
equivalences of categories 
$$
\ULNM_{X\times A^n_K[0,0],\Sigma} \os{=}{\lra} 
\ULNM_{X\times A^n_K(I),\Sigma}, \,\,\,\, 
\ULNM'_{X\times A^n_K[0,0],\Sigma} \os{=}{\lra} 
\ULNM'_{X\times A^n_K(I),\Sigma}. 
$$
In particular, we have the canonical equivalence 
$
\ULNM_{X\times A^n_K(I),\Sigma} = 
\ULNM'_{X\times A^n_K(I),\Sigma}$ 
if $\prod_{i=r+1}^{r+n} \Sigma_i \subseteq K^n$. 
\end{cor}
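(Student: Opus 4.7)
The plan is to verify that $\cU_I$ is fully faithful and essentially surjective on each of the two categories, using Proposition \ref{3.3.2} as the main input, and then to deduce the coincidence $\ULNM = \ULNM'$ on $X \times A^n_K(I)$ from Remark \ref{3.2.16}(2).

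Full faithfulness in both cases follows immediately from Proposition \ref{3.3.2} with $i=0$: the ambient $\LNM$ categories are abelian by Proposition \ref{3.2.14}, so $\Ext^0 = \Hom$ there, and the proposition gives $\Hom(E,E') \cong \Hom(\cU_I(E),\cU_I(E'))$; since $\ULNM$ and $\ULNM'$ are full subcategories, this transfers. For essential surjectivity of the first equivalence, I would induct on the filtration length $m$ in the definition of $\Sigma$-unipotence. A $\Sigma$-constant object $\pi_1^*F \otimes \pi_2^* M_\xi$ on $X \times A^n_K(I)$ (with $\xi \in \prod_{i=r+1}^{r+n}(\Sigma_i \cap K)$) is manifestly $\cU_I(G)$, where $G$ is the object of $\LNM_{X\times A^n_K[0,0],\Sigma}$ obtained from $F$ by setting $\pa_i = \xi_i\cdot\id$. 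In the inductive step, given $0 \subset E_1 \subset \cdots \subset E_m = E$ together with lifts $\cU_I(F_{m-1}) \cong E_{m-1}$ and $\cU_I(G) \cong E/E_{m-1}$ from induction, the extension class of $E$ lies in $\Ext^1(\cU_I(G),\cU_I(F_{m-1}))$; Proposition \ref{3.3.2} with $i=1$ identifies this canonically with $\Ext^1(G,F_{m-1})$, yielding a unique extension $0 \to F_{m-1} \to F \to G \to 0$ whose image under $\cU_I$ is $E$ by exactness of $\cU_I$.

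For essential surjectivity of the second equivalence, take $E \in \ULNM'_{X \times A^n_K(I),\Sigma}$ and a finite Galois extension $K'/K$ over which $E_{K'}$ is $\Sigma$-unipotent. Applying the first equivalence over $K'$ produces $F' \in \LNM_{(X\times A^n_K[0,0])_{K'},\Sigma} = \ULNM'_{(X\times A^n_K[0,0])_{K'},\Sigma}$ (the identification being Remark \ref{3.2.16}(1)) with $\cU_I(F') \cong E_{K'}$. The Galois descent data on $E_{K'}$ transport, via full faithfulness of $\cU_I$, to canonical isomorphisms $\sigma^* F' \cong F'$ for $\sigma \in \Gal(K'/K)$; their cocycle condition follows again from full faithfulness (since it holds on the $\cU_I$-images), and Galois descent for coherent sheaves then yields an $F$ over $K$ with $F_{K'} \cong F'$ and $\cU_I(F) \cong E$. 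Finally, when $\prod_{i=r+1}^{r+n}\Sigma_i \subseteq K^n$, Remark \ref{3.2.16}(2) forces the essential image of $\cU_I$ on $X \times A^n_K(I)$ to lie inside $\ULNM_{X \times A^n_K(I),\Sigma}$; combining this with the essential surjectivity onto $\ULNM'_{X \times A^n_K(I),\Sigma}$ just proved gives $\ULNM' \subseteq \ULNM$, which together with the obvious reverse inclusion yields the stated coincidence. The only non-formal ingredient is the Galois descent step, but full faithfulness reduces it to a cocycle verification that is automatic from the descent data on $E$.
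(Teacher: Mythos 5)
Your proof is correct and follows essentially the same route as the paper: full faithfulness is immediate from Proposition \ref{3.3.2} with $i=0$, essential surjectivity of the first functor is a d\'evissage on the filtration length using the $\Ext^1$-identification from Proposition \ref{3.3.2} (the paper phrases this tersely but your induction is exactly what is meant), essential surjectivity of the second functor is the same Galois descent argument enabled by full faithfulness, and the final assertion is the same observation from Remark \ref{3.2.16} (the paper cites part (1), you cite part (2); both amount to $\ULNM = \ULNM' = \LNM$ over $A^n_K[0,0]$ when $\prod_{i>r}\Sigma_i \subseteq K^n$).
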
 

\begin{proof} 
It is immediate from Proposition \ref{3.3.2} that the two functors 
are fully faithful. Moreover, since any $\Sigma$-unipotent object 
can be written as a successive extension by $\Sigma$-constant objects 
and since any $\Sigma$-constant object is in the essential image of 
$\cU_I$, we see that the former functor is essentially surjective, 
by using Proposition \ref{3.3.2} again. \par 
Let us prove that the latter functor is also essentially surjective. 
Take an object $(E,\nabla)$ in $\ULNM'_{X\times A^n_K(I),\Sigma}$. 
Then there exists a finite Galois extension $K \subseteq K'$ such that 
the restriction $(E',\nabla')$ of $(E,\nabla)$ to 
$X_{K'} \times A^n_{K'}(I)$ (where $X_{K'}$ is the rigid analytic space 
over $K'$ naturally induced by $X$) is $\Sigma$-unipotent. 
Then, since the functor 
$\cU_{I,K'}: 
\ULNM_{X_{K'}\times A^n_{K'}[0,0],\Sigma} \lra 
\ULNM_{X_{K'}\times A^n_{K'}(I),\Sigma}$ is an equivalence of categories, 
$(E',\nabla')$ comes from an object $(F',\nabla')$ in 
$\ULNM_{X_{K'}\times A^n_{K'}[0,0],\Sigma}$. Moreover, since 
$(E',\nabla')$ is endowed with descent data $\iota_{\sigma}: 
\sigma^*(E',\nabla') \os{=}{\lra} (E',\nabla') \,(\sigma \in 
{\rm Gal}(K'/K))$, $(F',\nabla')$ is also endowed with the descent data 
which is sent to $\{\iota_{\sigma}\}_{\sigma}$ by $\cU_{I,K'}$. 
Then $(F',\nabla')$ descends uniquely to an object $(F,\nabla)$ in 
$\ULNM'_{X \times A^n_K[0,0],\Sigma}$ which is sent to 
$(E,\nabla)$ by $\cU_I$. Hence the latter functor is also essentially 
surjective. \par 
The last assertion follows from the two equivalences of categories and 
Remark \ref{3.2.16} (1). 
\end{proof} 

\begin{cor}[{cf. \cite[3.3.6]{kedlayaI}}]\label{3.3.6}
Let $X, x_1, ..., x_r, \Sigma$ 
be as in Proposition \ref{3.3.2} and let $I$ be an aligned interval 
of positive length. Then $\cU_I$ induces fully-faithful functors 
$\ULNM^{(')}_{X\times A^n_K[0,0],\Sigma} \lra 
\ULNM^{(')}_{X\times A^n_K(I),\Sigma}.$ 
\end{cor}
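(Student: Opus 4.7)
The plan is to reduce to the quasi-open case already settled in Corollary~\ref{3.3.4}. Since $I$ is aligned of positive length, I can choose a quasi-open aligned subinterval $I' \subseteq I$ of positive length: if $I$ is closed at a non-zero endpoint, shrink $I$ slightly inward at that endpoint using an element of $\Gamma^*$ to preserve alignment; the hypothesis of positive length guarantees that this can be done without making $I'$ empty. This yields a commutative triangle
\begin{equation*}
\ULNM^{(\prime)}_{X \times A^n_K[0,0],\Sigma} \os{\cU_I}{\lra}
\ULNM^{(\prime)}_{X \times A^n_K(I),\Sigma} \os{\rho}{\lra}
\ULNM^{(\prime)}_{X \times A^n_K(I'),\Sigma},
\end{equation*}
whose composition is $\cU_{I'}$, and by Corollary~\ref{3.3.4} applied to the quasi-open $I'$ the composition is an equivalence of categories, hence in particular fully faithful.

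The key additional input I will need is that the restriction functor $\rho$ is faithful on Hom-sets; that is, for locally free log-$\nabla$-modules $F, F'$ on $X \times A^n_K(I)$, the map $\Hom(F,F') \lra \Hom(F|_{X \times A^n_K(I')},F'|_{X \times A^n_K(I')})$ is injective. This is an instance of the rigid analytic identity principle: a morphism of coherent $\cO$-modules on the reduced connected rigid space $X \times A^n_K(I)$ which vanishes on the non-empty admissible open $X \times A^n_K(I')$ is automatically zero. Here I use that $X$ and $A^n_K(I)$ are smooth (hence reduced), and that $A^n_K(I)$ is connected for any aligned interval $I$.

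Combining these two facts gives fully faithfulness of $\cU_I$ by a short diagram chase. Injectivity of $\cU_I$ on Hom-sets is automatic, because $\rho \circ \cU_I = \cU_{I'}$ is injective. For surjectivity, given $\phi: \cU_I(E) \lra \cU_I(E')$ in $\ULNM^{(\prime)}_{X \times A^n_K(I),\Sigma}$, the restriction $\rho(\phi)$ equals $\cU_{I'}(\psi)$ for a unique $\psi: E \lra E'$ by the equivalence at the level of $I'$; then $\rho(\cU_I(\psi)) = \cU_{I'}(\psi) = \rho(\phi)$, and the injectivity of $\rho$ on Hom-sets forces $\cU_I(\psi) = \phi$.

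I do not anticipate a serious obstacle in this argument. The one point requiring a little care is ensuring that the identity principle applies uniformly in the two variants $\ULNM$ and $\ULNM'$: for the primed version, one can either run the whole argument unchanged (the identity principle does not notice the descent data), or else work over a finite Galois extension of $K$ as in the proof of Corollary~\ref{3.3.4} and then descend the uniqueness statement.
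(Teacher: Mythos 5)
Your argument is correct and follows essentially the same route as the paper: pass to a quasi-open subinterval $I'\subseteq I$ of positive length, observe that the composition of $\cU_I$ with the restriction functor $\rho$ to $I'$ is $\cU_{I'}$, which is an equivalence by Corollary~\ref{3.3.4}, and deduce full faithfulness of $\cU_I$ from the faithfulness of $\rho$. The only cosmetic difference is that the paper takes the faithfulness of $\rho$ for granted while you justify it via the identity principle (and your connectedness remark is unnecessary since injectivity of restriction on sections holds component by component, and your aside about ``descent data'' in the primed variant is beside the point since objects of $\ULNM'$ are not equipped with descent data -- but as you note, the same argument simply applies verbatim).
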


\begin{proof} 
It is easy to see that the functor $\cU_I$ is faithful. On the other 
hand, if we take a quasi-open 
subinterval $J \subset I$ of positive length, 
we have the faithful restriction functor 
$\ULNM^{(')}_{X\times A^n_K(I),\Sigma} \lra 
\ULNM^{(')}_{X\times A^n_K(J),\Sigma}$ 
such that the composite functor 
$$ \ULNM^{(')}_{X\times A^n_K[0,0],\Sigma} \os{\cU_I}{\lra} 
\ULNM^{(')}_{X\times A^n_K(I),\Sigma} \lra 
\ULNM^{(')}_{X\times A^n_K(J),\Sigma}, $$ 
which is nothing but $\cU_J$, is an equivalence of categories by 
Corollary \ref{3.3.4}. From this, we see that the functor $\cU_I$ is 
full. 
\end{proof} 

The following proposition is the analogue of \cite[3.2.20]{kedlayaI}. 

\begin{prop}\label{addprop}
Let $X, x_1,...,x_r, \Sigma$ is as in Proposition \ref{3.3.2} and let $I$ be 
a quasi-open subinterval or a closed aligned subinterval of positive 
length in $[0,\infty)$. Then $\ULNM^{(')}_{X \times A^n_K(I),\Sigma}$ is 
an abelian subcategory of $\LNM_{X \times A^n_K(I),\Sigma}$. Moreover, 
for any $E \in \ULNM^{(')}_{X \times A^n_K(I),\Sigma}$, any subquotient of 
$E$ in the category $\LNM_{X \times A^n_K(I),\Sigma}$ 
belongs to the category $\ULNM^{(')}_{X \times A^n_K(I),\Sigma}$. 
\end{prop}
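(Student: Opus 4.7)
The plan is to handle the quasi-open case first via the equivalence of Corollary~\ref{3.3.4}, then reduce the closed aligned case to it by restriction to a quasi-open subinterval.

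\textbf{Quasi-open case.} By Corollary~\ref{3.3.4}, $\cU_I$ gives an equivalence $\ULNM^{(')}_{X \times A^n_K[0,0], \Sigma} \simeq \ULNM^{(')}_{X \times A^n_K(I), \Sigma}$, and by Remark~\ref{3.2.16}(1) the source equals $\LNM_{X \times A^n_K[0,0], \Sigma}$, which is abelian by the corollary to Proposition~\ref{3.2.14}. Hence $\ULNM^{(')}_{X \times A^n_K(I), \Sigma}$ is abelian. To check it is an abelian \emph{sub}category of $\LNM_{X \times A^n_K(I), \Sigma}$, I would observe that $\cU_I$ is exact as a functor into this ambient category: on underlying coherent modules it acts as the pullback $\pi^*$ along the flat projection $\pi: X \times A^n_K(I) \to X$, and the connection is built linearly from the commuting endomorphisms $\pa_i$. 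Thus kernels and cokernels of morphisms in $\ULNM^{(')}$ computed in the ambient $\LNM$ are obtained by applying $\cU_I$ to the corresponding kernels and cokernels at $[0,0]$, and so lie in $\ULNM^{(')}$. For the subquotient property, given $F \subseteq E$ in $\LNM_{X \times A^n_K(I), \Sigma}$ with $E \in \ULNM^{(')}$, I would induct on the length $m$ of a $\Sigma$-constant filtration $0 = E_0 \subset \cdots \subset E_m = E$, setting $F_i := F \cap E_i$. Each $F_i/F_{i-1}$ embeds into the $\Sigma$-constant $E_i/E_{i-1}$, reducing to the base case where $E = \pi_1^*G \otimes \pi_2^*M_\xi$ is itself $\Sigma$-constant. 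Twisting by $M_{-\xi}$ and analyzing the resulting vanishing $\pa_i$-action on $\pi_1^*G$, one shows any sub-log-$\nabla$-module of $\pi_1^*G$ is of the form $\pi_1^*H$ for some sub $H \subseteq G$, hence is $\Sigma$-constant. Closure under quotients then follows from closure under subs together with the abelian subcategory property.

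\textbf{Closed aligned case.} For $I$ closed aligned of positive length, pick a quasi-open subinterval $J \subset I$ of positive length. A morphism $f$ in $\ULNM^{(')}_{X \times A^n_K(I), \Sigma}$ restricts to $f|_J$ in $\ULNM^{(')}_{X \times A^n_K(J), \Sigma}$, whose kernel and cokernel in $\LNM$ at $J$ are, by the quasi-open case, of the form $\cU_J(K_0)$ and $\cU_J(C_0)$ for some $K_0, C_0 \in \LNM_{X \times A^n_K[0,0], \Sigma}$. Using the full faithfulness of $\cU_I$ (Corollary~\ref{3.3.6}) together with the faithfulness of restriction from $I$ to $J$, one identifies $\Ker f$ and $\Coker f$ in $\LNM$ at $I$ with $\cU_I(K_0)$ and $\cU_I(C_0)$, after verifying local freeness at the boundary points of $I$. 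The subquotient property is handled by the same restriction strategy: a sub $F \subseteq E$ at $I$ restricts to a sub at $J$, which by the quasi-open case lies in $\ULNM^{(')}$ at $J$ and hence descends via $\cU_I$.

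\textbf{Main obstacle.} The technical heart is the base case of the subquotient argument: ruling out sub-log-$\nabla$-modules of a $\Sigma$-constant object whose coefficients genuinely depend on the annulus coordinates. This reduces to an ODE-style analysis of the constraint that a local section $\sum f_j g_j$ of the candidate sub be preserved by the connection, which forces the ratios $df_j/f_j$ to coincide and hence the coefficients to be constant up to a common factor. A secondary subtlety is the local freeness of kernels and cokernels in the log-$\nabla$-setting, which is not automatic in general, and which one handles using the smoothness of $X$ together with, for closed aligned $I$, propagation of local freeness from interior to boundary points.
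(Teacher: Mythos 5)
Your overall architecture (reduce the subquotient claim, via a $\Sigma$-constant filtration, to the base case that subobjects and quotients of a $\Sigma$-constant object are again $\Sigma$-constant) matches the paper, and you correctly flag that base case as the technical heart. But your proposal leaves precisely that base case unproved. The phrase ``analyzing the resulting vanishing $\pa_i$-action on $\pi_1^*G$, one shows any sub-log-$\nabla$-module of $\pi_1^*G$ is of the form $\pi_1^*H$'' is not an argument; it is the statement of what must be shown. The difficulty is that a subobject $F\subseteq E$ of a $\Sigma$-constant $E$ is only assumed to live in $\LNM_{X\times A^n_K(I),\Sigma}$, so one has no $\Sigma$-unipotence of $F$ to start from and hence cannot immediately invoke Corollary~\ref{3.3.4} or \ref{3.3.6}; the ``ODE-style analysis'' hides exactly where this is overcome. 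The paper's actual proof of the base case is a multi-stage argument you would need to reconstruct: (i) the quotient case for $n=1$, $X=\Spm K$ via \cite[3.2.19]{kedlayaI}; (ii) dualizing (using that $-\Sigma$ is again $\NID$ and $\NLD$) to get the sub case there; (iii) reducing general $n$ to $n=1$ by viewing $X\times A^{n}_K(I)$ as $Y\times A^1_K(I)$ with $Y=X\times A^{n-1}_K(I)$ and using full faithfulness of $\cU_I$ to move the inclusion to $A^1_K[0,0]$; and, most delicately, (iv) for $n=1$ and $X$ arbitrary, descending along a finite Galois extension to obtain a $K$-rational point $x$ of the annulus, defining the candidate $\pi^*F'\otimes M_\xi$ via the section at $x$, and verifying $F=\pi^*F'\otimes M_\xi$ by pulling back to $A^1_L(I)$ for $L$ the completion of the fraction field of $\cO(X')$, where $X'$ is shrunk to a closed polydisc so that $L$ exists. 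None of this appears in your sketch.

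Two smaller points. First, your ordering is reversed relative to the paper: you prove the quasi-open case first and then descend to the closed aligned case by restricting to a quasi-open subinterval $J\subset I$ and propagating to the boundary. This can be made to work (using that $\Ker f$ and $\cU_I(K_0)$ are both saturated coherent subsheaves of $E$ agreeing on a dense open, hence equal by local freeness of the quotients from Proposition~\ref{3.2.14}), but it is the harder direction; the paper instead does the closed aligned case first and deduces the quasi-open case simply by writing $I=\bigcup_m[a_m,b_m]$ and observing that the resulting $G_m$ are independent of $m$. Second, your remark ``closure under quotients then follows from closure under subs together with the abelian subcategory property'' is circular as stated, since the abelian subcategory property is part of what is being proved; the paper instead passes from subs to quotients by duality, noting that $-\Sigma$ inherits $\NID$ and $\NLD$. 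These are repairable, but the missing base case is a genuine gap.
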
 

\begin{proof} 
It suffices to prove the latter assertion and it suffices to work 
only in the case of $\ULNM_{X \times A^n_K(I),\Sigma}$. Take 
$E \in \ULNM_{X \times A^n_K(I),\Sigma}$ and let 
$F$ be a subquotient of 
$E$ in the category $\LNM_{X \times A^n_K(I),\Sigma}$. 
First we prove the proposition in the case $I$ is quasi-open, assuming 
that the proposition is true in the closed aligned case. Write $I$ as 
the union $I=\bigcup_{m=1}^{\infty}[a_m,b_m]$ of closed aligned intervals. 
Then $F|_{X \times A^n_K[a_m,b_m]}$ are $\Sigma$-unipotent by 
assumption. Then we can write 
$F|_{X \times A^n_K(a_m,b_m)} = \cU_{(a_m,b_m)}(G_m)$ for some 
$G_m \in \LNM_{X \times A^n_K[0,0],\Sigma}$, and the compatibility of 
$F|_{X \times A^n_K(a_m,b_m)}$'s with respect to $m$ implies that 
$G_m$ $(m \in \N)$ does not depend on $m$, which we denote by $G$. 
Then we have 
$F|_{X \times A^n_K(a_m,b_m)} = \cU_{(a_m,b_m)}(G)$ and hence 
$F= \cU_{I}(G)$. Hence $F$ is $\Sigma$-unipotent, as desired. 
So it suffices to prove the proposition in the case where $I$ is a 
closed aligned interval of positive length. \par 
To prove the proposition, it suffices to see that the subquotients of 
a $\Sigma$-constant object $E$ is again $\Sigma$-constant. 
Then it suffices to check one of the subobjects or the quotients are 
$\Sigma$-constant, by considering the dual and noting that 
$-\Sigma := 
\{(\xi_i) \,\vert\, \forall i, -\xi_i \in \Sigma_i\}$ is again 
$\NID$ and $\NLD$. \par 
First we prove that, for $n=1$ and $X=\Spm K$ $(r=0)$, 
being $\Sigma$-constant is stable by taking quotients. 
Let $E$ be a $\Sigma$-constant object and let 
$f: E \lra E'$ be a surjection in $\LNM_{A^1_K(I),\Sigma}$. 
Then, for some $\xi = (\xi_1,...,\xi_n) \in \Sigma \cap K^n$, 
$E \otimes M_{-\xi}$ is $\{0\}^n$-constant, that is, 
constant in the sense of \cite{kedlayaI} 
and $f$ induces the surjection $E \otimes M_{-\xi} \lra 
E' \otimes M_{-\xi}$ in 
$\LNM_{A^1_K(I),\{0\}^n}$. Let $F'$ be the image of 
$\{x \in E \otimes M_{-\xi} \,\vert\, \forall i, 
t_i\frac{\pa}{\pa t_i}(x)=0\}$ 
in $E' \otimes M_{-\xi}$. Then the map 
$F' \otimes_K \cO_{A^n_K(I)} \lra E' \otimes \allowbreak M_{-\xi}$ is 
suejective by construction and injective by \cite[3.2.19]{kedlayaI}. 
Hence it is an isomorphism, that is, $E' \otimes M_{-\xi}$ is 
$\{0\}^n$-constant. Hence $E'$ is $\Sigma$-constant (more precisely, 
$\prod_{i=1}^n\{\xi_i\}$-constant). \par 
Next we check that, for $n$ and $X$ arbitrary, being $\Sigma$-constant is 
stable by taking subobject. Let $E$ 
be a $\Sigma$-constant object and let us take a subobject $F \hra E$ in 
$\LNM_{X \times A^n_K(I),\Sigma}$. First we reduce to the case $n=1$ 
and $X$ arbitrary. To do this, we put $Y:=X \times A^{n-1}_K(I)$. 
Then $E$ is $\Sigma$-constant on $Y \times A^1_K(I)$. 
By the proposition in the case $n=1$, we see that $F$ is also 
$\Sigma$-constant on $Y \times A^1_K(I)$. By definition of 
$\Sigma$-constance, we have $E=\cU_I(E'), F=\cU_I(F')$ for some 
$E',F' \in \LNM_{Y \times A^1_K[0,0],\Sigma}$. 
Then, by Corollary \ref{3.3.6}, the inclusion $F \hra E$ 
comes from an inclusion $F' \hra E'$. 
Let us express 
$E', F'$ as $(E'',\pa_E), (F'',\pa_F)$, where $E'', F''$ are 
objects in 
$\LNM_{Y, \prod_{i=1}^{n-1}\Sigma_i} = \LNM_{X \times A^{n-1}_K(I), 
\prod_{i=1}^{n-1}\Sigma_i}$ and $\pa_E, \pa_F$ are endomorphisms on 
$E'', F''$, respectively. Then $E''$ is 
$\prod_{i=1}^{r+n-1} \Sigma_i$-consant and $\pa_E$ is the 
multiplication by some $\xi \in \Sigma_{r+n} \cap K$. 
Then, 
$F''$ is also $\prod_{i=1}^{r+n-1}\Sigma_i$-constant by induction hypothesis 
and 
$\pa_F = \pa_E|_F$ is also the multiplication by $\xi$. 
Hence $F' = (F'',\pa_F)$ is $\Sigma$-constant and so $F = \cU_I(F')$ is 
$\Sigma$-constant on $X \times A^n_K(I)$. \par 
Finally we prove the claim in the previous paragraph in the case 
$n=1$. Let $F \hra E$ be as above. Note first that, 
to prove the $\Sigma$-constance of $F$, we may replace $K$ by 
its finite Galois extension $K'$: Indeed, if we denote 
the restriction of $F$ to $X_{K'} \times A^1_{K'}(I)$ by $F_{K'}$, it carries 
the descent data $\iota_{\sigma}: 
\sigma^*F_{K'} \os{=}{\lra} F_{K'} \,(\sigma 
\in {\rm Gal}(K'/K))$. Then, if we have $F_{K'} = \cU_I(F_{0,K'})$ 
for some $\Sigma$-constant object $F_{0,K'}$ on 
$X_{K'} \times A^1_{K'}[0,0]$, $\iota_{\sigma}$'s descend uniquely 
to the descent data on $F_{0,K'}$, thanks to the full-faithfulness of 
$\cU_I$ (Corollary \ref{3.3.6}). Hence $F_{0,K'}$ descends to 
a $\Sigma$-constant object $F_0$ in $X_{K} \times A^1_{K}[0,0]$ satisfying 
$\cU_I(F_0) = F$, as desired. So we may assume that 
$A^1_K(I)$ admits a $K$-rational point $x$. Now let us take 
$\xi \in \Sigma_{r+1}$ such that 
$E \otimes \pi^*M_{-\xi}$ is $\prod_{i=1}^r \Sigma_i \times 
\{0\}$-constant, where 
$\pi$ denotes the projection $X \times A^1_K(I) \lra X$. 
Denote the section $X \hra X \times A^1_K(I)$ associated to $x$ 
also by $x$ and put $F' := \im (x^*(F \otimes \pi^*M_{-\xi}) \lra 
x^*(E \otimes \pi^*M_{-\xi}))$. Then we have 
$$ 
\pi^*F' \otimes M_{\xi} = 
\im (\pi^*x^*(F \otimes \pi^*M_{-\xi}) \lra 
\pi^*x^*(E \otimes \pi^*M_{-\xi})) \otimes M_{\xi}  \hra E. $$
To prove the claim, 
it suffices to prove the equality $F= \pi^*F' \otimes M_{\xi}$. 
Hence it suffices to prove that the composition maps 
$F \hra E \lra E/(\pi^*F' \otimes M_{\xi})$ and 
$\pi^*F' \otimes M_{\xi} \hra E \lra E/F$ are zero. 
To check this, we may replace $X$ by $X':=$the complement of the zero loci 
of $x_i$'s. (Then one can ignore the part `$\prod_{i=1}^r \Sigma_i$'.) 
Moreover, it suffices to check on a neighborhood of each 
point of $X'$. Hence we may assume that $X'$ is equal to a closed 
polydisc of radius $p^{-m}$ for some $m$ (after possibly enlarging 
$K$). Let $L$ be the 
completion of the fraction field of $\cO(X')$ with respect to the 
spectrum norm on $\cO(X')$. 
(This is possible because $\cO(X')$ is a reduced affinoid algebra 
whose reduction is an integral domain.) Then it suffices to check the 
vanishing of the above two maps after we pull them back to 
$A^1_L(I)$. Then it is true because we already know that $F$ is 
$\{\xi\}$-constant on $A^1_L(I)$. So we proved the 
claim and hence the proof of the proposition is finished. 
\end{proof} 

Using Corollary \ref{3.3.4} and Proposition \ref{addprop}, 
we can prove the following, which is an analogue 
of \cite[3.3.8]{kedlayaI}: 

\begin{prop}\label{3.3.8} 
Let $X$ be a 
smooth rigid space endowed with $x_1, ...,x_r \in 
\Gamma(X,\cO_X)$ whose zero loci are smooth and meet transversally and 
Let $U$ be the complement of these zero loci. 
Let 
$\Sigma = \prod_{i=1}^{r}\Sigma_i$ be a subset of $\ol{K}^{r}$ which 
is $\NID$ and $\NLD$. Let $E$ be an object in $\LNM_{X,\Sigma}$ 
and let $F$ be a sub $\nabla$-module of the restriction of $E$ to 
$U$. Then $F$ uniquely extends to a subobject $\wt{F}$ of $E$ in 
the category $\LNM_{X,\Sigma}$. 
\end{prop}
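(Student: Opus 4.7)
The plan is to establish uniqueness globally, then reduce the existence to a local statement handled by the equivalences of Corollaries \ref{3.3.4} and \ref{3.3.6}.

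For uniqueness, suppose $\wt F_1, \wt F_2 \subseteq E$ are two such extensions. Since $\LNM_{X,\Sigma}$ is abelian by Proposition \ref{3.2.14} (noting that $\NLD$ implies $\PTD$), the intersection $\wt F_1 \cap \wt F_2$ is again an extension of $F$, so I may assume $\wt F_1 \subseteq \wt F_2$. The quotient $\wt F_2/\wt F_1$ then lies in $\LNM_{X,\Sigma}$, is therefore locally free, and vanishes on the dense open $U$, so it is zero. Uniqueness permits me to work locally on $X$ and to enlarge $K$ freely by Galois descent; I therefore assume $\Sigma \subseteq K^r$.

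For existence, I work locally near a point $p \in X$; let $S := \{i : x_i(p) = 0\}$, of cardinality $s$. Using that the zero loci are smooth and transversal, I may shrink $X$ so that $X = Y \times A^s_K[0,a]$ with $\{x_i\}_{i \in S}$ identified with the coordinates of $A^s_K[0,a]$ and with $x_i$ $(i \notin S)$ invertible on the shrunk $X$; the relevant part of $\Sigma$ is then $\Sigma_S := \prod_{i \in S}\Sigma_i$ (which is $\NID$ and $\NLD$), and I shrink $Y$ further so that $E|_{Y \times \{0\}}$ is free. Lemma \ref{3.2.12} now produces $b \in (0,a]\cap\Gamma^*$ and $E_0 \in \LNM_{Y \times A^s_K[0,0],\Sigma_S}$ with $E|_{Y \times A^s_K[0,b]} = \cU_{[0,b]}(E_0)$; I replace $X$ by $Y \times A^s_K[0,b]$. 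Restrict now to the open polyannulus $V := Y \times A^s_K(0,b) \subseteq U$, on which no coordinate divisor survives: then $E|_V = \cU_{(0,b)}(E_0)$ is $\Sigma_S$-unipotent by Remark \ref{3.2.16}(2) (using $\Sigma_S \subseteq K^s$), and the sub-$\nabla$-module $F|_V \hra E|_V$ is again $\Sigma_S$-unipotent by Proposition \ref{addprop}. Corollary \ref{3.3.4} writes $F|_V = \cU_{(0,b)}(F_0)$, the full-faithfulness of $\cU_{(0,b)}$ (Corollary \ref{3.3.6}) lifts $F|_V \hra E|_V$ to an inclusion $F_0 \hra E_0$, and $\wt F := \cU_{[0,b]}(F_0)$ is a subobject of $E$ in $\LNM_{X,\Sigma}$ whose restriction to $V$ equals $F|_V$. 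Density of $V$ in $U \cap X$ together with local freeness of $E|_U$ then forces $\wt F|_U = F$, and global uniqueness glues the local extensions into the desired $\wt F$ on all of $X$.

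The subtlest step is the $\Sigma_S$-unipotence of $F|_V$: it hinges on enlarging $K$ so that Remark \ref{3.2.16}(2) upgrades the ambient $E|_V$ from potentially to genuinely $\Sigma_S$-unipotent, and on invoking Proposition \ref{addprop}, whose hypotheses ($\Sigma$ being $\NID$ and $\NLD$) are exactly those assumed here. Once $\Sigma_S$-unipotence is in place, the equivalence of categories from Corollary \ref{3.3.4} transports the extension problem from the open polyannulus back to the central fiber $Y \times A^s_K[0,0]$, where producing a sub-log-$\nabla$-module $F_0 \hra E_0$ is automatic.
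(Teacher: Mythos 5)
Your argument follows the same skeleton as the paper's proof: reduce to a local model around a point, use Lemma \ref{3.2.12} to present $E$ locally as $\cU_{[0,b]}(E_0)$, invoke Proposition \ref{addprop} to see that $F$ is in the image of $\cU$ on the punctured polydisc, lift the inclusion via the equivalence of Corollary \ref{3.3.4}, and push back with $\cU_{[0,b]}$. The organizational differences are worth noting but are not a genuinely new route: you treat all $s$ coordinate divisors simultaneously via Lemma \ref{3.2.12} with $n=s$, whereas the paper peels off one divisor at a time (using Lemma \ref{3.2.8}, the $n=1$ case) and runs an induction; and you give an a priori uniqueness argument from the abelian structure of $\LNM_{X,\Sigma}$ (Proposition \ref{3.2.14}) plus local freeness, whereas the paper obtains uniqueness a posteriori from the full faithfulness in Corollary \ref{3.3.4}. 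Your abstract uniqueness argument is clean and correct.

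There is, however, one imprecision you should fix. You write that ``uniqueness permits me to \ldots enlarge $K$ freely by Galois descent; I therefore assume $\Sigma \subseteq K^r$.'' This cannot be achieved literally: $\Sigma$ is a fixed subset of $\ol{K}^r$ and may well be infinite, so no finite extension of $K$ contains it. What you actually need, in order to apply Remark \ref{3.2.16}(2) and get genuine (as opposed to potential) $\Sigma_S$-unipotence of $E|_V$, is that the finitely many exponents of $E_0$ lie in $K$. Two standard fixes: (a) replace $\Sigma$ by the finite set $\Sigma'$ of exponents of $E$ along each $D_i$ (still $\NID$ and $\NLD$, with $\Sigma'\subseteq\Sigma$, so the conclusion for $\Sigma'$ implies that for $\Sigma$), and then enlarge $K$ to contain $\Sigma'$; or (b) drop the $K$-enlargement entirely and run the whole argument in the potentially unipotent categories $\ULNM'$, which is what the paper does --- Remark \ref{3.2.16}(2), Proposition \ref{addprop} and Corollary \ref{3.3.4} all have primed versions that require no rationality of $\Sigma$. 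With either repair the proof goes through. (As a very minor point, the full faithfulness of $\cU_{(0,b)}$ already follows from the equivalence in Corollary \ref{3.3.4} since $(0,b)$ is quasi-open; the appeal to Corollary \ref{3.3.6} is redundant rather than wrong.)
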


\begin{proof} 
The proof is the same as \cite[3.3.8]{kedlayaI}. 
By induction, it suffices to check the following claim: 
Let $Z$ be the zero locus of $t_r$ and suppose that we are given a 
sub log-$\nabla$-module $F$ of the restriction of $E$ to $X-Z$ 
in the category $\LNM_{X-Z,\Sigma}$. Then $F$ uniquely 
extends to a subobject $\wt{F}$ of $E$ in 
the category $\LNM_{X,\Sigma}$. \par 
Since the claim is local, we may assume that $Z, X$ are affinoid. 
Then, by \cite[1.18]{kiehl}, we may assume moreover that 
$X$ has the form $Z \times A^1_K[0,a]$ for some 
$a \in (0,\infty) \cap \Gamma^*$ and that the 
restriction of $E$ to $Z$ is free. Also, we may shrink 
$X$ in order that $X = Z \times A^1_K[0,a)$ with $a$ replaced by a 
smaller positive value. 
So, by Lemma \ref{3.2.8} and Remark \ref{3.2.16},  
we may assume that 
$E$ is potentially $\Sigma$-unipotent and $E$ 
has the form $\cU_{[0,a)}(G)$ for some $G \in 
\LNM_{Z \times A^1_K[0,0],\Sigma}$. 
Since $F$ is a subobject of 
$E|_{Z \times A^1_K(0,a)}$ in the category 
$\LNM_{Z \times A^1_K(0,a),\Sigma}$ and $E|_{Z \times A^1_K(0,a)}$ is 
potentially 
$\Sigma$-unipotent, we see by Proposition \ref{addprop} that $F$ is also 
potentially $\Sigma$-unipotent, that is, $F$ has the form $\cU_{(0,a)}(H)$ for 
some $H \in \LNM_{Z \times A^1_K[0,0],\Sigma}$. By Corollary \ref{3.3.4}, 
we see that the inclusion $F \hra E|_{Z \times A^1_K(0,a)}$ is induced by 
some injection $H \hra G$ in $\LNM_{Z \times A^1_K[0,0],\Sigma}$. 
Then, if we put $\wt{F} := \cU_{[0,a)}(H)$, it defines a 
sub log-$\nabla$-module of $E$ which extends $F$. To see the uniqueness 
of the extension, we may work locally as above. Hence we may assume that 
the extensions should be potentially 
$\Sigma$-unipotent and then the uniqueness 
follows from Corollary \ref{3.3.4}. 
\end{proof}

\section{$\Sigma$-unipotence, generization and log-convergence} 

In this section, we prove three key propositions in this paper. 
The first one, called generization, 
asserts that the property of $\Sigma$-unipotence is 
`generic on the base'. It is an analogue of \cite[3.4.1, 3.4.3]{kedlayaI}. 
The second one, called overconvergent generization, 
asserts that one can extend the the property of $\Sigma$-unipotence 
`overconvergently on the base'. It is an analogue of 
\cite[3.5.3]{kedlayaI}. The third one asserts that, under the condition 
of log-convergence, the property of 
`having exponents in $\Sigma$' implies the 
$\Sigma$-constance. It is an analogue of \cite[3.6.2, 3.6.9]{kedlayaI} 
and a variant of \cite[6.5.2]{bc} (see also \cite{bc0}, \cite{chr}). \par 
Here we explain the slight difference of the results in this section 
from the corresponding ones in 
the paper \cite{kedlayaI}. It seems to us that there is an error 
in the proof of \cite[3.5.3]{kedlayaI} and \cite[3.6.2]{kedlayaI}. 
(See Remark \ref{e} for detail.) By this reason and some technical reason, 
our analogues corresponding to them are slightly weaker than the 
results in \cite{kedlayaI} when restricted to the unipotent 
case. On the other hand, our analogue concerning 
generization is slightly more generalized than in \cite{kedlayaI}, since 
we need a slightly generalized version to prove the main result 
in the next section. \par 
First we recall the notion concerning the convergence of multisequences and 
introduce a notation: 

\begin{defn}[{\cite[2.4.1]{kedlayaI}}]
For an affinoid space $X$, a coherent $\cO_X$-module $E$ and 
$\eta \in [0,\infty)$, a multisequence $\{\vv_I\}_I$ of elements in 
$\Gamma(X,E)$ indexed by $I=(i_1,...,i_n) \in \N^n$ is called $\eta$-null 
if, for any multisequence $\{c_I\}_I$ in $K$ with $|c_I| \leq \eta^{|I|}$ 
$($where $|I| := \sum_{j=1}^ni_j)$, the multisequence $\{c_I\vv_I\}_I$ 
converges to zero in $\Gamma(X,E)$ with respect to the topology induced by 
the affinoid topology on $X$. 
\end{defn} 

\begin{defn} 
Let $X$ be an affinoid rigid space endowed with sections 
$x_1,...,x_r \in \Gamma(X,\cO_X)$, let $I \subseteq [0,\infty)$ 
be a closed aligned subinterval and let $E$ be a log-$\nabla$-module on 
$X \times A^n_K(I)$. Then, for $\xi:=(\xi_1,...,\xi_n) \in K^n$, 
we define $H^0_{X,\xi}(X \times A^n_K(I),E)$ by 
$$ H^0_{X,\xi}(X \times A^n_K(I),E) := 
\left\{\e \in \Gamma(X \times A^n_K(I),E) \,\left\vert\, 
\forall i, \left(t_i\dfrac{\pa}{\pa t_i}-\xi_i\right)(\e)=0\right.\right\}. $$
\end{defn} 

Then the key lemma for the generization is the following 
(this is the analogue of \cite[3.4.1]{kedlayaI}): 

\begin{lem}\label{3.4.1}
Let $A$ be an affinoid algebra such that $X :=\Spm A$ 
is smooth and endowed with $x_1,...,x_r \in \Gamma(X,\cO_X)$ whose zero loci 
are smooth and meet transversally and 
let $A \subseteq L$ be one of the following$:$ \\
$(1)$ \,\, $L$ is an affinoid algebra over $K$ satisfying 
the following conditions$:$ 
$\Spm L$ is smooth, the zero loci of $x_1, ..., x_r$ 
are smooth and meet transversally on $\Spm L$ 
and the supremum norm on $L$ restricts to 
the supremum norm on $A$. \\
$(2)$ \,\, $L$ is a field containing $A$ which is complete for a 
norm restricting to the supremum norm on $A$. \\
Let $I$ be a quasi-open subinterval of positive length in 
$[0,1)$ and let us write 
$\Spm L \times A^n_K(I)$ simply by $A^n_L(I)$ in the case $(1)$. 
$($In the case $(2)$, $A^n_L(I)$ is already defined as polyannulus 
over $L.)$ Let $\Sigma := \Sigma' \times \prod_{i=1}^n\Sigma_i$ be a subset of 
$\ol{K}^r \times \Z_p^n$ which is $\NID$ and $\NLD$ and 
let $E$ be an object in $\LNM_{X \times A^n_K(I),\Sigma}$ such that 
the induced object $F \in \LNM_{A^n_L(I),\Sigma}$ is 
$\Sigma$-unipotent. Then, for any closed aligned subinterval 
$[b,c]$ of $I$, there exists an element $\xi \in \prod_{i=1}^n
\Sigma_i$ such that 
$H^0_{X,\xi}(X \times A^n_K(I),E) \not= 0$. 
\end{lem}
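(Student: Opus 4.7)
The plan is to exhibit a non-zero joint $\xi$-eigenvector in $E$ by first producing one inside $F$ and then descending through the base change $A \hookrightarrow L$. Note that since $\prod_{i=1}^n\Sigma_i \subseteq \Z_p^n$ and $K$ has residue characteristic $p$, we automatically have $\prod_{i=1}^n\Sigma_i \subseteq K^n$, so no field enlargement is required and the candidate $\xi$ will be $K$-rational.

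For the $L$-side, since $F$ is $\Sigma$-unipotent on $A^n_L(I)$, the bottom step of its defining filtration yields a non-zero $\Sigma$-constant sub log-$\nabla$-module $F_1 \hookrightarrow F$. By Definition \ref{unipdef}(1), $F_1 \cong \pi_1^*G \otimes \pi_2^*M_\xi$ for some $\xi \in \prod_{i=1}^n\Sigma_i$ and some non-zero $(G,\nabla_G)$ on $Y$ with exponents in $\Sigma'$ (where $Y=\Spm L$ in case (1) and $Y$ is the $L$-point in case (2)). Any non-zero section $g \in \Gamma(Y,G)$ gives a section $g \otimes 1_\xi$ of $F_1$ satisfying $(\pa_j-\xi_j)(g \otimes 1_\xi)=0$ for all $j$, so the joint $\xi$-eigenspace $H^0_{Y,\xi}(Y \times A^n_K(I),F)$ is non-zero for this particular $\xi$.

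The heart of the argument is descending this eigenvector to $E$. I would consider the $A$-module $W := H^0_{X,\xi}(X \times A^n_K(I),E)$ and the $L$-module $W_L := H^0_{Y,\xi}(Y \times A^n_K(I),F)$, each defined as the joint kernel of the commuting operators $\pa_j-\xi_j$ on global sections. Since the $\pa_j$'s differentiate only the $t_j$'s, they are $A$-linear; and $L$ is flat over $A$ (automatic in case (2), and in case (1) a consequence of the compatibility of supremum norms). Practically, I would first restrict to a closed aligned $[b,c] \subseteq I$ where the sections $\Gamma(X \times A^n_K[b,c],E)$ form a finitely generated Banach $A$-module, verify that on this affinoid the joint kernel of the continuous $A$-linear operators $\pa_j-\xi_j$ commutes with the completed base change $\widehat{\otimes}_A L$, and then take the projective limit as $[b,c]$ exhausts $I$. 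The non-vanishing of $W_L$ then forces $W \ne 0$, as required.

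The main obstacle will be the descent step, and specifically the topological care needed to show that the joint kernel of continuous $A$-linear operators on a Banach $A$-module commutes with $\widehat{\otimes}_A L$, and that this compatibility is preserved in the projective limit over closed aligned $[b,c] \subseteq I$ to give the statement on the quasi-open $(I)$. Case (1) and case (2) require slightly different justifications of this compatibility, with case (2) being nearly formal (since $L$ is a field complete for a norm restricting to the supremum on $A$) and case (1) requiring the norm-compatibility hypothesis more actively.
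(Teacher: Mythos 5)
There is a genuine gap, and you have (to your credit) pointed straight at it yourself: the claim that the joint kernel of the $A$-linear operators $\pa_j-\xi_j$ on $\Gamma(X\times A^n_K[b,c],E)$ commutes with the completed base change $\widehat{\otimes}_A L$ is precisely the hard analytic content of the lemma, and it is not a formal consequence of norm-compatibility or flatness. Two concrete problems. First, $\Gamma(X\times A^n_K[b,c],E)$ is not a finitely generated $A$-module (it is finitely generated over $\cO(X\times A^n_K[b,c]) = A\,\widehat{\otimes}_K\,\cO(A^n_K[b,c])$, an infinite-dimensional Banach $A$-algebra), so ordinary flat base change for kernels does not apply, and completed tensor product does not commute with kernels of continuous $A$-linear maps of general Banach $A$-modules. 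Second, and more tellingly, your sketch never invokes the $\NLD$ hypothesis. That hypothesis is indispensable: without $p$-adic non-Liouvilleness, the operator $\prod_j(\pa_j-\xi_j)$ can fail to have a bounded pseudo-inverse on the complement of the $\xi$-eigenspace, and the horizontal section over $L$ need not descend to (or even exist over) $X$. Any purported proof that does not use $\NLD$ is proving a false statement.

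What the paper actually does is construct, by hand, a sequence of operators $D_l$ built from $Q_i(\pa_i)$ and telescoping rational-function factors $\bigl(\tfrac{j\mp(\pa_i-\xi_{ik})}{j\mp(\xi_{i1}-\xi_{ik})}\bigr)^m$, designed so that on a $\Sigma$-unipotent module over $L$ the $D_l(\vv)$ formally project onto the $\xi$-eigencomponent of the constant term $\vv_{\0}$. The real work (claims 1 through 4) is the quantitative estimate that $\{D_l(\vv)-\prod Q_i(N_i)(\vv_{\0})\}$ is $\eta$-null for some $\eta>1$, and this is exactly where the $\NLD$ assumption enters, via the radius-of-convergence identity from Dwork--Gerotto--Sullivan applied to $\sum_l X^l/\prod_{j=1}^l(j+\sigma)$. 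The operators $D_l$ are defined directly on $E$ over $X\times A^n_K[d,e]$, so once convergence is established there is nothing to descend: the limit is a section over $X$, not over $L$. This is a fundamentally different mechanism from a base-change argument, and I do not see a way to make your descent strategy work without effectively re-deriving these estimates.

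A smaller point: your observation that $\Sigma_i\subseteq\Z_p\subseteq K$ so one gets a $K$-rational $\xi$ without enlarging $K$ is correct and matches what the paper uses implicitly. The first paragraph of your argument, producing a non-zero element of $H^0_{Y,\xi}$ over $L$ from the bottom step of the unipotent filtration, is also fine; it corresponds to the observation $H^0_{L,\xi}(A^n_L[b,c],F)=\{x\in W:\forall i,\ N_i(x)=\xi_i x\}$ appearing in claim~1 of Proposition~\ref{3.4.3}. But this easy half is not where the difficulty lies.
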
 

\begin{proof} 
Since $F$ is $\Sigma$-unipotent, we have $F = \cU_I(W)$ for some 
$W \in \LNM_{A^n_L[0,0],\Sigma}$ and $W$ is regarded as a log-$\nabla$-module 
on $\Spm L$ with respect to $x_1, ..., x_r$ 
(resp. a finite dimensional $L$-vector space) endowed with 
commuting endomorphisms $N_i := t_i\dfrac{\pa}{\pa t_i}$ with eigenvalues in 
$\Sigma_i$ $(1 \leq i \leq n)$ in the case (1) (resp. (2)). 
Let $\xi_{ik} \in \Sigma_i$ $(1 \leq k \leq n_i)$ be the eigenvalues of 
$N_i$ and let $P_i(x) = \prod_{k=1}^{n_i}(x-\xi_{ik})^{m_{ik}} \in K[x]$ 
be the minimal monic polynomial satisfying $P_i(N_i)=0$. 
Let us put $m := \max_{i,k}(m_{ik})$. 
For $\k :=(k_1,...,k_n)$ $(1 \leq k_i \leq n_i)$, let us define 
\begin{align*}
W_{\k} & := \{x \in W \,\vert\, \forall i, (N_i-\xi_{ik_i})^m(x) = 0\}, \\
W'_{\k} & := \{x \in W \,\vert\, \forall i, (N_i-\xi_{ik_i})(x) = 0\}. 
\end{align*}
Then, for some $\k$, $W'_{\k}$ is non-zero. Hence we may assume that 
$W'_{\1}$ (where $\1 := (1,...,1)$) is non-zero. Then there exist 
polynomials $Q_i(x)$ dividing $P_i(x)$ $(1 \leq i \leq n)$ such that 
the map $\prod_{i=1}^nQ(N_i): W \lra W$ is a non-zero map whose image 
is contained in $W'_{\1}$. Also, there exist $L$-linear maps 
$\varphi_{\k}:W \lra W$ ($\k = (k_1,...,k_n)$, $1 \leq k_i \leq n_i$) 
whose image is contained in $W_{\k}$ 
such that $\sum_{\k} \varphi_{\k}$ is equal to the identity map on $W$. \par 
Now define the sequence of operators $D_l$ $(l \in \N)$ on $E$ by 
$$ D_l := 
\prod_{i=1}^{n}\left\{ Q_i(t_i\dfrac{\pa}{\pa t_i}) 
\prod_{k=1}^{n_i}\prod_{j=1}^l 
\left( 
\dfrac{j-(t_i\frac{\pa}{\pa t_i}-\xi_{ik})}
{j-(\xi_{i1}-\xi_{ik})} \cdot 
\dfrac{j+(t_i\frac{\pa}{\pa t_i}-\xi_{ik})}
{j+(\xi_{i1}-\xi_{ik})}\right)^m \right\}. $$
Take an aligned closed subinterval $[d,e]$ in $I$ with 
$d<b, c<e$ if $b>0$ and $d=0,c<e$ if $b=0$. We prove the following claim: \\
\quad \\
{\bf claim 1.} \,\, 
For any $\vv \in \Gamma(X \times A^n_K[d,e],E)$, the sequence 
$\{D_l(\vv)\}_l$ converges to an element in 
$H^0_{X,\xi}(X \times A^n_K[b,c],E)$, where $\xi:=(\xi_{11},...,\xi_{n1})$. \\
\quad \\
To prove the convergence, it suffice to check in 
$\Gamma(A^n_L[b,c],F)$. So we can write $\vv$ as 
$\vv:=\sum_{J}t_1^{j_1}\cdots t_n^{j_n}\vv_J$ with $\vv_J \in W$. 
Let us put $\vv_{J,\k} := \varphi_{\k}(\vv_J).$ Then we have 
\begin{align*}
D_l(\vv) & = 
\sum_{\k} \left\{ 
\sum_J t_1^{j_1} \cdots t_n^{j_n} \cdot \right. \\
& \left. \prod_{i=1}^n \left\{ 
Q_i(N_i+j_i) 
\prod_{k=1}^{n_i}\prod_{j=1}^l 
\left( 
\dfrac{j-(N_i+j_i-\xi_{ik})}{j-(\xi_{i1}-\xi_{ik})} \cdot 
\dfrac{j+(N_i+j_i-\xi_{ik})}{j+(\xi_{i1}-\xi_{ik})}\right)^m \right\}
(\vv_{J,\k}) \right\}. 
\end{align*}

We compute the term 
\begin{equation}\label{3.4.1eq1}
\prod_{i=1}^n \left\{ 
Q_i(N_i+j_i) 
\prod_{k=1}^{n_i}\prod_{j=1}^l 
\left( 
\dfrac{j-(N_i+j_i-\xi_{ik})}
{j-(\xi_{i1}-\xi_{ik})} \cdot 
\dfrac{j+(N_i+j_i-\xi_{ik})}
{j+(\xi_{i1}-\xi_{ik})}\right)^m \right\}(\vv_{J,\k})
\end{equation} 
for each $J,\k$. First, in the case $J=\0 := (0,...,0)$, 
we have $\prod_{i=1}^nQ(N_i)(\vv_{\0,\k}) \in W'_{\1}$. Hence 
the term \eqref{3.4.1eq1} is equal to 
$$ 
\prod_{i=1}^n 
\prod_{k=1}^{n_i}\prod_{j=1}^l 
\left( 
\dfrac{j-(\xi_{i1}-\xi_{ik})}
{j-(\xi_{i1}-\xi_{ik})} \cdot 
\dfrac{j+(\xi_{i1}-\xi_{ik})}
{j+(\xi_{i1}-\xi_{ik})}\right)^m 
\left(\prod_{i=1}^nQ(N_i)(\vv_{\0,\k})\right) = 
\prod_{i=1}^nQ(N_i)(\vv_{\0,\k}). $$
Second, let us consider the case where $J := (j_1,...,j_n)$ contains some 
factor $j_i$ with $|j_i| \leq l$. If we put $\k:=(k_1,...,k_n)$, 
then the term 
\eqref{3.4.1eq1} contains a factor 
$$ 
\left(\dfrac{j_i-(N_i+j_i-\xi_{ik_i})}{j_i-(\xi_{i1}-\xi_{jk_i})}\right)^m 
= (\const) \cdot (N_i-\xi_{ik_i})^m $$ 
or a factor 
$$ 
\left(\dfrac{-j_i+(N_i+j_i-\xi_{ik_i})}{-j_i+(\xi_{i1}-\xi_{jk_i})}\right)^m 
= (\const) \cdot (N_i-\xi_{ik_i})^m. $$ 
Hence we can conclude that the term \eqref{3.4.1eq1} is equal to zero in 
this case. Finally, let us consider the case where 
any factor $j_i$ of $J := (j_1,...,j_n)$ satisfies $|j_i|>l$. 
In this case, let us rewrite the term \eqref{3.4.1eq1} as 

\begin{align}
\label{3.4.1eq2}
& \prod_{i=1}^n \left\{ 
Q_i((N_i-\xi_{ik_i})+(\xi_{ik_i}+j_i)) 
\prod_{k=1}^{n_i}\prod_{j=1}^l 
\left( 
\dfrac{(j-j_i+\xi_{ik}+\xi_{ik_i})-(N_i-\xi_{ik_i})}
{j-(\xi_{i1}-\xi_{ik})} \right. \right. \\ & 
\left. \left. \hspace{6cm} \cdot 
\dfrac{(j+j_i-\xi_{ik}+\xi_{ik_i})+(N_i-\xi_{ik_i})}
{j+(\xi_{i1}-\xi_{ik})}\right)^m \right\}(\vv_{J,\k}). \nonumber
\end{align} 
Then we see that the operator in 
\eqref{3.4.1eq2} between parentheses $\{ \phantom{aaa}\}$ is a polynomial on 
$N_i-\xi_{ik_i}$ and hence it can be written as the form 
$\sum_{\alpha=0}^{m-1} c_{i,\alpha}^{j_i,l}(N_i-\xi_{ik_i})^{\alpha}$ for 
some $c_{i,\alpha}^{j_i,l} \in K$ modulo $(N_i-\xi_{ik_i})^m$. 
Then the term \eqref{3.4.1eq1} can be expressed as 
$$ 
\sum_{\alpha_1,...,\alpha_n=0}^{m-1} 
\prod_{i=1}^n\{ c_{i,\alpha_i}^{j_i,l}(N_i-\xi_{ik_i})^{\alpha_i}\}
(\vv_{J,\k}). $$ 

Therefore, we can calculate $D_l(\vv)- \prod_{i=1}^nQ_i(N_i)(\vv_{\0})$ 
as follows: 
\begin{align}
& D_l(\vv)- \prod_{i=1}^nQ_i(N_i)(\vv_{\0}) \label{3.4.1eq3} \\ = & 
\sum_{\k}\sum_{\alpha_1,...,\alpha_n=0}^{m-1} 
\sum_{\scriptstyle J \atop \scriptstyle 
\forall i, \, |j_i|>l} \left(\prod_{i=1}^n 
c_{i,\alpha_i}^{j_i,l}\right) 
t_1^{j_1}\cdots t_n^{j_n} \cdot \prod_{i=1}^n(N_i - \xi_{ik_i})^{\alpha_i} 
(\vv_{J,\k}). \nonumber 
\end{align}
Since $\prod_{i=1}^nQ_i(N_i)(\vv_{\0})$ is killed by $N_i - \xi_{i1}$ for any 
$i$, the proof of claim 1 is reduced to the following claim: \\
\quad \\
{\bf claim 2.} \,\, The sequence 
$\{D_l(\vv)- \prod_{i=0}^nQ_i(N_i)(\vv_{\0})\}$ is 
$\eta$-null for some $\eta>1$. \\
\quad \\
By \eqref{3.4.1eq3} and \cite[3.1.11]{kedlayaI} (see also the proof of 
\cite[3.4.1]{kedlayaI}), the claim 2 is reduced to the following: \\
\quad \\
{\bf claim 3.} \,\, 
There exists a constant $A$ such that, for any $\delta>1$, we have the 
inequality $|c_{i,\alpha}^{j',l}| \leq (\const) \, \delta^{A|j'|}$ for 
all $i,\alpha$ and $j'$ with $|j'|>l$. \\
\quad \\
We prove claim 3. Let us define $p^{j'}_{i,\alpha}, q^{j',l}_{i,k,\alpha}, 
r^{j',l}_{i,k,\alpha} \in K$ by 
\begin{align*}
Q_i(x-(\xi_{ik_i}+j')) & = 
\sum_{\alpha=0}^{m-1} p^{j'}_{i,\alpha}x^{\alpha} \,\,\,\, 
\,\,{\rm mod}\,x^m, \\
\prod_{j=1}^l \dfrac{(j-j'+\xi_{ik}-\xi_{ik_i})-x}{j-(\xi_{i1}-\xi_{ik})} 
& = 
\sum_{\alpha=0}^{m-1} q^{j',l}_{i,k,\alpha}x^{\alpha} \,\,\,\, 
\,\,{\rm mod}\,x^m, \\
\prod_{j=1}^l \dfrac{(j+j'-\xi_{ik}+\xi_{ik_i})-x}{j+(\xi_{i1}-\xi_{ik})} 
& = 
\sum_{\alpha=0}^{m-1} r^{j',l}_{i,k,\alpha}x^{\alpha} \,\,\,\, 
\,\,{\rm mod}\,x^m. 
\end{align*}
Then, to prove the claim 3, it suffices to prove the same assertion for 
$p_{i,\alpha}^{j'}$, $q_{i,k,\alpha}^{j',l}$ and $r_{i,k,\alpha}^{j',l}$ 
$(1 \leq k \leq n_i)$. 
Since $\xi_{ik_i}+j'$ is in $\Z_p$, $|p_{i,\alpha}^{j'}|$ is equal or less 
than the maximum of the absolute values of the coefficients of $Q_i$'s, 
which is independent of $i,\alpha$ and $j'$. 
Hence the claim 3 is true for $|p_{i,\alpha}^{j'}|$. Next, let us note 
that $\pm(\xi_{i1}-\xi_{ik}), \pm(\xi_{ik}-\xi_{ik_i})$ are in 
$\Z_p-(\Z-\{0\})$ and $p$-adically non-Liouville. Hence the claim 3 for 
$q_{i,k,\alpha}^{j',l}$ and $r_{i,k,\alpha}^{j',l}$ are reduced to the 
following claim: \\
\quad \\
{\bf claim 4.} \,\, 
Let $\sigma, \tau \in \Z_p-(\Z-\{0\})$ be $p$-adically non-Liouville numbers 
and define $s_{\alpha}^{j',l}$ by 
$$ \prod_{j=1}^l\dfrac{(j+j'+\tau)+x}{j+\sigma} = 
\sum_{\alpha=0}^{m-1} s_{\alpha}^{j',l} x^{\alpha} \,\,\,\, 
\,\,{\rm mod}\,x^m. $$ 
Then, for any $\delta>1$, we have 
$|s_{\alpha}^{j',l}| \leq (\const)\, \delta^{(2m-1)|j'|}$ 
for any $\alpha$ and $j'$ with $|j'|>l$. \\
\quad \\
We prove this claim. Let us fix $\delta>1$. Then by the 
$p$-adic non-Liouvilleness of $\tau$, we have, for some constant $C>1$, 
the inequalities  
$$ |j+j'+\tau|^{-1} < C\, \delta^{|j+j'|} < C\, 
\delta^{2|j'|} $$ 
for any $j \leq l < |j'|$. Hence we have
$$ 
|s_{\alpha}^{j',l}| \leq C^{\alpha} \delta^{2|j'|\alpha} |s_0^{j',l}|
\leq C^{m-1}\, \delta^{2(m-1)|j'|} |s_0^{j',l}| $$
for any $\alpha$ and $j'$ with $|j'|>l$. So it suffices to prove the 
inequality $|s_0^{j',l}| \leq (\const)\, \delta^l$ for any $j'$ with 
$|j'|>l$. \par 
Note that we have $s_0^{j',l} = \prod_{j=1}^l\dfrac{j+j'+\tau}{j+\sigma} 
= \prod_{j=1}^l\dfrac{j+j'+\tau}{j} \cdot \prod_{j=1}^l\dfrac{j}{j+\sigma}$, 
and it is easy to see the inequality 
$\left| \prod_{j=1}^l\dfrac{j+j'+\tau}{j} \right| \leq 1$. 
Moreover, the radius of convergence of the series 
$\sum_{l=0}^{\infty}\dfrac{X^l}{\prod_{j=1}^l(j+\sigma)}$ is 
equal to $p^{-1/(p-1)}$ by \cite[VI Lemma 1.2]{dgs} and 
for any $\delta >1$, we have 
$|\prod_{j=1}^l j| \leq (\const) \, (\delta p^{-1/(p-1)})^l$. 
Therefore, for any $\delta>1$, we have
$|\prod_{j=1}^l\dfrac{j}{j+\sigma}| < (\const)\,\delta^l$. So we obtain 
the inequality $|s_0^{j',l}| \leq (\const) \, \delta^l$ 
and hence we have proved the
 claim 4. So the proof of the claims 1,2 and 3 are also finished. \par 
By the claim, we can define, for any $\vv \in \Gamma(X \times A^n_K[d,e],E)$ 
the element $f(\vv) := \lim_l D_i(\vv) \in H^0_{X,\xi}(X \times 
A^n_K[b,c],E)$ and it is equal to $\prod_{i=1}^nQ_i(N_i)(\vv_{\0})$. 
If we take a generator $\{\w_1,...,\w_s\}$ of 
$\Gamma(X \times A^n_K[d,e],E)$, we see that the $L$-span of the set 
$\{t^J\w_i\,\vert\,J \in \Z^n, 1 \leq i \leq s\}$ 
(resp. $\{t^J\w_i\,\vert\,J \in \N^n, 1 \leq i \leq s\}$) is dense in 
$\Gamma(A^n_L[b,c],F)$ when $b>0$ (resp. $b=0$). 
Hence the $L$-span on the 
image of this set by $f$ is dense 
in $\prod_{i=1}^nQ_i(N_i)(W)$, which is non-zero. Hence there exists an 
element $\vv \in \Gamma(X \times A^n_K[d,e],E)$ with $f(\vv) \not= 0$. 
Hence we have $H^0_{X,\xi}(X \times A^n_K[b,c],E) \not= 0$, as desired. 
\end{proof}

By using Lemma \ref{3.4.1}, we can prove the following proposition, 
which is the analogue of \cite[3.4.3]{kedlayaI}: 

\begin{prop}[generization]\label{3.4.3}
Let $A$ be an affinoid algebra such that $X :=\Spm A$ 
is smooth and endowed with $x_1,...,x_r \in \Gamma(X,\cO_X)$ whose zero loci 
are smooth and meet transversally and 
let $A \subseteq L$ be one of the following$:$ \\
$(1)$ \,\, $L$ is an affinoid algebra over $K$ such that 
$\Spm L$ is smooth, $x_i$'s are invertible in $L$ 
and that the supremum norm on $L$ restricts to 
the supremum norm on $A$. \\
$(2)$ \,\, $L$ is a field containing $A$ which is complete for a 
norm restricting to the supremum norm on $A$. \\
Let $I$ be a quasi-open subinterval of positive length in 
$[0,1)$ and let us define $A^n_L(I)$ 
as in Lemma \ref{3.4.1}. 
Let $\Sigma := \Sigma' \times \prod_{i=1}^{n} \Sigma_i$ be a subset of 
$\ol{K}^r \times \Z_p^n$ which is $\NID$ and $\NLD$, and 
let $E$ be an object in $\LNM_{X \times A^n_K(I),\Sigma}$ such that 
the induced object $F \in \LNM_{A^n_L(I),\Sigma}$ is 
$\Sigma$-unipotent. Then $E$ is also $\Sigma$-unipotent. 
\end{prop}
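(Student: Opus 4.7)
My plan is to induct on $\rk E$, reducing the result to the construction of a single nonzero $\Sigma$-constant sub log-$\nabla$-module $E_1 \subseteq E$ in $\LNM_{X \times A^n_K(I), \Sigma}$. Given such an $E_1$, the quotient $E/E_1$ has strictly smaller rank, its restriction to $A^n_L(I)$ is again $\Sigma$-unipotent by Proposition \ref{addprop}, the induction hypothesis yields $\Sigma$-unipotence of $E/E_1$, and $E$ is $\Sigma$-unipotent by the extension-closedness of $\ULNM_{X \times A^n_K(I),\Sigma}$ (Proposition \ref{addprop} again).

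To construct $E_1$, first invoke Corollary \ref{3.3.4} to write $F = \cU_I(W)$ for some $W \in \LNM_{A^n_L[0,0],\Sigma}$, so that the commuting operators $N_i = t_i \partial_{t_i}$ act on the finite-dimensional $L$-vector space $W$ with a finite joint spectrum $S \subseteq \prod_{i=1}^n\Sigma_i$. Exhaust $I$ by a nested sequence of closed aligned subintervals $[b_1,c_1]\subseteq[b_2,c_2]\subseteq\cdots$ with union $I$, and apply Lemma \ref{3.4.1} to each $[b_m,c_m]$ to obtain $\xi_m \in \prod_i \Sigma_i$ with $H^0_{X,\xi_m}(X \times A^n_K[b_m,c_m],E) \neq 0$. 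Each $\xi_m$ must lie in the finite set $S$, so after passing to a subsequence $\xi_m = \xi$ is constant in $m$. Twisting $E$ by $\pi_2^* M_{-\xi}$, an exact operation that shifts $\Sigma_i$ to $\Sigma_i - \xi_i$ (still NID and NLD), reduces to the case $\xi = 0$.

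Set $V_m := H^0_{X,0}(X \times A^n_K[b_m,c_m],E)$, the $A$-module of sections annihilated by every $N_i$. Integrability of the connection makes each $V_m$ stable under the $x$-part of $\nabla$, endowing it with a natural log-$\nabla$-module structure over $X$ in $x_1,\ldots,x_r$ with exponents in $\Sigma'$. The natural map $V_m \otimes_A L \hookrightarrow W^{N_1 = \cdots = N_n = 0}$ exhibits the $A$-ranks of $V_m$ as uniformly bounded by a fixed finite-dimensional $L$-space, and the transition maps $V_{m+1} \to V_m$ are injective by the unique analytic continuation of the system $N_i v = 0$ in the $t$-variables. Hence the inverse system stabilizes to a nonzero coherent submodule $V$, and its pullback $\pi_1^* V \hookrightarrow E$ (with trivial $t$-connection under our normalization) yields the desired $\Sigma$-constant subobject $E_1$.

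The hard part will be this last paragraph: one must verify that the horizontal sections over the various $[b_m,c_m]$ assemble to horizontal sections over all of $X \times A^n_K(I)$, that $V$ is indeed coherent as $\cO_X$-module, and that $\pi_1^*V$ is a genuine sub log-$\nabla$-module of $E$ in $\LNM_{X \times A^n_K(I),\Sigma}$. All of these rest on the fact that $F = \cU_I(W)$ is globally $\Sigma$-unipotent on $A^n_L(I)$: this rigidifies the formal $t$-expansion of any horizontal section (forcing convergence on the whole $I$), identifies the restriction of $\pi_1^*V$ to $A^n_L(I)$ as a $\Sigma$-constant sub of $F$, and in combination with Proposition \ref{3.2.14} ensures that the image of $\pi_1^*V$ in $E$ lives in the correct abelian category.
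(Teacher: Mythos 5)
Your overall reduction is the same as the paper's: use Lemma \ref{3.4.1} to find a nontrivial space of $\xi$-horizontal sections, upgrade it to a nonzero $\Sigma$-constant sub log-$\nabla$-module, and then induct on rank via Proposition \ref{addprop}. The key difference is that you try to construct this subobject directly on all of $X \times A^n_K(I)$, whereas the paper fixes a closed aligned $[b,c] \subset I$, constructs the subobject $G$ there, and only at the very end lets $[b,c]$ vary. Your route is not obviously illegal, but it needs the fact (via the norm compatibility of $A \subseteq L$) that a horizontal section over $[b_m,c_m]$ whose $L$-side image lies in the constant space $W$ actually converges on all of $X \times A^n_K(I)$; this makes the transition maps $V_{m+1} \to V_m$ \emph{iso}morphisms (not merely injective), which is the actual reason the system stabilizes. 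Bounding ranks, as you suggest, is not enough: a descending chain of nonzero submodules of a Noetherian module need not stabilize in general.

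The genuine gap is the step from the coherent $\cO_X$-module $V$ to a \emph{sub log-$\nabla$-module} of $E$. Objects of $\LNM_{X,\Sigma}$ are required to be locally free (Definition \ref{lognabdef} builds this into the definition, and, unlike the $\nabla$-module case on a smooth space, it is not automatic). Stability of $V$ under derivations gives local freeness on $X' := X \setminus \{x_1 \cdots x_r = 0\}$, where there is no log structure, but there is no reason for $V$ to be locally free on all of $X$. The paper handles this precisely by restricting $H_E$ to $X'$ to get a $\Sigma$-constant subobject $G'$ of $E|_{X'}$, and then invoking Proposition \ref{3.3.8} (which relies on Lemma \ref{3.2.8}, Corollary \ref{3.3.4}, and Proposition \ref{addprop}) to extend $G'$ uniquely to a subobject $G \in \LNM_{X \times A^n_K[b,c],\Sigma}$. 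Even then, $G$ is only known to be a subobject; showing $G$ is actually $\Sigma$-constant requires the surjectivity of $\pi_X^* H_G \to G$ (the paper's Claim 3), proved by running the $\lim_l D_l$ operator again with $Q_i = 1$. You acknowledge these verifications as ``the hard part,'' but the proposal contains no mechanism equivalent to the $X' \to X$ extension via Proposition \ref{3.3.8}; your appeal to Proposition \ref{3.2.14} does not fill this gap, since that proposition concerns kernels and cokernels of morphisms already known to lie in the category, not promotion of a coherent submodule to a locally free one. Without this piece, $\pi_1^* V$ is not known to be an object of $\LNM_{X \times A^n_K(I),\Sigma}$ and the induction cannot start.
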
 

\begin{proof} 
The proof is similar to that in \cite[3.4.3]{kedlayaI}, 
although we provide some supplementary argument which was missing there. \par 
Fix any closed aligned subinterval $[b,c] \subseteq I$ of positive length. 
Then, by Lemma \ref{3.4.1}, there exists some 
$\xi:=(\xi_1,...,\xi_n) \in \prod_{i=1}^{n} \Sigma_i$ such that 
$H_E := H^0_{X,\xi}(X \times A^n_K[b,c],E) \allowbreak 
\not=0$. First we prove the 
following claim: \\
\quad \\
{\bf claim 1.} \,\, $H_E$ is a finitely generated $A$-module and the 
canonical map $\pi_X^*H_E \lra E$ (where $\pi_X: X \times A^n_K[b,c] \lra X$ 
is the projection) is injective. \\
\quad \\
Let us take any finitely 
generated sub $A$-module $M \subseteq H_E$ and consider the following 
diagram: 
\begin{equation}\label{3.4.3eq1}
\begin{CD}
\pi^*_{X}M @>>> E \\ 
@VVV @VVV \\ 
\pi_L^*H^0_{L,\xi}(A^n_L[b,c],F) @>>> 
F|_{A^n_L[b,c]}. 
\end{CD}
\end{equation}
Then the vertical arrows are injective. 
We prove that the lower horizontal arrow is also injective: 
Indeed, if we write $F|_{A^n_L[b,c]}$ by 
$F|_{A^n_L[b,c]} = \cU_{[b,c]}(W)$ as in the proof of Lemma \ref{3.4.1}, 
a formal power series computation shows the equality 
$H^0_{L,\xi}(A^n_L[b,c],F) = \{x \in W \,\vert\, \forall i, N_i(x)=\xi_ix\}$. 
(Here $N_i$ is as in the proof of Lemma \ref{3.4.1}.) Hence we have the 
injection $H^0_{L,\xi}(A^n_L[b,c],F) \hra W$ which induces the desired 
injection $\pi_L^*H^0_{L,\xi}(A^n_L[b,c],F) \hra \pi_L^*W = F|_{A^n_L[b,c]}$. 
So, by the diagram \eqref{3.4.3eq1}, we see that the map 
$\pi_{X}^*M \lra E$ is injective for all finitely generated sub $A$-module 
$M$ of $H_E$. By the Noetherianness of $E$, we can choose $M$ so that the 
image of $\pi_{X}^*M$ in $E$ is maximal among the modules of this form. 
Then, if $M$ is strictly smaller than $H_E$, 
we can choose a finitely generated 
sub $A$-module $M'$ of $H_E$ which strictly contains $M$ and then we have a 
strict injection $\pi_{X}^*M \subsetneq \pi_{X}^*M'$, which contradicts the 
definition of $M$. Hence we have $M=H_E$. So $H_E$ is a finitely generated 
$A$-module and the map $\pi_{X}^*H_E \lra E$ is injective, as desired. \par 
By claim 1, we can naturally regard $H_E$ as a coherent $\cO_X$-module. 
Let $X'$ be the complement of the zero loci of $x_1,...,x_r$ in $X$ 
and let us put $G':=\pi_{X'}^*(H_E|_{X'})$ (where $\pi_{X'}$ denotes 
the restriction of $\pi_X$ to $X' \times A^n_K[b,c]$), which is 
a coherent submodule of $E' := E|_{X' \times A^n_K[b,c]}$. Then we prove the 
following claim: \\ 
\quad \\
{\bf claim 2.} \,\,\, 
$G'$ has a natural structure of a $\Sigma$-constant log-$\nabla$-module 
on $X' \times A^n_K[b,c]$ which is a subobject of $E'$ 
in $\LNM_{X' \times A^n_K[b,c],\Sigma}$. \\
\quad \\
Indeed, 
we see easily that $H_E|_{X'} 
\subseteq E'$ is stable by continuous derivations 
of $X'$ over $K$ and it admits commuting endomorphisms $t_i(\pa/\pa t_i)$ 
($1 \leq i \leq n$), which is nothing but the multiplication by $\xi_i$. 
If we forget the endomorphisms $t_i(\pa/\pa t_i)$, $H_E|_{X'}$ is regarded as 
a $\nabla$-module on $X'$. Hence it is a locally free $\cO_{X'}$-module. 
Then, adding the endomorphisms $t_i(\pa/\pa t_i)$ 
($1 \leq i \leq n$), $H_E|_{X'}$ is regarded as an object in 
$\LNM_{X' \times A^n_K[0,0],\Sigma}$. Then $G' = \pi_{X'}^*(H_E|_{X'}) 
\hra E'$ 
is naturally regarded as $\cU_{[b,c]}(H_E|_{X'})$. Hence $G'$ is 
$\Sigma$-constant (in fact $\Sigma' \times 
\prod_{i=1}^n\{\xi_i\}$-constant), 
as desired. \par 
Now let $G \subseteq E$ be the subobject in the category 
$\LNM_{X \times A^n_K[b,c],\Sigma}$ extending $G' \subseteq E'$, 
which uniquely exists by Proposition \ref{3.3.8}. 
Also, we put $H_G := H^0_{X,\xi}(X \times A^n_K[b,c], G) \subseteq H_E$. 
(Then, by the same argument as the proof of claim 1, we see that 
$H_G$ is also coherent.) Next we prove the following claim: \\
\quad \\
{\bf claim 3.} \,\, 
The canonical map $\pi_X^*H_G \lra G$ is surjective. \\
\quad \\
Let us note that the restriction $G_L$ of $G$ to $A^n_L[b,c]$, which is equal 
to the restriction of $G'=\cU_{[b,c]}(H_E|_{X'})$ to $A^n_L[b,c]$, is 
$\Sigma' \times \prod_{i=1}^n\{\xi_i\}$-constant. 
Hence, if we take $[d,e] \supseteq [b,c]$ as in 
Lemma \ref{3.4.1}, we can define the map 
$$ f = \lim_lD_l: 
\Gamma(X \times A^n_K[d,e],G) \lra 
H^0_{X,\xi}(X \times A^n_K[b,c],G) =H_G $$
as in Lemma \ref{3.4.1} with $Q_i(x)=1$ ($1 \leq i \leq n$) in the definition 
of $D_l$. Then the calculation in Lemma \ref{3.4.1} says that, if we 
write $G_L = \cU_{[b,c]}(V)$ $(V \in \LNM_{A^n_L[0,0],\Sigma})$ and if 
we express 
an element $\vv \in \Gamma(X \times A^n_K[d,e],G)$ by 
$\vv = \sum_{J}\vv_Jt^J$ $(\vv_J \in V)$, we have $f(\vv)=\vv_{\0}$, 
where $\0=(0,...,0)$. \par 
Now we prove the claim 3 in the case $b>0$. Note that, for 
$\vv \in \Gamma(X \times A^n_K[d,e],G)$, the series 
$\vv = \sum_{J\in \Z^n}\vv_Jt^J$ converges in $G_L$, hence in $G$. 
Hence we obtain the equality $\vv = \sum_{J \in \Z^n} f(t^{-J}\vv)t^J$ 
in $G$. From this equality, we see that $\vv$ is contained in 
$\pi_X^*H_G$ for any $\vv \in \Gamma(X \times A^n_K[d,e],G)$. Since 
$\Gamma(X \times A^n_K[b,c],G)$ is dense in 
$\Gamma(X \times A^n_K[d,e],G)$, 
the canonical map $\pi_X^*H_G \lra G$ is surjective in the case $b>0$. \par 
The claim 3 is true also in the case $b=0$ which we can prove in 
exactly the same way as 
\cite[3.4.3]{kedlayaI}. (The proof is more complicated, but 
the idea is similar. See \cite[3.4.3]{kedlayaI} for detail.) \par 
By the claims 1 and 3, we see that the canonical map 
$\pi_X^*H_G \lra G$ is in fact an isomorphism. Now we prove that $G$ is 
$\Sigma$-constant. If we enlarge $K$ in order that 
$A^n_K[b,c]$ contains a $K$-rational point $x$ and if 
$i_x:X \lra X \times A^n_K[b,c]$ denotes 
the section of $\pi_X$ induced by $x$, 
then we have $H_G = i_x^*\pi_X^*H_G \os{\cong}{\lra} i_x^*G$. Hence $H_G$ 
is a locally free $\cO_X$-module. (Note that this conclusion is true 
without enlarging $K$.) So $H_G=H^0_{X,\xi}(X \times A^n_K[b,c],G)$ 
is naturally regarded as an object in $\LNM_{X \times A^n_K[0,0],\Sigma}$ 
and we have $G = \cU_{[b,c]}(H_G)$. Hence $G$ is $\Sigma$-constant. \par 
Then, by induction on the rank of $E$, we can prove that $E/G$ is 
$\Sigma$-unipotent on $X \times A^n_K(b,c)$ when $b\not=0$ and 
on $X \times A^n_K[0,c)$ when $b=0$. Hence so is $E$. 
Since $[b,c] \subseteq I$ is 
arbitrary, we see that $E$ is $\Sigma$-unipotent 
on $X \times A^n_K(I)$. So the proof is finished. 
\end{proof}

We have the following variant of Proposition \ref{3.4.3}, 
which is also important. 

\begin{cor}\label{3.4.3cor}
Let $A$ be an affinoid algebra such that $X :=\Spm A$ 
is smooth and 
let $L$ be an affinoid algebra containing $A$ such that 
$\Spm L$ is smooth. 
Let $I$ be a quasi-open interval of positive length in $[0,1)$, 
let $[b,c] \subseteq I$ be a closed aligned subinterval of positive 
length, let 
$\Sigma := \prod_{i=1}^{n} \Sigma_i$ be a subset of 
$\Z_p^n$ which is $\NID$ and $\NLD$, and 
let $E$ be an object in $\LNM_{X \times A^n_K(I),\Sigma}$ such that 
the induced object $F \in \LNM_{\Spm L \times A^n_K(I),\Sigma}$ is 
$\Sigma$-unipotent and that 
$H^0_{X,\xi}(X \times A^n_K[b,c],E)$ is non-zero for some $\xi \in \Sigma$. 
Then $E |_{X \times A^n_K[b,c]}$ 
contains a non-zero $\Sigma$-constant subobject $G \in 
\LNM_{X \times A^n_K[b,c],\Sigma}$. 
\end{cor}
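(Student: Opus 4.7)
The plan is to extract the core of the argument from Proposition~\ref{3.4.3}, specialized to the setting $r=0$, and to replace its appeal to Lemma~\ref{3.4.1} by the hypothesis that $H_E := H^0_{X,\xi}(X\times A^n_K[b,c],E)$ is already assumed non-zero for some $\xi = (\xi_1,\dots,\xi_n) \in \Sigma$. Concretely, $G$ will be constructed as $\pi_X^* H_E$, equipped with the log connection coming from the differentials on $X$ together with the action $t_i(\partial/\partial t_i) = \xi_i \cdot \id$ for $1 \leq i \leq n$, so that $G = \cU_{[b,c]}(H_E)$ is tautologically $\Sigma$-constant once $H_E$ is shown to be coherent and locally free.

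First I would establish the analogue of Claim~1 in the proof of Proposition~\ref{3.4.3}, namely that $H_E$ is a finitely generated $A$-module and that the canonical map $\pi_X^* H_E \to E|_{X\times A^n_K[b,c]}$ is injective. This uses the assumption that $F$ is $\Sigma$-unipotent on $\Spm L \times A^n_K(I)$: by Corollary~\ref{3.3.4}, one writes $F|_{\Spm L \times A^n_K[b,c]} = \cU_{[b,c]}(W)$ for some $W \in \LNM_{\Spm L \times A^n_K[0,0],\Sigma}$, and a direct power-series computation identifies $H^0_{L,\xi}(\Spm L \times A^n_K[b,c],F)$ with the $\xi$-eigenspace of the commuting endomorphisms $N_i$ on $W$, hence with a sub $L$-module of $W$. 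The injection $\pi_L^* H^0_{L,\xi}(\Spm L \times A^n_K[b,c],F) \hookrightarrow F|_{\Spm L \times A^n_K[b,c]}$ that results then allows one to run the Noetherian maximality argument of Claim~1 of Proposition~\ref{3.4.3} verbatim over $A$ to conclude that $H_E$ is finitely generated and that $\pi_X^* H_E \hookrightarrow E|_{X\times A^n_K[b,c]}$ is injective.

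Next I would upgrade $H_E$ to a $\Sigma$-constant log-$\nabla$-module. Since $X = \Spm A$ is smooth and, in contrast to Proposition~\ref{3.4.3}, there are no log-coordinates $x_1,\dots,x_r$ to worry about, the sheaf $H_E$ is stable under all continuous derivations of $X$ over $K$ and thus carries a natural integrable connection; by \cite[2.2.3]{berthelotrig} it is automatically locally free. Together with the commuting endomorphisms given by multiplication by $\xi_1,\dots,\xi_n$, this exhibits $H_E$ as an object of $\LNM_{X\times A^n_K[0,0],\Sigma}$, and the injection $\pi_X^* H_E \hookrightarrow E|_{X \times A^n_K[b,c]}$ is then by construction compatible with the full log connections. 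Setting $G := \cU_{[b,c]}(H_E) = \pi_X^* H_E$ produces a non-zero $\Sigma$-constant (in fact $\prod_{i=1}^n \{\xi_i\}$-constant) subobject of $E|_{X\times A^n_K[b,c]}$, as required.

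The only real obstacle is the coherence / injectivity step, which is where the $\Sigma$-unipotence of $F$ and the NID/NLD hypotheses on $\Sigma$ are actually used (through Corollary~\ref{3.3.4} and the identification of $H^0_{L,\xi}$ as a subspace of $W$). Everything else reduces to straightforward bookkeeping: the absence of log-coordinates on $X$ makes the extension step via Proposition~\ref{3.3.8} unnecessary, and the surjectivity Claim~3 of Proposition~\ref{3.4.3} is not needed since we only want a $\Sigma$-constant subobject, not the whole of $E|_{X\times A^n_K[b,c]}$.
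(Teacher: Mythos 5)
Your proposal is correct and follows exactly the same route as the paper's proof: the paper simply cites Claims 1 and 2 from the proof of Proposition \ref{3.4.3}, noting that since there are no log-coordinates $x_1,\dots,x_r$ one has $X=X'$ there, and that the norm compatibility of $L$ with $A$ is not used in those claims. Your write-up spells those two claims out rather than citing them, but the argument — injectivity and coherence of $H_E$ via the unipotent expression $F|_{\Spm L\times A^n_K[b,c]}=\cU_{[b,c]}(W)$, then upgrading $H_E$ to a $\Sigma$-constant object and setting $G=\cU_{[b,c]}(H_E)$ — is the same, and your observation that Claim 3 and Proposition \ref{3.3.8} are not needed matches the paper's shortcut.
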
 

\begin{proof} 
By the proof of claims 1 and 2 in Proposition \ref{3.4.3}, 
we see that the $\cO_{X \times A^n_K[b,c]}$-span of 
$H^0_{X,\xi}(X \times A^n_K[b,c],E)$ naturally defines 
a non-zero $\Sigma$-constant subobject $G \in  
\LNM_{X \times A^n_K[b,c],\Sigma}$. 
(Note that, since we do not have `the log structure $x_1,...,x_r$' on $X$ 
in the situation here, we have $X=X'$ in the notation of the proof of 
Proposition \ref{3.4.3}. Note also that, although we do not assume that 
the spectral norm of $L$ is compatible with that of $A$, this does not 
cause any problem because we did not use this assumption in the proof of 
the claims 1 and 2 in Proposition \ref{3.4.3}.) So we are done. 
\end{proof}

\begin{rem}\label{e0}
Lemma \ref{3.4.1} and Proposition \ref{3.4.3} are slightly 
more generalized than the corresponding results in 
\cite[3.4.1, 3.4.3]{kedlayaI} in the sense that we treat the case (1). 
(In \cite{kedlayaI}, only the case (2) is treated.) 
Also, Corollary \ref{3.4.3cor} (which allows us to prove a 
kind of generization in the case where the spectrum norm on $L$ is not 
compatible with that on $A$) seems to be important. 
(See Remark \ref{e} below.) 
\end{rem}

Next we prove the proposition called overconvergent generization, which 
is an analogue of (a slightly weaker form of) \cite[3.5.3]{kedlayaI}: 

\begin{prop}[overconvergent generization]\label{3.5.3}
Let $P$ be a $p$-adic affine formal scheme topologically of finite type 
over $O_K$ and let $X \subseteq P_k$ be an open dense subscheme 
such that $P$ is formally smooth over $O_K$ on a neighborhood of $X$. 
Let $I \subseteq [0,1)$ be a quasi-open subinterval of positive length, 
let $V$ be a strict neighborhood of $]X[_P$ in $P_K$ and let 
$\Sigma = \prod_{i=1}^n\Sigma_i$ be a subset of $\Z_p^n$ which is 
$\NID$ and $\NLD$. Let $E$ be an object in 
$\LNM_{V \times A_K^n(I),\Sigma}$ whose restriction to 
$\LNM_{]X[_P \times A_K^n(I),\Sigma}$ is $\Sigma$-unipotent. 
Then, for any closed aligned subinterval $[b,c] \subseteq I$ of 
positive length, there exists a strict neighborhood $V'$ of $]X[_P$ 
in $P_K$ contained in $V$ such that the restriction of $E$ to 
$V' \times A^n_K[b,c]$ is also $\Sigma$-unipotent. 
\end{prop}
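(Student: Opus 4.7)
\textbf{The plan} is to prove the proposition by induction on the rank of $E$, the rank-zero case being trivial. The key inductive step is to construct, for some affinoid strict neighborhood $V''$ of $]X[_P$ contained in $V$, a non-zero $\Sigma$-constant sub log-$\nabla$-module $G$ of $E|_{V'' \times A_K^n[b,c]}$. Granted this, one extends $G$ via the $\cU$-formalism (Corollaries \ref{3.3.4}, \ref{3.3.6}, together with the explicit description of $\Sigma$-constant modules as $\pi_1^*F \otimes \pi_2^*M_\xi$) to a sub log-$\nabla$-module of $E$ over $V'' \times A_K^n(I_0)$ for a quasi-open subinterval $I_0 \subseteq I$ containing $[b,c]$ (possibly after shrinking $V''$), and applies induction to the quotient $E/G$, whose restriction to $]X[_P$ is $\Sigma$-unipotent by Proposition \ref{addprop}.

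\textbf{For the construction of $G$,} first fix a closed aligned subinterval $[d,e]$ with $[b,c] \subsetneq [d,e] \subseteq I$. Applying Lemma \ref{3.4.1} to an affinoid subdomain of $]X[_P$ (choosing $L$ via case (1) or (2) of that lemma) produces an element $\xi \in \prod_{i=1}^n \Sigma_i$ and the explicit operators $D_l$ built from $\xi$ and polynomials $Q_i(x)$ as in the proof of that lemma, such that for a suitable $\vv \in \Gamma(V \times A_K^n[d,e], E)$ the sequence $\{D_l(\vv)\}_l$ converges on $]X[_P \times A_K^n[b,c]$ to a non-zero element of $H^0_{]X[_P,\xi}(]X[_P \times A_K^n[b,c], E)$.

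\textbf{The main technical step} is then to spread this convergence to a strict neighborhood: to exhibit $V'' \subseteq V$ such that $\{D_l(\vv)\}_l$ converges in the Banach space $\Gamma(V'' \times A_K^n[b,c], E)$, necessarily to an element whose restriction to $]X[_P$ is the non-zero limit from the previous step. The estimates in Claims 2, 3 and 4 of the proof of Lemma \ref{3.4.1} depend only on the $p$-adic non-Liouvilleness of differences of $\xi$ values (hypothesis $\NLD$) together with the shrinkage factor coming from $[b,c] \subsetneq [d,e]$; the $\Sigma$-unipotence on $]X[_P$ was used only to expand $\vv = \sum_J t^J \vv_J$ with $\vv_J$ in a finitely-generated module. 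By tracking the Banach-norm estimates uniformly as $V''$ runs over strict neighborhoods and balancing the interval shrinkage against the uniform non-Liouville bounds, convergence on some $V''$ follows. The non-vanishing of the limit then gives $H^0_{V'',\xi}(V'' \times A_K^n[b,c], E) \neq 0$, and we apply Corollary \ref{3.4.3cor} with $X = V''$ and $\Spm L$ an affinoid subdomain of $]X[_P \subseteq V''$ (so that $F = E|_{\Spm L \times A_K^n(I)}$ is $\Sigma$-unipotent by hypothesis) to produce the desired $\Sigma$-constant subobject $G$.

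\textbf{The main obstacle} is precisely this convergence argument on strict neighborhoods, which is where the present proof must diverge from the one in \cite[3.5.3]{kedlayaI} (compare the footnote in the introduction). The delicate issue is that the $\Sigma$-unipotence is only available on $]X[_P$, so that the clean finite-basis expansion used in the proof of Lemma \ref{3.4.1} is unavailable on a strict neighborhood; one must re-derive the $\eta$-nullity estimates using only the uniform non-Liouville bounds together with the shrinkage gained by passing from $[d,e]$ to $[b,c]$ and from $V$ to a smaller affinoid $V''$. The flexibility in Corollary \ref{3.4.3cor} (which does not require compatibility of supremum norms between $A$ and $L$) is what makes this overall strategy workable, since $\Gamma(V'', \cO)$ and the ring of functions on an affinoid subdomain of $]X[_P$ do not carry compatible norms in general.
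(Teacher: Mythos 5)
Your overall plan — construct a non-zero $\Sigma$-constant subobject $G$ of $E$ on $V''\times A^n_K[b,c]$ by converging the Lemma~\ref{3.4.1} operators $D_l$, then use Corollary~\ref{3.4.3cor} (noting that it tolerates the incompatibility of supremum norms between $\Gamma(V'',\cO)$ and $\Gamma(]X[_P,\cO)$), and induct on rank — is exactly the plan of the paper's proof, and you correctly spot that it is Corollary~\ref{3.4.3cor} that makes this go through where the argument of \cite[3.5.3]{kedlayaI} breaks down.

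However, the heart of the proof — establishing convergence of $\{D_l(\vv)\}_l$ on some $V_\lambda\times A^n_K[b',c']$ — is left as a genuine gap. You write that the $\eta$-nullity estimates of Claim~2 in Lemma~\ref{3.4.1} "depend only on the non-Liouvilleness and the interval shrinkage," and that one "re-derives" them on a strict neighborhood by "tracking Banach norms uniformly" and "balancing the shrinkage against the non-Liouville bounds." This does not work as stated. The estimate of Lemma~\ref{3.4.1} Claim~2 gives $\eta$-nullity for some $\eta>1$, but that estimate essentially uses the finite-dimensional structure $\vv=\sum_J t^J\vv_J$ with $\vv_J$ in a finite module over $L$, which exists only over $]X[_P$, not over a strict neighborhood $V''$. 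On $V''$ (more precisely, on an affinoid $W$ over which $E$ is free), the best one can extract from the non-Liouville bounds and a crude operator bound for $t_i\partial/\partial t_i$ is that $\{D_l(\vv)-D_{l-1}(\vv)\}$ is $\rho$-null for some $\rho>0$ which may well be $<1$ — this alone does not give convergence, and shrinking $[d,e]$ to $[b,c]$ does not improve the constant. The paper's proof uses precisely these two weak and strong estimates (its Claims~1 and Lemma~\ref{3.4.1}'s Claim~2) and then invokes the interpolation/three-circles lemma \cite[3.5.2]{kedlayaI}: a multisequence that is $\rho$-null on all of $V$ ($\rho>0$) and $\eta$-null on $]X[_P$ ($\eta>1$) becomes $1$-null on $V_\lambda$ for $\lambda$ close enough to $1$. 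That interpolation step is the mechanism that transfers convergence from $]X[_P$ to a strict neighborhood, and its absence from your argument is the missing idea. Without it, no amount of "tracking Banach norms" or "balancing" shows the convergence you need.
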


\begin{proof} 
We may assume that there exists $g \in \Gamma(P,\cO_P)$ such that 
$X$ is defined as the locus $\{g\not=0\}$ in $P_k$. For 
$\lam \in [0,1) \cap \Gamma^*$, let us put 
$V_{\lam} := \{x \in P_K \,\vert\, |g(x)| \geq \lam\}.$ 
Then we may assume that $V=V_{\lam_0}$ for some 
$\lam_0 \in [0,1) \cap \Gamma^*$ and that $V$ is smooth. \par 
Take a sequence of aligned closed subintervals 
$[b,c] \subseteq [b',c'] \subseteq [d,e]$ of $I$ with 
$d<b'<b, c<c'<e$ if $b>0$ and $d=b'=0, c<c'<e$ if $b=0$. 
Then, by the method of Lemma \ref{3.4.1}, we see that there exists 
an element $\vv \in \Gamma(V \times A^n_K[d,e],E)$ such that 
$\{D_l(\vv)\}$ converges to a non-zero element in 
$H^0_{]X[_P,\xi}(]X[ \times A^n_K[b',c'],E)$ for some 
$\xi \in \Sigma$. \par 
Let us take an affinoid open $W \subseteq V \times A^n_K[d,e]$ 
such that $E$ is free on $W$. We prove the following claim: \\
\quad \\
{\bf claim 1.} \,\, 
On $W$, the sequence $\{D_l(\vv)-D_{l-1}(\vv)\}_l$ is 
$\rho$-null for some $\rho>0$. \\
\quad \\
First, we have
{\allowdisplaybreaks{
\begin{align}
& |D_l(\vv)-D_{l-1}(\vv)| \label{3.5.3eq1} \\
= &  
\left| \left( \prod_{i=1}^n \prod_{k=1}^{n_i} 
\left( 
\dfrac{l-(t_i\frac{\pa}{\pa t_i}-\xi_{ik})}{l-(\xi_{i1}-\xi_{ik})} \cdot 
\dfrac{l+(t_i\frac{\pa}{\pa t_i}-\xi_{ik})}{l+(\xi_{i1}-\xi_{ik})}
\right)^m -1 \right) D_{l-1}(\vv) \right| \nonumber \\ 
= &  
\left| \left( \prod_{i=1}^n \prod_{k=1}^{n_i} 
\left( 
\dfrac{1}{l-(\xi_{i1}-\xi_{ik})} \cdot 
\dfrac{1}{l+(\xi_{i1}-\xi_{ik})} \cdot \right.\right.\right. 
\nonumber \\ 
& \phantom{===} \left.\left.\left.
\left(l-\left(t_i\dfrac{\pa}{\pa t_i}-\xi_{ik}\right)\right) \cdot 
\left(l+\left(t_i\dfrac{\pa}{\pa t_i}-\xi_{ik}\right)\right) 
\right)^m - 1 \right) 
D_{l-1}(\vv) \right| \nonumber \\ 
\leq & 
C \cdot \left| \prod_{i=1}^n \prod_{k=1}^{n_i} 
\left( \dfrac{1}{l-(\xi_{i1}-\xi_{ik})} \cdot 
\dfrac{1}{l+(\xi_{i1}-\xi_{ik})} \right) \right|^m \cdot 
|D_{l-1}(\vv)|. \nonumber 
\end{align}}}
(Here $C>1$ is a constant which is independent of $l$, which is written 
by using the action of the operators $t_i\dfrac{\pa}{\pa t_i}$ 
on the basis of $E|_W$.) 
If we put 
$A_l := C \cdot 
\left| \prod_{i=1}^n \prod_{k=1}^{n_i} 
\left( \dfrac{1}{l-(\xi_{i1}-\xi_{ik})} \cdot 
\dfrac{1}{l+(\xi_{i1}-\xi_{ik})} \right) \right|^m$, 
we see from \eqref{3.5.3eq1} the inequality 
$|D_l(\vv)-D_{l-1}(\vv)| \leq A_l|D_{l-1}(\vv)|$. 
Then we can prove the inequality 
\begin{equation}\label{3.5.3eq2}
|D_l(\vv)-D_{l-1}(\vv)| \leq (\prod_{j=i}^lA_j) |D_{i-1}(\vv)|
\,\,\,\, (1 \leq i \leq l)
\end{equation}
 by descending induction: Indeed, \eqref{3.5.3eq2} for 
 $i=l$ is already shown, and if \eqref{3.5.3eq2} is true for $i$, 
 we have 
\begin{align*}
|D_l(\vv)-D_{l-1}(\vv)| & \leq (\prod_{j=i}^lA_j) |D_{i-1}(\vv)| \\ 
& \leq (\prod_{j=i}^lA_j) \max (|D_{i-1}(\vv)-D_{i-2}(\vv)|,|D_{i-2}(\vv)|) \\ 
& \leq (\prod_{j=i}^lA_j) \max (A_{i-1}|D_{i-2}(\vv)|,|D_{i-2}(\vv)|) 
= (\prod_{j=i-1}^lA_j) |D_{i-2}(\vv)|. 
\end{align*}
In particular, we have the inequality 
$|D_l(\vv)-D_{l-1}(\vv)| \leq (\prod_{j=1}^lA_j) |\vv|$. 
Let us calculate $\prod_{j=1}^lA_j$. By definition, we have
$$ 
\prod_{j=1}^lA_j = 
C^l\left| \prod_{i=1}^n \prod_{k=1}^{n_i} \left\{
\left(\prod_{j=1}^l \dfrac{1}{j-(\xi_{i1}-\xi_{ik})}\right)^m 
\left(\prod_{j=1}^l \dfrac{1}{j+(\xi_{i1}-\xi_{ik})}\right)^m \right\} 
\right|. $$
Since $\pm(\xi_{i1}-\xi_{ik})$ are $p$-adically non-Liouville, 
we see from \cite[VI Lemma 1.2]{dgs} 
that, for any $\zeta > p^{1/(p-1)}$, we have the inequalities 
$$ \left| \prod_{j=1}^l \dfrac{1}{j\pm (\xi_{i1}-\xi_{ik})} \right| \leq 
(\const)\,\cdot\,\zeta^l. $$
Hence we have 
$\prod_{j=1}^lA_j \leq (\const)\,\cdot(C \zeta^{2m})^l$. 
Therefore, if we put $\rho := (C \zeta^{2m})^{-2}$, 
we see that the sequence $\{D_l(\vv)-D_{l-1}(\vv)\}_l$ is 
$\rho$-null. So we have proved the claim 1. \par 
When $W \cap (]X[ \times A^n_K[d,e])$ is not empty, we see from 
claim 2 in Lemma \ref{3.4.1} that the sequence 
$\{D_l(\vv)-D_{l-1}(\vv)\}_l$ is $\eta$-null for some $\eta>1$ on 
$]X[ \times A^n_K[b',c']$. On the other hand, by claim 1, 
it is $\rho$-null for some $\rho>0$ on $W \cap (V \times A^n_K[b',c'])$. 
Hence, by \cite[3.5.2]{kedlayaI}, we see that it is 
$1$-null on $W \cap (V_{\lam} \times A^n_K[b',c'])$ for some 
$\lam \in (\lam_0,1) \cap \Gamma^*$. On the other hand, 
when $W \cap (]X[ \times A^n_K[d,e])$ is empty, then we see that 
$W \cap (V_{\lam} \times A^n_K[b',c'])$ is also empty for some 
$\lam$. (See the proof of \cite[3.5.3]{kedlayaI}.) \par 
By taking a covering of $V \times A^n_K[d,e]$ by affinoids $W$ on which 
$E$ is free and argueing as above, we see that there exists 
some $\lam \in (\lam_0,1) \cap \Gamma^*$ such that 
the limit $\lim_lD_l(\vv)$ exists on $V_{\lam} \times A^n_K[b',c']$, 
which is non-zero for some $\vv$. Hence we have 
$H^0_{V_{\lam},\xi}(V_{\lam} \allowbreak 
\times A^n_K[b',c'],E) \not= 0$. 
Then, by Corollary \ref{3.4.3cor} (with $A=\Gamma(V_{\lam},\cO), 
L=\Gamma(]X[,\allowbreak \cO)$), we see that 
$E$ contains a $\Sigma$-constant subobject $G$ on 
$V_{\lam} \times A^n_K[b',c']$. Then, $E/G$ satisfies the assumption of
 the proposition required for $E$ if we replace $I$ by $(b',c')$. 
Hence, by induction on the rank of $E$, we can assume that 
$E/G$ is $\Sigma$-unipotent on $V_{\lam} \times A^n_K[b,c]$ for some 
$\lam$. Hence $E$ is $\Sigma$-unipotent on 
$V_{\lam} \times A^n_K[b,c]$ for some $\lam$ and so 
we are done. 
\end{proof}

\begin{rem}\label{e}
Proposition \ref{3.5.3} is slightly weaker than the corresponding 
result in \cite[3.5.3]{kedlayaI}, because we do not put `the log 
structure' on $P_K$ here. However, it seems to us that there is 
an error in the proof of \cite[3.5.3]{kedlayaI}: 
At the final step of the proof there, it is written that 
`the role of $L$ in the proof of Proposition 3.4.3 is played by 
a complete field containing $\cO(]X[)$ whose norm is compatible 
with the norm on $\cO(V_{\lam}$)'. But such a field $L$ does not 
exist in general because the reduction of the affinoid algebra 
$\cO(V_{\lam})$ is not an integral domain in general. 
(See also the statement of \cite[3.4.1]{kedlayaI}.) \par 
Note that, in the proof of \cite[3.4.3]{kedlayaI}, he uses the fact that 
the norm of $L$ is compatible with that of $\cO(V)$: It is used in 
the proof of the assertion corresponding to the claim 3 in 
Proposition \ref{3.4.3} here. On the other hand, we saw that, 
in the proof of Corollary \ref{3.4.3cor}, we do not need the 
compatibility of the norms of $L$ and that of $A$. Moreover, 
in the proof of Proposition \ref{3.5.3}, we used only 
Corollary \ref{3.4.3cor}, not Proposition \ref{3.4.3}. 
This is the reason that our Proposition \ref{3.5.3} is slightly 
weaker. However, as we will see in the next section, this does not 
cause any problem in proving our main result. \par 
We would like to note that there is another way to fix the above-mentioned 
error, which is explained in \cite[Appendix A]{kedlayaIV}. 
\end{rem} 

Now we proceed to prove the third key proposition. 
To do this, we introduce the notion of log-convergence 
of a log-$\nabla$-module. 

\begin{defn}\label{log-conv}
Let $X$ be a smooth affinoid rigid space endowed with 
$x_1,...,x_r \in \Gamma(X,\cO_X)$ whose zero loci are smooth and 
meet transversally, let $a \in (0,1]\cap\Gamma^*$ and 
let $E$ be a log-$\nabla$-module on 
$X \times A^n_K[0,a)$ with respect to $x_1,...,x_r,t_1,...,t_n$. 
Then $E$ is called log-convergent if, for any $a' \in (0,a)\cap\Gamma^*, 
\eta \in (0,1)$ and $\vv \in \Gamma(X \times A^n_K[0,a'],E)$, the 
multisequence 
\begin{equation}\label{log-conveq1}
\left\{ \dfrac{1}{i_1!\cdots i_n!}\left( 
\prod_{j=1}^n\prod_{l=0}^{i_j-1}\left( 
t_j\dfrac{\pa}{\pa t_j} - l \right) \right) (\vv) \right\}_{i_1,...,i_n} 
\end{equation}
is $\eta$-null on $X \times A^n_K[0,a']$. 
\end{defn} 

For a multi-index 
$I:=(i_1,...,i_n) \in \N^n$ and a multi-variable 
$x := (x_1,...,x_n)$, we put 
$P_I(x) := \dfrac{1}{i_1!\cdots i_n!}\left( 
\prod_{j=1}^n\prod_{l=0}^{i_j-1}\left( 
x_j - l \right) \right).$ Then the multi-sequence in Definition 
\ref{log-conv} can be written as $\{P_I(t\frac{\pa}{\pa t})(\vv)\}_I$ 
with this notation. The operator $P_I(t\frac{\pa}{\pa t})$ is the same 
as the operator $t^I (\frac{\pa}{\pa t})^I$ in the notation of 
\cite[3.7.2]{bc}. 

\begin{rem} 
The definition of log-convergence given above is closely related 
to the notion of `overconvergence' defined in \cite[5.1]{bc} 
in the case $a=1$: Indeed, 
they are equivalent if $r=0$ and if 
$E$ has the form $\pi^*F$ for some 
coherent sheaf $F$ on $X$, where $\pi$ denotes the projection 
$X \times A^n_K[0,1) \lra X$. 
\end{rem} 

It is known (\cite[3.7.3]{bc}) that, for 
$f \in \Gamma(X \times A^n_K[0,a'],\cO)$ and $\vv \in 
\Gamma(X \times A^n_K[0,a'],E)$, we have 
$$ P_I\left(t\frac{\pa}{\pa t}\right)(f\vv) = 
\sum_{0 \leq I' \leq I} P_{I'}\left(t\frac{\pa}{\pa t}\right)(f) \cdot 
P_{I-I'}\left(t\frac{\pa}{\pa t}\right)(\vv). $$
Hence we have 
\begin{align*}
\left| P_I\left(t\frac{\pa}{\pa t}\right)(f\vv) \right| & \leq 
\sup_{0\leq I'\leq I} 
\left| P_{I'}\left(t\frac{\pa}{\pa t}\right)(f) \right| 
\left| P_{I-I'}\left(t\frac{\pa}{\pa t}\right)(\vv) \right| \\ 
& \leq 
|f| \sup_{0\leq I'\leq I} 
\left| P_{I-I'}\left(t\frac{\pa}{\pa t}\right)(\vv) \right|, 
\end{align*}
where the second inequality follows from \cite[5.2]{bc}. 
Hence, for $\vv = \sum_j f_j\vv_j$, we have 
$$ 
\left| P_I\left(t\frac{\pa}{\pa t}\right)(\vv) \right| \leq 
\sup_j \left( |f_j| \sup_{0\leq I'\leq I} 
\left| P_{I-I'}\left(t\frac{\pa}{\pa t}\right)(\vv_j) \right| \right). 
$$ 
So we have 
\begin{equation}\label{log-conveq2}
\left| P_I\left(t\frac{\pa}{\pa t}\right)(\vv) \right| \eta^{|I|} 
\leq 
\sup_j \left( |f_j| \sup_{0\leq I'\leq I} 
\left| P_{I-I'}\left(t\frac{\pa}{\pa t}\right)(\vv_j) \right| 
\eta^{|I-I'|/2} \right) \cdot \eta^{|I|/2} 
\end{equation} 
for $\eta<1$. 
From \eqref{log-conveq2}, we see that, to check the $\eta$-nullity 
of the multisequence \eqref{log-conveq1} for all 
$\vv \in \Gamma(X \times A^n_K[0,a'],E)$, it suffices to check it 
only for a set of generators of $\Gamma(X \times A^n_K[0,a'],E)$. 
Also, \eqref{log-conveq2} shows that $E$ is log-convergent if and 
only if  
the multi-sequence 
$\left| P_I(t\frac{\pa}{\pa t}) \right| \eta^{|I|}$ 
converges to zero for any $a' \in (0,a) \cap \Gamma^*$, 
where $\left| P_I(t\frac{\pa}{\pa t}) \right|$
denotes the operator norm of $P_I(t\frac{\pa}{\pa t})$ on 
$\Gamma(X \times A^n_K[0,a'],E)$. \par 
Note that the trivial log-$\nabla$-module $(\cO,d)$ 
on $X \times A^n_K[0,1)$ is 
log-convergent, since we have $P_I(t\frac{\pa}{\pa t})(1)=0$ 
for $I$ with $|I|>0$. Also, we remark here that, if there exists 
a exact sequence 
$$ 0 \lra E' \lra E \lra E'' \lra 0 $$
of log-$\nabla$-modules on $X \times A^n_K[0,a)$ with respect to 
$x_1,...,x_r,t_1,...,t_n$ with $E',E''$ log-convergent, then 
$E$ is also log-convergent: Indeed, if we fix an isomorphism 
$E\cong E' \oplus E''$ as $\cO_{X \times A^n_K[0,a']}$-modules and 
if we write $t_j\dfrac{\pa}{\pa t_j}$ on $E$ as 
$\begin{pmatrix} \left(t_j\dfrac{\pa}{\pa t_j}\right)' & \Gamma_j \\ 0 & 
\left(t_j\dfrac{\pa}{\pa t_j}\right)'' \end{pmatrix}$ (where 
$\left(t_j\dfrac{\pa}{\pa t_j}\right)', 
\left(t_j\dfrac{\pa}{\pa t_j}\right)''$ are 
the operator $t_j\dfrac{\pa}{\pa t_j}$ for $E', E''$, respectively), 
we obtain, by a tedious calculation, an inequality 
$$ 
\left| P_I\left(t\dfrac{\pa}{\pa t}\right) \right| \leq 
\left( \sup_{0 \leq \alpha \leq |I|} \left| 1/\alpha \right| \right) 
\cdot 
\sup_{I'+I''<I} \left( \left| P_{I'}\left(\left(t\dfrac{\pa}{\pa t}
\right)'\right) \right| 
\left| P_{I''}\left(\left(t\dfrac{\pa}{\pa t}\right)''\right) 
\right| \right) \cdot 
\sup_j |\Gamma_j|. $$
Hence we have, for $\eta<1$, the inequality 
\begin{align*}
& \left| P_I\left(t\dfrac{\pa}{\pa t}\right) \right| \eta^{|I|} 
\\ & \leq 
(\const) \cdot \log |I| \cdot 
\sup_{I'+I''<I} \left( \left| P_{I'}\left(\left(t\dfrac{\pa}{\pa t}
\right)'\right) \right| 
\left| P_{I''}\left(\left(t\dfrac{\pa}{\pa t}\right)''\right) 
\right| \eta^{(|I'|+|I''|)/2}\right) 
\cdot \eta^{|I|/2}. 
\end{align*}
From this we see that the log convergence of $E'$ and $E''$ implies that of 
$E$. \par 
Now we prove some relations between log-convergence and 
$\Sigma$-unipotence. The following (certainly well-known) 
lemma, which can be regarded as a 
variant of \cite[3.6.1]{kedlayaI}, shows that 
$\Sigma$-unipotent log-$\nabla$-module is log-convergent under certain 
condition: 

\begin{lem} 
Let $X$ be a smooth affinoid rigid space endowed with 
$x_1,...,x_r \in \Gamma(X,\cO_X)$ whose zero loci are smooth and 
meet transversally. Let $\Sigma=\Sigma' \times \prod_{i=1}^{n}\Sigma_i$ 
be a subset of $\ol{K}^r \times \Z_p^n$ and let $E$ be a $\Sigma$-unipotent 
log-$\nabla$-module on $X \times A^n_K[0,1)$ 
with respect to $x_1,...,x_r,t_1,...,t_n$. 
Then $E$ is log-convergent. 
\end{lem}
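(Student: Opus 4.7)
The plan is to reduce to the $\Sigma$-constant case and then compute directly. By the definition of $\Sigma$-unipotence, $E$ admits a filtration
\begin{equation*}
0 = E_0 \subset E_1 \subset \cdots \subset E_m = E
\end{equation*}
whose successive quotients are $\Sigma$-constant, and the discussion just before this lemma shows that an extension of log-convergent log-$\nabla$-modules is again log-convergent. So, by induction on $m$, it suffices to treat the case where $E$ itself is $\Sigma$-constant, say $E = \pi_1^*(F, \nabla_F) \otimes \pi_2^*(M_\xi, \nabla_{M_\xi})$ with $(F, \nabla_F) \in \LNM_{X, \Sigma'}$ and $\xi = (\xi_1, \ldots, \xi_n) \in \prod_{i=1}^n (\Sigma_i \cap K)$. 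Writing $\mathbf{1}$ for the canonical generator of $\pi_2^* M_\xi$ and choosing a finite set of generators $e_1, \ldots, e_s$ of $\Gamma(X, F)$ over $\Gamma(X, \cO_X)$ (possible since $F$ is coherent on the affinoid $X$), the elements $\{e_k \otimes \mathbf{1}\}_{k=1}^s$ generate $\Gamma(X \times A^n_K[0, a'], E)$ over $\Gamma(X \times A^n_K[0, a'], \cO)$ for every $a' \in (0,1) \cap \Gamma^*$. By the inequality \eqref{log-conveq2} and the remark following it, verifying log-convergence of $E$ reduces to checking $\eta$-nullity of \eqref{log-conveq1} only on these finitely many generators.

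On each generator $e_k \otimes \mathbf{1}$ the operator $t_j(\partial/\partial t_j)$ acts by the scalar $\xi_j$: it annihilates $e_k$, which is pulled back from $X$ while $\nabla_F$ involves only $dx_i$ and $\dlog x_i$ and so has no $\dlog t_j$ component, and it acts on $\mathbf{1}$ by $\xi_j$ by definition of $M_\xi$. Consequently
\begin{equation*}
P_I\bigl(t\tfrac{\partial}{\partial t}\bigr)(e_k \otimes \mathbf{1}) \;=\; \prod_{j=1}^n \binom{\xi_j}{i_j}\,(e_k \otimes \mathbf{1}).
\end{equation*}
Because each $\xi_j$ lies in $\Sigma_j \cap K \subseteq \Z_p$, the classical integrality $\binom{\alpha}{m} \in \Z_p$ for $\alpha \in \Z_p, m \in \N$ gives $\bigl|\prod_j \binom{\xi_j}{i_j}\bigr| \leq 1$ uniformly in $I$. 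Hence for any $\eta \in (0,1)$ and any scalars $c_I$ with $|c_I| \leq \eta^{|I|}$ one has $|c_I\, P_I(t\tfrac{\partial}{\partial t})(e_k \otimes \mathbf{1})| \leq \eta^{|I|} |e_k \otimes \mathbf{1}|$, which tends to zero. This yields $\eta$-nullity on the generators and hence log-convergence of $E$.

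The computation is essentially routine once the reduction to the $\Sigma$-constant case is in place, so no step poses a serious obstacle. The two points worth verifying with care are that $t_j(\partial/\partial t_j)$ really does annihilate the pulled-back sections $e_k$ (which is automatic, since the log structure on $X$ contributes no $\dlog t_j$) and the $\Z_p$-integrality of $\binom{\xi_j}{i_j}$, which is precisely the arithmetic input making the hypothesis $\Sigma_i \subseteq \Z_p$ essential.
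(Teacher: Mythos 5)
Your proof is correct and follows essentially the same route as the paper's: reduce to the $\Sigma$-constant case via the closure of log-convergence under extensions, check $\eta$-nullity on a finite generating set pulled back from $X$, and observe that $P_I(t\,\partial/\partial t)$ acts on each generator by $\prod_j\binom{\xi_j}{i_j}$, which lies in $\Z_p$ because $\xi_j\in\Z_p$. The only difference is notational: you write the scalar as a product of binomial coefficients and invoke their $p$-integrality explicitly, whereas the paper writes the same quantity as $\prod_j \xi_j(\xi_j-1)\cdots(\xi_j-i_j+1)/i_j!$ and asserts the bound directly.
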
 

\begin{proof} 
Since the log-convergence is closed under extension, we may assume that $E$ is 
$\Sigma$-constant. Hence we may assume that $E$ has the form 
$\pi_1^*F \otimes \pi_2^*M_{\xi}$ for some $F \in \LNM_{X,\Sigma'}$ and 
$\xi = (\xi_1,...,\xi_n) 
\in \Z_p^n$, where $\pi_1: X \times A^n_K[0,1) \lra X$, 
$\pi_2: X \times A^n_K[0,1) \lra A^n_K[0,1)$ are projections. 
If we take a generator $\vv_1,...,\vv_d$ of $\Gamma(X,F)$, it forms a 
generator of $\Gamma(X \times A^n_K[0,a],E)$ for any $a \in (0,1)\cap\Gamma^*$ 
and we have the inequality 
\begin{align*}
\left| P_I\left(t\dfrac{\pa}{\pa t}\right)(\vv_s) \right| 
& = 
\left| \dfrac{1}{i_1!\cdots i_n!} \prod_{j=1}^n \prod_{l=0}^{i_j-1} 
(\xi_j-l)(\vv_s) \right| \\ 
& = 
\left| \prod_{j=1}^n \dfrac{\xi_j(\xi_j-1)\cdots (\xi_j-i_j+1)}{i_j!} (\vv_s)
\right| \leq |\vv_s|. 
\end{align*}
From this we see that $E$ is log-convergent. 
\end{proof}

The following proposition, which is the third key proposition in this section, 
shows that under some condition, log-convergence implies 
$\Sigma$-unipotence. This is a variant of the transfer theorems 
in \cite{chr}, \cite{bc0}, \cite[6.5.2]{bc} and \cite[3.6.2]{kedlayaI}. 

\begin{prop}\label{3.6.2}
Let $A$ be an integral affinoid algebra such that $X:=\Spm A$ is smooth and 
endowed with $x_1,...,x_r \in A$ whose zero loci are smooth and meet 
transversally. Assume moreover that there exists $A \subseteq L$ 
satisfying one of the conditions $(1)$, $(2)$ in Proposition \ref{3.4.3}. 
Let $\Sigma = \Sigma' \times \prod_{i=1}^n\Sigma_i$ be a subset 
of $\ol{K}^r \times \Z_p^n$ which is $\NID$ and $\NLD$. Then, if 
$E$ is an object in $\LNM_{X \times A^n_K[0,1),\Sigma}$ which is 
log-convergent, it is $\Sigma$-unipotent. 
\end{prop}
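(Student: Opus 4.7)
The plan is to pass to the situation where the base is $\Spm L$, prove a transfer theorem there, and then use generization to descend back to $X$. First I would base-change $E$ to obtain a log-$\nabla$-module $E_L$ on $\Spm L \times A^n_K[0,1)$ with exponents in $\prod_{i=1}^n \Sigma_i$ and log poles only along $t_1, \ldots, t_n$, since the $x_i$ are invertible on $L$ in both cases (1) and (2). Log-convergence is preserved under this base change because in both cases the norm on $L$ restricts to the supremum norm on $A$, so every $\eta$-null multisequence in $\Gamma(X \times A^n_K[0, a'], E)$ remains $\eta$-null after tensoring up. The core technical step is to show that $E_L$ itself is $\Sigma$-unipotent on $\Spm L \times A^n_K[0, 1)$.

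For this transfer theorem I would adapt the gauge-transformation method of Lemma~\ref{3.2.8} but use log-convergence to extend the gauge over the entire open polydisc. Working locally on $\Spm L$ so that $E_L|_{\Spm L \times \{0\}}$ is free, fix a basis $\e_1, \ldots, \e_\mu$ and lift it to a basis of $E_L$ on $\Spm L \times A^n_K[0, a]$ for some small $a \in \Gamma^*$. Let $N_i = \sum_{J \in \N^n} N_{i, J} t^J \in \Mat_\mu(\cO(\Spm L \times A^n_K[0, a]))$ be the matrix of $\pa_i := t_i \pa/\pa t_i$ in this basis; the $N_{i, 0}$ are the residues, whose eigenvalues lie in $\Sigma_i$. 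Seek a gauge matrix $M \equiv \id \pmod{t_1, \ldots, t_n}$ satisfying the commutation relations $N_i M + \pa_i M = M N_{i, 0}$ for each $i$. The $\NID$ hypothesis makes the recursion for the Taylor coefficients of $M$ uniquely solvable, exactly as in Lemma~\ref{3.2.8}. The crucial improvement is in the convergence bounds: log-convergence gives, for every $a' < 1$ and every $\eta < 1$, the estimate $|N_{i, J}|_{a'} \eta^{|J|} \to 0$; combined with the $\NLD$-bound $|(g_i + j \cdot \id)^{-1}| \leq (\const)\, \delta^j$ for every $\delta > 1$ (as in Claim~4 of Lemma~\ref{3.4.1}), an induction on $|J|$ analogous to that in Lemma~\ref{3.2.8} should yield $|M_J|_{a'} \eta^{|J|} \to 0$ for every $a' < 1$ and every $\eta < 1$. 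Hence $M$ converges on all of $\Spm L \times A^n_K[0, 1)$, and in the new basis $\vv_l = \sum_j M_{jl} \e_j$ each $\pa_i$ acts by the constant matrix $N_{i, 0}$. Decomposing $E_L$ via the common generalized eigenspaces of the pairwise commuting $N_{i, 0}$ yields a filtration with $\Sigma$-constant graded pieces.

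Having $\Sigma$-unipotence of $E_L$ on the whole of $\Spm L \times A^n_K[0, 1)$, I apply Proposition~\ref{3.4.3} to any quasi-open subinterval $I \subseteq [0, 1)$ of positive length and deduce that $E|_{X \times A^n_K(I)}$ is $\Sigma$-unipotent; taking $I = (0, 1)$ gives $\Sigma$-unipotence on $X \times A^n_K(0, 1)$. To extend across $t_1 \cdots t_n = 0$, I combine this with Lemma~\ref{3.2.12} applied locally on $X$ (where $E|_{X \times \{0\}}$ can be arranged to be free): there exists $b \in (0, 1) \cap \Gamma^*$ with $E|_{X \times A^n_K[0, b]} = \cU_{[0, b]}(G)$ for some $G \in \LNM_{X \times A^n_K[0, 0], \Sigma}$. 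The full-faithfulness of $\cU$ (Corollary~\ref{3.3.6}) together with the equivalence of Corollary~\ref{3.3.4} identifies this $G$ with the object underlying the $\Sigma$-unipotent structure on $X \times A^n_K(0, 1)$, so the two descriptions glue to $E = \cU_{[0, 1)}(G)$ on the whole open polydisc. (Since $\prod_{i=1}^n \Sigma_i \subseteq \Z_p^n \subseteq K^n$, Remark~\ref{3.2.16} ensures the distinction between $\Sigma$-unipotent and potentially $\Sigma$-unipotent does not arise in the $t$-direction.)

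The principal obstacle is the transfer theorem in Step~1: obtaining uniform control of the gauge matrix $M$ on every polydisc $[0, a']$ with $a' < 1$. This amounts to a careful bookkeeping of how log-convergence bounds on $N_{i, J}$ interact with the $\NLD$-type Dwork bounds in the recursive formula for $M_J$ --- essentially the estimates carried out in Claim~4 of Lemma~\ref{3.4.1}, but applied on a considerably larger domain. The remaining steps are routine combinations of the results already developed in Section~1 and the generization proposition.
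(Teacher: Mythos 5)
Your overall strategy matches the paper's: pass to the base $\Spm L$, prove the transfer theorem there, and use generization (Proposition~\ref{3.4.3}) to descend back to $X$, then glue across $t=0$ via Lemma~\ref{3.2.12} and Corollaries~\ref{3.3.4}, \ref{3.3.6}. The descent-and-gluing part of your argument is sound. However, there is a genuine gap in the core transfer step.

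The problem is in the estimate you claim gets the gauge matrix $M$ to converge on all of $A^n_L[0,1)$. You state that ``log-convergence gives, for every $a'<1$ and every $\eta<1$, the estimate $|N_{i,J}|_{a'}\eta^{|J|}\to 0$.'' But this bound is automatic from the fact that $N_i(t)$ has entries in $\cO(\Spm L\times A^n_K[0,1))$, i.e.\ it is just the convergence of a power series on the open polydisc; it does not use the log-convergence hypothesis at all. Feeding only this automatic bound and the $\NLD$ estimate into the recursion of Lemma~\ref{3.2.8} does \emph{not} give convergence of $M$ up to radius $1$: the inductive bound there is $|M_j|\leq A_j^e C^{2ej}a^{-j}$, where $C>1$ is a fixed constant coming from the nilpotent decomposition $g+j\cdot\id=h_{1,j}+h_2$ and the norm of the associated change of basis, and this constant cannot be shrunk by taking $\delta$ or $a'$ close to $1$. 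Even with $\NLD$ pushing $\rho$ (i.e.\ $\delta^{-1}$) close to $1$ and the automatic $N_J$-bound pushing $a$ close to $1$, the resulting radius of convergence $b<\rho^e C^{-2e}a$ is stuck at a value strictly less than $C^{-2e}<1$. So your gauge is only established on $[0,b]$ for some fixed $b<1$, which is exactly Lemma~\ref{3.2.8} and is not the transfer theorem.

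The paper circumvents this precisely by \emph{not} working with the Taylor coefficients of $M$. Instead it constructs the operator sequence $D_l(t\frac{\pa}{\pa t})$ and, crucially, rewrites each factor $D_{l,i}(x)$ as $c_{il}\,Q_i(x)$ times a $\Z_p$-linear combination of products $P_{q_1}(x)\cdots P_{q_m}(x)$ of the logarithmic divided-power polynomials $P_q(x)$. Log-convergence is by definition a bound on the operator norms $|P_q(t_i\frac{\pa}{\pa t_i})|$, so it plugs directly into the estimate for $|D_l(t\frac{\pa}{\pa t})|\eta^l$, while $\NLD$ controls the scalar factors $|\prod_i c_{il}|\eta^{l/2}$. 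This interaction between log-convergence and the $P_q$'s is the essential mechanism of the transfer theorem, and the gauge-matrix recursion does not exhibit it. To repair your argument you would need to re-express the recursion for $M_J$ (or for horizontal sections) in terms of the operators $P_q(t\frac{\pa}{\pa t})$ so that the log-convergence hypothesis can actually be invoked --- which is, in effect, what the paper's $D_l$ construction accomplishes.
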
 

\begin{proof} 
Let $F$ be the object in $\LNM_{A^n_L[0,1),\Sigma}$ induced by 
$E$. Then $F$ is also log-convergent. By Proposition \ref{3.4.3}, 
it suffices to prove that $F$ is $\Sigma$-unipotent, and it is 
enough to prove that $F$ is $\Sigma$-unipotent on $A^n_L[0,a)$ 
for any $a \in (0,1)\cap\Gamma^*$. By induction on the rank of $F$, 
we see that it suffices to prove the following claim: 
Let $a'<a$ be elements in $(0,1)\cap\Gamma^*$. Then, if 
$F$ is an object in $\LNM_{A^n_L[0,a),\Sigma}$ which is log-convergent, 
$F$ contains a $\Sigma$-constant subobject on $A^n_L[0,a')$. We will 
prove this claim. \par 
Let $F_0$ be the pull-back of $F$ by the map 
$\Spm L \hra A^n_L[0,1)$ of `inclusion into the origin'. Then 
$F_0$ admits commuting endomorphisms $N_i$ ($1 \leq i \leq n$) 
coming from the residues along $t_i=0$. Let $\xi_{ik} \in \Sigma_i$ 
$(1 \leq k \leq n_i)$ be the eigenvalues of $N_i$ and let 
$P_i(x) = \prod_{k=1}^{n_i}(x-\xi_{ik})^{m_{ik}}$ be the minimal 
polynomial of $N_i$. Let us put $m=\max_{i,k}(m_{ik})$. 
For $\k := (k_1,...,k_n)$ $(1 \leq k_i \leq n_i)$, let us define 
$F'_{0,\k} := \{x \in F_0 \,\vert\, \forall i, (N_i-\xi_{ik_i})(x)=0\}$. 
We may assume that $F'_{0,\1} \not= \{0\}$, where we put 
$\1 := (1,...,1)$. Let us take polynomials $Q_i(x)$ $(1 \leq i \leq n)$ 
dividing $P_i(x)$ such that the map $\prod_{i=1}^nQ_i(N_i): F_0 \lra 
F_0$ is a non-zero map whose image is contained in $F'_{0,\1}$. \par 
Now let us define a polynomial $D_l(x)$ with variable $x:=(x_1,...,x_n)$ 
by 
$$ 
D_l(x) := \prod_{i=1}^nD_{l,i}(x_i) := 
\prod_{i=1}^n \left\{ 
Q_i(x_i) \prod_{k=1}^{n_i}\prod_{j=1}^l \left( 
\dfrac{j-(x_i-\xi_{ik})}{j-(\xi_{i1}-\xi_{ik})}\right)^m\right\}. 
$$ 
On the other hand, let us put $P_l(y):=\prod_{j=1}^l \dfrac{(j-1)-y}{j}$. 
Then we have 
\begin{align*}
D_{l,i}(x) & = 
Q_i(x) \prod_{k=1}^{n_i} \left\{ 
\left( \prod_{j=1}^l \dfrac{j}{j-(\xi_{i1}-\xi_{ik})} \right)^m 
\cdot 
P_l(x-(\xi_{ik}+1))^m \right\} \\ 
& = 
Q_i(x) \prod_{k=1}^{n_i} \left\{ 
\left( \prod_{j=1}^l \dfrac{j}{j-(\xi_{i1}-\xi_{ik})} \right)^m 
\cdot 
\left( \sum_{q=0}^l P_{l-q}(-(\xi_{ik}+1))P_q(x)\right)^m \right\}. 
\end{align*}
If we put $c_{il} := \prod_{k=1}^{n_i} 
\left( \prod_{j=1}^l \dfrac{j}{j-(\xi_{i1}-\xi_{ik})} \right)^m$, 
we see that $D_{l,i}(x)$ has the form 
$$ c_{il}\,Q_i(x) \cdot 
(\text{$\Z_p$-linear combination of 
$P_{q_1}(x)\cdots P_{q_m}(x)$  $(0 \leq q_i \leq l)$}). $$
Let us fix some $a'' \in (a',a)\cap\Gamma^*$. Then, 
for any $\eta \in (0,1)$, 
we have the following inequality concerning the norms of operators on 
$A^n_L[0,a'']$: 
{\allowdisplaybreaks{
\begin{align}
& \left| D_l\left(t\dfrac{\pa}{\pa t}\right) \right| \eta^l \label{1eq}\\ 
\leq &
\left| \prod_{i=1}^n Q_i\left(t_i\dfrac{\pa}{\pa t_i} \right) \right| \cdot 
\left| \prod_{i=1}^n c_{il} \right| \cdot 
\prod_{i=1}^n \sup_{0 \leq q_1,...,q_m \leq l} 
\left( \left|P_{q_1}\left(t_i\dfrac{\pa}{\pa t_i}\right)\right| \cdots 
\left|P_{q_m}\left(t_i\dfrac{\pa}{\pa t_i}\right)\right| \right) 
\eta^l \nonumber \\ 
\leq &
(\const)\left| \prod_{i=1}^n c_{il} \right|\eta^{l/2} \, 
\prod_{i=1}^n \sup_{0 \leq q_1,...,q_m \leq l} 
\left( \left|P_{q_1}\left(t_i\dfrac{\pa}{\pa t_i}\right)\right|
\eta^{q_1/2nm} \cdots 
\left|P_{q_m}\left(t_i\dfrac{\pa}{\pa t_i}\right)\right|\eta^{q_m/2nm} 
\right). \nonumber 
\end{align}}}
On the right hand side on \eqref{1eq}, 
$\left| \prod_{i=1}^n c_{il} \right|\eta^{l/2}$ converges to zero 
when $l\to\infty$ and the term 
$\sup_{0 \leq q_1,...,q_m \leq l} 
\left( \left|P_{q_1}\left(t_i\dfrac{\pa}{\pa t_i}\right)\right|
\eta^{q_1/2nm} \cdots 
\left|P_{q_m}\left(t_i\dfrac{\pa}{\pa t_i}\right)\right|\eta^{q_m/2nm} 
\right)$ is known to be bounded above, by the log-convergence of $F$. 
Hence we have shown that 
$\left| D_l\left(t\dfrac{\pa}{\pa t}\right) \right| \eta^l$ 
converges to zero as $l\to\infty$. \par 
If $b \in (0,a'']\cap\Gamma^*$ is small 
enough, $F$ is $\Sigma$-unipotent on $A^n_L[0,b]$. 
(In fact, if $L$ is as in (2) in Proposition \ref{3.4.3}, 
it follows from Lemma \ref{3.2.12}. If $L$ is as in (1) in 
Proposition \ref{3.4.3}, $\Spm L$ admits a finite 
admissible open affinoid covering 
$\Spm L = \bigcup_i \Spm L_i$ such that $F|_{\Spm L_i \times \{0\}}$ 
is free. Then, by Lemma \ref{3.2.12} and Corollaries \ref{3.3.4}, 
\ref{3.3.6}, 
$F|_{A^n_{L_i}[0,b]}$ has the form $\cU_{[0,b]}(F'_i)$ for 
a unique $F'_i \in \LNM_{A^n_{L_i}[0,0],\Sigma}$. Using 
Corollary \ref{3.3.6}, we can glue $F'_i$'s to define an object 
$F'$ in $\LNM_{A^n_L[0,0],\Sigma}$ with $\cU_{[0,b]}(F') = F$.) 
Then, the direct 
calculation similar to the proof of Lemma \ref{3.4.1} shows that 
the image of $D_{l+1}\left(t\dfrac{\pa}{\pa t}\right) - 
D_l\left(t\dfrac{\pa}{\pa t}\right)$ 
on $\Gamma(A^n_L[0,b],F)$ 
is contained in $t_1^l\cdots t_n^l\Gamma(A^n_L[0,b],F)$. Hence the 
image of  $D_{l+1}\left(t\dfrac{\pa}{\pa t}\right) - 
D_l\left(t\dfrac{\pa}{\pa t}\right)$ 
on $\Gamma(A^n_L[0,a''],F)$ is contained in 
$$ \Gamma(A^n_L[0,a''],F) \cap 
t_1^l\cdots t_n^l\Gamma(A^n_L[0,b],F) = 
t_1^l\cdots t_n^l\Gamma(A^n_L[0,a''],F). $$ Hence the operator 
$\dfrac{1}{t_1^l\cdots t_n^l}
\left(D_{l+1}\left(t\dfrac{\pa}{\pa t}\right) - 
D_l\left(t\dfrac{\pa}{\pa t}\right)\right)$ is well-defined 
as an operator on $\Gamma(A^n_L[0,a''],F)$. \par 
Now take any $c \in (0,1)\cap\Gamma^*$ and $\eta \in (0,1)$ with 
$\eta<c<a''$. Then, 
since $\left| D_l\left(t\dfrac{\pa}{\pa t}\right) \right| (\eta/c)^{nl}$ 
converges to zero as $l\to\infty$, 
$\left| D_{l+1}\left(t\dfrac{\pa}{\pa t}\right) - 
D_l\left(t\dfrac{\pa}{\pa t}\right) \right| (\eta/c)^{nl}$ 
converges to zero as $l\to\infty$ on $A^n_L[0,a'']$. 
Hence, for any $\vv \in \Gamma(A^n_L[0,a''],F)$, 
$$ \left| \dfrac{1}{t_1^l\cdots t_n^l}\left(
D_{l+1}\left(t\dfrac{\pa}{\pa t}\right) - 
D_l\left(t\dfrac{\pa}{\pa t}\right)\right)(\vv) \right|\eta^{nl} $$ 
converges to zero on $A^n_L[c,a'']$ as $l\to\infty$. 
(Here we use \cite[3.1.8]{kedlayaI}.) Then, by \cite[3.1.6]{kedlayaI}, 
it converges to zero on $A^n_L[0,a'']$ as $l\to\infty$. 
Since this is true for any $\eta<a''$ as above, we see that, 
for any $\vv \in \Gamma(A^n_L[0,a''],F)$, 
$\left| \left(D_{l+1}\left(t\dfrac{\pa}{\pa t}\right) - 
D_l\left(t\dfrac{\pa}{\pa t}\right)\right)(\vv) \right|$ converges to zero on 
 $A^n_L[0,a']$ as $l\to\infty$. 
Hence the limit $f(\vv) = \lim_lD_l(\vv)$ exists on 
$A^n_L[0,a']$. We see, by the similar argument to the proof of Lemma 
\ref{3.4.1}, that $f(\vv)$ defines an element in 
$H^0_{L,\xi}(A^n_L[0,a'],F)$ (where $\xi:=(\xi_{11},\cdots,\xi_{n1})$) 
and it is non-zero for some $\vv \in \Gamma(A^n_L[0,a''],F)$. \par 
In the case where $L$ is as in (2) in Proposition \ref{3.4.3}, 
we can define a morphism $M_{\xi} \lra F$ in $\LNM_{A^n_L[0,a'],\Sigma}$ 
which sends $1$ to $f(\vv)$ and it is injective because it is non-zero 
and the rank of $M_{\xi}$ is one. So we have shown that $F$ has a 
non-zero $\Sigma$-constant subobject on $A^n_L[0,a']$. 
So the proof of the claim in the first paragraph is done and hence 
the proof of the proposition is finished in this case. \par 
In the case where $L$ is as in (1) in Proposition \ref{3.4.3}, we 
need to work more. Let us put $H := 
H^0_{L,\xi}(A^n_L[0,a'],F)$. Then, to finish the proof of the 
the claim in the first paragraph (and the proof of the proposition), 
it suffices to prove the following claims: \\
\quad \\
{\bf claim 1.} \,\,\, 
$H$ is a finitely generated $L$-module and the canonical map 
$\pi_{a'}^*H \lra F$ (where $\pi_{a'}: A^n_L[0,a'] \lra \Spm L$ is the 
projection) is injective. \\
\quad \\
{\bf claim 2.} \,\,\, 
$\pi_{a'}^*H$ has a natural structure of a $\Sigma$-constant 
log-$\nabla$-module on $A^n_{L}[0,a']$ which is a subobject of $F$. \\
\quad \\
We give a proof of these claims, following the proof of 
the corresponding claims in Proposition \ref{3.4.3}. 
Remember that we have $b \in (0,a''] \cap \Gamma^*$ such that 
$F|_{A^n_L[0,b]}$ has the form $\cU_{[0,b]}(F')$ for some 
$F' \in \LNM_{A^n_L[0,0],\Sigma}$ (and so it is $\Sigma$-unipotent). 
Let $\pi_b: A^n_L[0,b] \lra \Spm L$ be the projection and for a 
finitely generated $L$-submodule $M$ of $H$, consider the following 
diagram: 
\begin{equation*}
\begin{CD}
\pi_{a'}^* M @>>> F \\ 
@VVV @VVV \\ 
\pi_b^*H^0_{L,\xi}(A^n_L[0,b],F) @>>> 
F |_{A^n_L[0,b]}. 
\end{CD}
\end{equation*}
Then the vertical arrows are injective and we can prove the 
injectivity of the lower horizontal arrow by using the 
expression $F|_{A^n_L[0,b]} = \cU_{[0,b]}(F')$, as in the proof of 
Proposition \ref{3.4.3}. So the map $\pi_{a'}^*M \lra F$
is injective and by using the Noetherianness of $F$, 
we can prove that $H$ is finitely generated and that the map 
$\pi_{a'}^*H \lra F$ is injective in the same way as the proof of 
claim 1 in Proposition \ref{3.4.3}. So we have proved the claim 1. \par 
Then, since $H$ is stable by the action of continuous derivations of 
$\Spm L$ over $\Spm K$, we see that $H$ is locally free 
$\cO_{\Spm L}$-module, and adding the actions of $t_i(\pa/\pa t_i)$
($=$ multiplication by $\xi_{i1}$), we see that 
$H$ defines a $\Sigma$-constant 
object in $\LNM_{A^n_L[0,0],\Sigma}$. Hence 
$\pi_{a'}^*H = \cU_{[0,a']}(H)$ 
has a natural structure of a $\Sigma$-constant 
log-$\nabla$-module on $A^n_{L}[0,a']$ which is a subobject of $F$, 
as desired. So the proof of the claim 2 is also 
finished and we are done. 
\end{proof}

\section{Monodromy of isocrystals and logarithmic extension}

Let us assume given an open immersion $X \hra \ol{X}$ of smooth 
$k$-varieties with 
$Z:=\ol{X}-X = \bigcup_{i=1}^r Z_i$ a simple normal crossing divisor and 
let us denote the log structure on $\ol{X}$ induced by $Z$ by $M$. 
In this section, 
we introduce the notion of `having 
$\Sigma$-unipotent monodromy' for an overconvergent isocrystal 
on $(X,\ol{X})/K$ and prove that, under the assumption that 
$\Sigma = \prod_{i=1}^r \Sigma_i \subseteq \Z_p^r$ is 
$\NID$ and $\NLD$, any overconvergent isocrystal on 
$(X,\ol{X})/K$ having $\Sigma$-unipotent monodromy uniquely extends 
to an isocrystal on log convergent site $((\ol{X},M)/O_K)_{\conv}$ 
with exponents in $\Sigma$. \par 
Also, we can give the following expression of the above result. 
For an overconvergent isocrystal on $(X,\ol{X})/K$, we define 
the notion of the Robba condition (see Definition \ref{defrobba} 
for detail). 
Then, by combining the result stated in the previous paragraph and a little 
argument, we prove the following: If we fix a section 
$\tau: (\Z_p/\Z)^r \lra \Z_p^r$ of the canonical projection, 
any overconvergent isocrystal satisfying 
the Robba condition and the condition $\NLD$ (which stands for 
`non-Liouville difference' of exponents and will be defined below) 
uniquely extends to an isocrystal on log convergent site 
$((\ol{X},M)/O_K)_{\conv}$ with exponents in $\tau(\ol{\Sigma}_{\cE})$, 
where $\ol{\Sigma}_{\cE} \subseteq (\Z_p/\Z)^r$ denotes the set of 
Christol-Mebkhout exponents of $\cE$ which will also be defined below. 
Since the Robba condition is regarded as a $p$-adic analogue of 
the regular singularity of integrable connections, this result can be 
regarded as a $p$-adic analogue of the canonical extension of 
regular singular integrable connections in 
\cite[II 5.4]{deligne}. \par 
{\it Throughout this section, we assume that $K$ is discretely valued.} 
First we recall terminologies on frames in \cite{kedlayaI}. 

\begin{defn}[{\cite[2.2.4]{kedlayaI}}] 
A frame $($or affine frame$)$ is a tuple $(X,\ol{X},P,i,j)$, 
where $X,\ol{X}$ are $k$-varieties, $P$ is a $p$-adic affine formal scheme 
topologically of finite type over $O_K$, $i:\ol{X} \hra P$ is a 
closed immersion over $O_K$, $j: X \hra \ol{X}$ is an open immersion 
over $k$ such that $P$ is formally smooth over $O_K$ on a neighborhood 
of $X$. We say that the frame encloses a pair $(X,\ol{X})$. 
\end{defn} 

\begin{defn}[{\cite[4.2.1]{kedlayaI}}]
A small frame is a frame $(X,\ol{X},P,i,j)$ such that 
$\ol{X}$ is isomorphic to $P_k$ via $i$ and that there exists 
an element $f \in \Gamma(\ol{X},\cO_{\ol{X}})$ with $X=\{f\not=0\}$. 
\end{defn}

Before we introduce some more terminologies on frames, we introduce 
one terminology which is not standard: For a scheme $Z$, 
{\it a decomposition of $Z$ into irreducible components or empty schemes} 
is a decomposition $Z = \bigcup_{i\in I}Z_i$ such that 
$Z = \bigcup_{\scriptstyle i \in I \atop \scriptstyle Z_i \not= \emptyset} 
Z_i$ gives the decomposition of $Z$ into irreducible components. 

\begin{defn} 
Let $X \hra \ol{X}$ be an open immersion of smooth $k$-varieties such that 
$Z:=\ol{X}-X$ is a simple normal crossing divisor. Let 
$Z = \bigcup_{i=1}^rZ_i$ be a decomposition of $Z$ into irreducible 
components or empty schemes. 
Then a standard small frame enclosing $(X,\ol{X})$ 
is a small frame $\cP := (X,\ol{X},P,i,j)$ enclosing $(X,\ol{X})$ 
which satisfies the following condition$:$ 
There exist $t_1, ..., t_r \in \Gamma(P,\cO_P)$ such that, 
if we denote the zero locus of $t_i$ in $P$ by $Q_i$, 
each $Q_i$ is irreducible $($possibly empty$)$ and that 
$Q = \bigcup_{i=1}^rQ_i$ is a relative simple normal crossing 
divisor of $P$ 
satisfying $Z_i = Q_i \times_P \ol{X}$. 
We call a pair $(\cP, (t_1,...,t_r))$ a charted standard small frame. 
When $r=1$, we call $\cP$ a smooth standard small frame and 
the pair $(\cP,t_1)$ a charted smooth standard small frame. 
\end{defn} 

Next we recall the relation between isocrystals on log convergent site 
and log-$\nabla$-modules. This is explained partly in an abstract way 
in \cite{shiho1}, \cite{shiho2}, \cite{shiho3} and 
explained clearly and explicitly in \cite{kedlayaI}. \par 
Let $X \hra \ol{X}$ be an open immersion of smooth $k$-varieties 
such that $Z:=\ol{X}-X$ is a simple normal crossing divisor and let 
$Z = \bigcup_{i=1}^rZ_i$ be the decomposition of $Z$ into irreducible 
components. Let us denote the log structure on $\ol{X}$ 
induced by $Z$ by $M$. 
Let us take a charted standard small frame 
$((X,\ol{X},P,i,j), (t_1,...,t_r))$ enclosing $(X,\ol{X})$ and 
let $Q_i$ $(1 \leq i \leq r)$ be the zero loci of $t_i$ in $P$. 
Then, by \cite[1.2.7]{shiho2}, 
a locally free isocrystal $\cE$ on the log convergent site 
$((\ol{X},M)/O_K)_{\conv}$ induces in natural way 
a log-$\nabla$-module $E_{\cE}$ on $P_K$ with respect to 
$t_1,...,t_r$. Also, an overconvergent isocrystal $\cE$ on 
$(X,\ol{X})/K$ induces in natural way a $\nabla$-module 
on some strict neighborhood of $]X[_P$ in $P_K$. 
Following \cite[6.3.1]{kedlayaI}, we make the following definition: 

\begin{defn} 
Let $X \hra \ol{X}$ and $(X,\ol{X},P,i,j)$ be as above. 
Then a log-$\nabla$-module $E$ on $P_K$ with respect to 
$t_1, ..., t_r$ is called convergent if the restriction of $E$ 
to some strict neighborhood of $]X[_P$ in $P_K$ 
comes from an overconvergent isocrystal on $(X,\ol{X})/K$. 
\end{defn} 

Then we have the following proposition: 

\begin{prop}[{\cite[6.4.1]{kedlayaI}}] 
Let the notations be as above. Then the functor $\cE \mapsto E_{\cE}$ 
induces an equivalence of categories 
$$ 
\left( \begin{aligned} 
& \text{{\rm locally free isocrystals}} \\ 
& \text{{\rm on $((\ol{X},M)/O_K)_{\conv}$}} 
\end{aligned} 
\right) \os{=}{\lra} 
\left( \begin{aligned} 
& \text{{\rm convergent log-$\nabla$-modules}} \\ 
& \text{{\rm on $P_K$ w.r.t. $t_1,...,t_r$}} 
\end{aligned}
\right). $$
\end{prop}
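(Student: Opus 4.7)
The plan is to apply the standard correspondence between (log) isocrystals and modules with integrable (log) connection on the generic fibre of a formal embedding, in the log setting provided by the charted standard small frame. The first step is to extend the log structure $M$ on $\ol{X}$ to a log structure $N$ on $P$ via the divisor $Q = \bigcup_i Q_i$; since $(Q_i)$ is a relative simple normal crossing divisor and $Z_i = Q_i \times_P \ol{X}$, the closed immersion $(\ol{X},M) \hra (P,N)$ is exact and formally log smooth on a neighborhood of $X$, which is the setting in which evaluating log convergent isocrystals is well-behaved.

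Next, I would recall the construction of the functor $\cE \mapsto E_{\cE}$ from \cite[1.2.7]{shiho2}: evaluate $\cE$ at the log pre-widening $(\ol{X},M) \hra (P,N)$ to obtain a locally free $\cO_{P_K}$-module, then extract an integrable log connection $\nabla: E_{\cE} \lra E_{\cE} \otimes \omega^1_{P_K/K}$ (with $\omega^1_{P_K/K}$ generated over $\Omega^1_{P_K/K}$ by $\dlog t_1,\ldots,\dlog t_r$) from the compatibility data on the first infinitesimal log neighborhood of the diagonal in $(P\times P, N\times N)$. The output automatically restricts to the $\nabla$-module attached to the overconvergent isocrystal on $(X,\ol{X})/K$ induced by $\cE$ on some strict neighborhood of $]X[_P$, so the functor lands in convergent log-$\nabla$-modules.

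For fully faithfulness and essential surjectivity I would linearize in the standard way: giving a locally free isocrystal on $((\ol{X},M)/O_K)_{\conv}$ relative to the frame $(P,N)$ is equivalent, by Taylor expansion, to giving a locally free log-$\nabla$-module $(E,\nabla)$ on $P_K$ together with convergence of the associated Taylor cocycle $\theta = \exp(\nabla)$ on the log tube of the diagonal in $(P\times P, N\times N)$. Integrability produces the formal cocycle, and along the log locus $\bigcup_i Q_i$ the log Taylor series in $\dlog t_i$ converges automatically because the monomial $t_i^{n_i}$ absorbs the denominators coming from $\binom{t_i\partial/\partial t_i}{n_i}$. Away from $Q$, convergence of $\theta$ on the log tube is equivalent to the underlying $\nabla$-module (on a strict neighborhood of $]X[_P$) extending to an overconvergent isocrystal in the sense of \cite{berthelotrig}, which is exactly the convergence hypothesis in the definition of a convergent log-$\nabla$-module.

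The main obstacle is the matching of the two convergence conditions: convergence on a strict neighborhood of $]X[_P$ versus convergence of the log Taylor cocycle on the full log tube. This is where the hypothesis that the frame is standard small (and in particular that the $t_i$ are actual functions on $P$ cutting out the components of $Q$) is used, so that one can pass between the two convergence conditions by explicit power series estimates. Since the statement of the proposition is due to Kedlaya \cite[6.4.1]{kedlayaI}, I would simply follow the argument given there for this matching; all other steps in the proof are bookkeeping with the tannakian-style equivalences on both the log convergent and overconvergent sides.
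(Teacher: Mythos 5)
The paper itself offers no proof of this proposition: it is stated purely as a citation of Kedlaya's \cite[6.4.1]{kedlayaI} and used as a black box thereafter. There is thus no ``paper's own proof'' to compare against, and your explicit deferral to Kedlaya for the core argument is an acceptable stance — you are essentially narrating the shape of the argument given there.

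One caution about your middle paragraph: you write that along the log locus $\bigcup_i Q_i$ the log Taylor series in $\dlog t_i$ ``converges automatically because the monomial $t_i^{n_i}$ absorbs the denominators coming from $\binom{t_i\partial/\partial t_i}{n_i}$.'' Phrased this way, it sounds like a preliminary observation, but it is in fact precisely the non-trivial content of Kedlaya's 6.4.1. The log Taylor cocycle of an arbitrary log-$\nabla$-module on $P_K$ need not converge on the full log tube; what Kedlaya proves is that once the restriction to a strict neighborhood of $]X[_P$ is known to come from an overconvergent isocrystal, the integrable log connection forces convergence on the full log tube by an estimate that propagates convergence from the interior to the boundary components $Q_i$. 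So the convergence matching you flag as ``the main obstacle'' and the ``automatic'' convergence along $Q$ are the same step, and it is the heart of the proof rather than bookkeeping. Since you defer to Kedlaya at exactly that point, the sketch is sound, but the phrasing understates where the real work is.
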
 

Keep the above notation. (In particular, we take an isocrystal 
$\cE$ on $((\ol{X},M)/O_K)\allowbreak {}_{\conv}$ and denote the associated 
log-$\nabla$-module by $E_{\cE}$.) Then, by \cite[6.3.4]{kedlayaI}, 
the multisequence 
$$ \left\{ \dfrac{1}{i_1! \cdots i_r!} \left( 
\prod_{j=1}^r \prod_{l=1}^{i_j-1} \left( 
t_j \dfrac{\pa}{\pa t_j} -l \right)\right)(\vv) \right\}_{i_1,...,i_r} $$ 
is $\eta$-null for any $\vv \in \Gamma(P_K,E_{\cE})$. 
For a subset $I$ of $\{1,...,r\}$, let us put 
$Z_I := \bigcap_{i \in I} Z_i, Q_I := \bigcap_{i \in I}Q_i$. 
Then, if $Z_I$ is non-empty, 
the sections $t_i \,(i \in I)$ induce the isomorphism 
$]Z_I[_P \,\cong\, ]Z_I[_{Q_I} \times A^{|I|}_K[0,1)$ 
(where the coordinate of $A^{|I|}_K[0,1)$ is given by $t_i \,(i \in I)$) 
and for any $\vv \in \Gamma(P_K,E_{\cE})$ and for any 
$\eta \in (0,1), a \in (0,1)\cap\Gamma^*$, the multisequence 
$$ \left\{ \dfrac{1}{\prod_{j \in I}i_j!} \left( 
\prod_{j\in I} \prod_{l=1}^{i_j-1} \left( 
t_j \dfrac{\pa}{\pa t_j} -l \right)\right)(\vv) \right\}_{i_j \,(j \in I)} $$ 
is $\eta$-null on $]Z_I[_{Q_I} \times A^{|I|}_K[0,a]$. 
Hence we have the following: 

\begin{prop}
Let the notations be as above. Then, for any $I \subseteq \{1,...,r\}$, 
the restriction of $E_{\cE}$ to $]Z_I[_P \,\cong\, 
]Z_I[_{Q_I} \times A^{|I|}_K[0,1)$ is log-convergent. 
\end{prop}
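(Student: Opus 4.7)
The plan is to reduce log-convergence of $E_{\cE}|_{]Z_I[_P}$ to the $\eta$-nullity statement already displayed in the paragraph immediately preceding the proposition, by exploiting the fact that coherent sheaves on affinoids are globally generated.

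First, I would unwind Definition \ref{log-conv} for the restriction of $E_{\cE}$ to $]Z_I[_{Q_I} \times A^{|I|}_K[0,1)$: the ``log-structure'' variables are the restrictions to $Q_{I,K}$ of the $t_j$ with $j \notin I$, and the polyannulus coordinates are the $t_i$ with $i \in I$. Thus I must show that for every $a' \in (0,1) \cap \Gamma^*$, every $\eta \in (0,1)$ and every section $\vv \in \Gamma(]Z_I[_{Q_I} \times A^{|I|}_K[0,a'], E_{\cE})$, the multisequence
\[
\left\{\frac{1}{\prod_{i \in I}i_i!}\prod_{i \in I}\prod_{l=0}^{i_i-1}\Bigl(t_i\frac{\pa}{\pa t_i}-l\Bigr)(\vv)\right\}_{(i_j)_{j \in I}}
\]
is $\eta$-null on $]Z_I[_{Q_I} \times A^{|I|}_K[0,a']$. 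By the discussion following Definition \ref{log-conv} (in particular the estimate \eqref{log-conveq2}), it suffices to check this on a single set of generators of the module of sections, since the operators $P_{I'}(t\frac{\pa}{\pa t})$ satisfy a Leibniz-type inequality that transfers $\eta$-nullity through $\cO$-linear combinations.

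Second, I would produce a convenient generating set. Because $P$ is affine, $P_K$ is affinoid, and since $E_{\cE}$ is a locally free coherent $\cO_{P_K}$-module, the $\cO(P_K)$-module $\Gamma(P_K,E_{\cE})$ is finitely generated; pick generators $\vv_1,\dots,\vv_n$. The tube $]Z_I[_{Q_I} \times A^{|I|}_K[0,a']$ sits inside $P_K$ as the Weierstrass affinoid subdomain defined by $|t_i| \leq a'$ for $i \in I$, so by Kiehl's theorem (flat base change for coherent sheaves on affinoids) the restrictions of $\vv_1,\dots,\vv_n$ generate $\Gamma(]Z_I[_{Q_I} \times A^{|I|}_K[0,a'], E_{\cE})$ as a module over the corresponding affinoid algebra.

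Third, I would invoke the paragraph directly preceding the proposition: via the isomorphism $]Z_I[_P \cong\, ]Z_I[_{Q_I} \times A^{|I|}_K[0,1)$, the multisequence above, applied to any global section $\vv \in \Gamma(P_K,E_{\cE})$, is $\eta$-null on $]Z_I[_{Q_I} \times A^{|I|}_K[0,a]$ for every $a \in (0,1) \cap \Gamma^*$. Choosing $a \geq a'$ and applying this to each of $\vv_1,\dots,\vv_n$, the $\eta$-nullity on the smaller region $]Z_I[_{Q_I} \times A^{|I|}_K[0,a']$ is immediate, and the reduction from Step~1 finishes the proof. I do not expect any serious obstacle here; the only point requiring a little care is keeping track of the identification of the tube as a Weierstrass subdomain of $P_K$ so that the global generators of $E_{\cE}$ really do restrict to generators of the sections on the polyannulus.
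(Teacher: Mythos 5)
Your argument is correct and matches the paper's approach: the paper derives this proposition with just a ``Hence'' from the preceding paragraph, which establishes $\eta$-nullity of the required multisequence for every global section $\vv \in \Gamma(P_K,E_{\cE})$, and the passage from global sections to arbitrary sections on $]Z_I[_{Q_I}\times A^{|I|}_K[0,a']$ is exactly the reduction-to-generators step justified by the estimate \eqref{log-conveq2} together with the fact that restriction to an affinoid (Weierstrass) subdomain preserves a generating set. You have simply made explicit what the paper leaves implicit; no gap.
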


Next we define the notion of `having exponents in $\Sigma$' for 
an isocrystal on log convergent site. \par 

\begin{defn}\label{defsigexpo}
Let $X \hra \ol{X}$ be an open immersion of smooth $k$-varieties 
such that $Z:=\ol{X}-X$ is a simple normal crossing divisor and let 
$Z = \bigcup_{i=1}^r Z_i$ be the decomposition of $Z$ by irreducible 
components. Let us denote the log structure on $\ol{X}$ 
induced by $Z$ by $M$. Let $\Sigma = \prod_{i=1}^r\Sigma_i$ be a 
subset of $\Z_p^r$. 
Then we say that a locally free isocrystal $\cE$ on $((\ol{X},M)/O_K)_{\conv}$ 
has exponents in $\Sigma$ if there exist an affine open covering 
$\ol{X} = \bigcup_{\alpha\in \Delta} \ol{U}_{\alpha}$ and 
charted standard small frames 
$(
(U_{\alpha},\ol{U}_{\alpha},P_{\alpha},i_{\alpha},j_{\alpha}), 
(t_{\alpha,1}, ..., t_{\alpha,r}))$ enclosing 
$(U_{\alpha},\ol{U}_{\alpha})$ $(\alpha \in \Delta$, where we put 
$U_{\alpha} := X \cap \ol{U}_{\alpha})$ such that, for 
any $\alpha \in \Delta$ and any $i$ $(1 \leq i \leq r)$, 
all the exponents of the log-$\nabla$-module $E_{\cE, \alpha}$ 
on $P_{\alpha,K}$ induced by $\cE$ along the locus $\{t_{\alpha,i}=0\}$ 
are contained in $\Sigma_i$. 
\end{defn}

Then we have the following: 

\begin{lem} 
Let $(X,\ol{X}), Z,\Sigma$ be as above. Then 
a locally free isocrystal $\cE$ on $((\ol{X},M)/O_K)_{\conv}$ 
has exponents in $\Sigma$ if and only if the following condition 
is satisfied$:$ For any 
affine open subscheme $\ol{U} \hra \ol{X}$ and 
any charted standard small frames 
$((U,\ol{U},P,i,j), 
(t_{1}, ..., t_{r}))$ enclosing 
$(U,\ol{U})$ $($where we put 
$U:= X \cap \ol{U})$ such that, for any $i$ $(1 \leq i \leq r)$, 
all the exponents of the log-$\nabla$-module $E_{\cE}$ 
on $P_{K}$ induced by $\cE$ along the locus $\{t_{i}=0\}$ 
are contained in $\Sigma_i$. 
\end{lem}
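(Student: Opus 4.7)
The ``if'' direction is immediate, since the family of frames appearing in Definition \ref{defsigexpo} is a particular instance of the ``any'' condition. The real content is the converse: assuming the property holds for some specific covering $\{\ol{U}_\alpha\}_{\alpha \in \Delta}$ with chosen charted standard small frames $(P_\alpha, (t_{\alpha, 1}, \dots, t_{\alpha, r}))$, show it for an arbitrary charted standard small frame $\cP := ((U, \ol{U}, P, i, j), (t_1, \dots, t_r))$. My strategy is to prove that the set of exponents of $E_{\cE}$ along $\{t_i = 0\}$ depends only on $\cE$ and on the divisor $Z_i \cap \ol{U}$, independently of the choice of frame and of the choice of local equation.

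First, since the exponents of a log-$\nabla$-module are by definition a union over affinoid admissible opens of $P_K$, the question is local on $\ol{U}$. Hence, after replacing $\ol{U}$ by a smaller affine open if necessary, I may assume $\ol{U} \subseteq \ol{U}_\alpha$ for some $\alpha \in \Delta$; restricting the given frames to this smaller base, I reduce to the following statement: if $\cP$ and $\cP_\alpha$ are two charted standard small frames enclosing the same pair $(U, \ol{U})$, then the exponents of $E_{\cE}$ along $\{t_i = 0\}$ on $P_K$ and the exponents of $E_{\cE, \alpha}$ along $\{t_{\alpha, i} = 0\}$ on $P_{\alpha, K}$ agree.

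To prove this, I would form the product frame $P'' := P \times_{O_K} P_\alpha$, with the diagonal closed immersion $\ol{U} \hra P''$ coming from $i$ and $i_\alpha$. The two projections $P'' \ra P$ and $P'' \ra P_\alpha$ are morphisms of enclosing frames; under the equivalence of categories between locally free isocrystals on $((\ol{X}, M)/O_K)_{\conv}$ and convergent log-$\nabla$-modules (\cite[6.4.1]{kedlayaI}), the log-$\nabla$-module $\wt{E}$ on $P''_K$ associated to $\cE$ is compatible via these projections with $E_{\cE}$ and $E_{\cE, \alpha}$. On a suitable rigid analytic neighborhood of $\ol{U}$ in $P''_K$, both $t_i$ (pulled back from $P$) and $t_{\alpha, i}$ (pulled back from $P_\alpha$) cut out smooth lifts of $Z_i \cap \ol{U}$; consequently the ratio $t_i / t_{\alpha, i}$ is a unit there, so $\dlog t_i - \dlog t_{\alpha, i} = \dlog(t_i / t_{\alpha, i})$ is a regular logarithmic form. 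Since the residue is extracted by projecting $\nabla$ onto the $\dlog$-component and reducing modulo the defining ideal, this unit rescaling does not affect the residue endomorphism, and hence its characteristic polynomial and its exponents are unchanged. This yields the desired equality.

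The main obstacle is the rigorous verification that on an appropriate rigid neighborhood of $\ol{U}$ in $P''_K$ the pulled-back functions $t_i$ and $t_{\alpha, i}$ differ by a unit: this reduces to the uniqueness, up to a unit, of local equations for a smooth divisor on a smooth formal scheme, applied to the two smooth lifts of $Z_i \cap \ol{U}$ determined by $t_i$ and $t_{\alpha, i}$ inside the formal completion of $P''$ along $\ol{U}$. Once this is established, the compatibility of residues under a unit rescaling is a standard elementary check, and the lemma follows.
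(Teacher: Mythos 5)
Your reduction steps (the ``if'' direction being trivial, localizing on $\ol{U}$, reducing to comparing two frames over the same affine $\ol{U}$, then passing to a product frame) match the paper's strategy. But there is a genuine gap at the crucial step: the claim that, on a rigid neighborhood of $\ol{U}$ inside the \emph{plain} product $P'' := P \times_{O_K} P_\alpha$, the pulled-back functions $t_i$ and $t_{\alpha,i}$ differ by a unit is false. They cut out two \emph{distinct} smooth divisors in $P''$ (and in its formal completion along the diagonal copy of $\ol{U}$), which happen to restrict to the same divisor $Z_i \cap \ol{U}$ of the diagonal but are not equal as divisors upstairs. The ``uniqueness of local equations of a smooth divisor'' you invoke only applies to two equations of the \emph{same} divisor; it does not say that two different smooth lifts of the same divisor in the special fibre have a unit ratio. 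Concretely, with $\ol{U} = \Spec k[x]$, $Z = \{x=0\}$, $P = \Spf O_K\{t\}$, $P_\alpha = \Spf O_K\{t'\}$, the completion of $P''$ along the diagonal is $O_K\{t\}[[s]]^{\wedge}$ with $s = t-t'$, and $t/t' = 1/(1-s/t)$ has $s/t$ not regular; so $t/t'$ is not a unit, and $\dlog(t/t')$ has a pole along $\{t=0\}$.

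This is exactly why the paper does not use the plain tube $]\ol{U}[_{P_\alpha\times P}$ but the \emph{log} tube $]\ol{U}[^{\log}_{P_\alpha\times P}$, realized as the ordinary tube of the modified formal scheme $(P_\alpha\times P)'$ obtained by base change along $t_{\alpha,i}\mapsto t_i(1+u_i)$. On $(P_\alpha\times P)'$ one has the identity $t_{\alpha,i}/t_i = 1+u_i$ by construction, and along the section at $u=0$ one gets $s^*\dlog t_{\alpha,i} = s^*(\dlog t_i + \dlog(1+u_i)) = \dlog t_i$, which is what makes the residues, hence the exponents, match. So the fix to your argument is to replace the plain product $P''$ by the log-exactified product (equivalently, to use the log tube), at which point your subsequent observation that a unit rescaling of the local parameter does not affect the residue is correct and does close the proof.
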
 

\begin{proof} 
Let $\cE$ be a locally free isocrystal $\cE$ on 
$((\ol{X},M)/O_K)_{\conv}$ with exponents in $\Sigma$ and take 
$((U,\ol{U},P,i,j), (t_{1}, ..., t_{r}))$, $E_{\cE}$ as in the 
statement of the lemma. It suffices to prove that the exponents of 
$E_{\cE}$ along the locus $\{t_{i}=0\}$ are contained in $\Sigma_i$. 
Note that we may work Zariski locally on $P$. \par 
Let $\ol{X} = \bigcup_{\alpha \in \Delta} \ol{U}_{\alpha}$ and take 
$(
(U_{\alpha},\ol{U}_{\alpha},P_{\alpha},i_{\alpha},j_{\alpha}), 
(t_{\alpha,1}, ..., t_{\alpha,r}))$ and $E_{\cE,\alpha}$ 
as in Definition \ref{defsigexpo}. Then, by shrinking $P$, we may assume 
that $\ol{U}$ is contained in $\ol{U}_{\alpha}$ for some $\alpha$. 
By shrinking $P_{\alpha}$, we see that we may assume 
$\ol{U}=\ol{U}_{\alpha}$ to prove the lemma. Moreover, by omitting 
the indices $i$ with $Z_i \cap \ol{U} = \emptyset$, we may assume that 
$Z_i \cap \ol{U} \not= \emptyset$ for any $1 \leq i \leq r$. 
Under this assumption, we
 prove that the exponents of $E_{\cE}$ and those of $E_{\cE,\alpha}$ 
coincide. \par 
Let us consider $P$ (resp. $P_{\alpha}$) as log formal scheme 
with log structure induced by $\{t_1\cdots t_r=0\}$ (resp. 
$\{t_{\alpha,1}\cdots t_{\alpha,r}=0\}$) and consider 
$\ol{U}=\ol{U}_{\alpha}$ as log scheme with log structure induced by 
$\ol{U}\cap Z$. Then we have the following diagram 
\begin{equation}\label{wdeq1}
P_{\alpha,K}=]\ol{U}[_{P_{\alpha}} \os{\pi_1}{\lla}\,
]\ol{U}[^{\log}_{P_{\alpha} \times P} \,\cong\, 
]\ol{U}[_{\hat{{\Bbb A}}^d \times P} \os{\pi_2}{\lra}\, ]\ol{U}[_{P} = 
P_K, 
\end{equation}
where $\pi_i$ $(i=1,2)$ are the maps induced by the projections, 
$d=\dim \ol{U}$ and 
the middle isomorphism is locally defined as follows: 
Let us choose local parameter of the form 
$t_1,...,t_d$ (resp. $t_{\alpha,1}, \cdots t_{\alpha,d}$) 
of $P$ (resp. $P_{\alpha}$), where $t_i, t_{\alpha,i}$ $(1 \leq i \leq r)$ 
are the ones in the data of the charted standard small frames 
taken above. Let 
us put $$ (P_{\alpha} \times P)' := (P_{\alpha} \times P) 
\times_{\Spf O_K\{t_1, ..., t_r, t_{\alpha,1},...,t_{\alpha,r}\}} 
\Spf O_K\{t_1,..., t_r, u_1, ..., u_r\}, $$ 
where the left hand side is 
induced by the map 
$\Spf O_K\{t_1,..., t_r, u_1, ..., u_r\} \lra 
\Spf O_K\{t_1, ..., t_r, t_{\alpha,1},...,t_{\alpha,r}\}$ defined by 
$t_{\alpha,i} \mapsto t_i(1+u_i)$. Then we have 
$]\ol{U}[^{\log}_{P_{\alpha} \times P} = 
]\ol{U}[_{(P_{\alpha} \times P)'}$ by definition and 
$(P_{\alpha} \times P)'$ is formally smooth over $P$ with 
relative coordinate $u_1,...,u_r, t_{r+1}-t_{\alpha,r+1},\cdots, 
t_d-t_{\alpha,d}$. So we have the isomorphism 
$]\ol{U}[^{\log}_{P_{\alpha} \times P} \cong 
]\ol{U}[_{\hat{{\Bbb A}}^d \times P}$ induced by this coordinate, 
which is the middle isomorphism in \eqref{wdeq1}. 
Let us take the 
section $s: ]\ol{U}[_{P} \lra ]\ol{U}[_{\hat{{\Bbb A}}^r \times P}$ 
of $\pi_2$ defined by `the inclusion into the origin'. 
Then, by definition, we see that the log-$\nabla$-module $E_{\cE,\alpha}$ 
restricts to $E_{\cE}$ by $\pi_1\circ s$. Moreover, 
by $\pi_1\circ s$, $\dlog t_{\alpha,i}$ is restricted to 
$s^*(\dlog t_i + \dlog (1+u_i)) = \dlog t_i$. Hence we see that the 
residue of $E_{\cE,\alpha}$ along $\{t_{\alpha,i}=0\}$ restricts to 
the residue of $E_{\cE,\alpha}$ along $\{t_i=0\}$ via 
$\pi_1\circ s$. Hence their exponents coincide and so we are done. 
\end{proof} 

Let $X \hra \ol{X}$ be an open immersion of smooth $k$-varieties 
such that $Z:=\ol{X}-X$ is a smooth divisor. 
Suppose we are given a charted smooth standard small frame 
$((X,\ol{X},P,i,j), t)$ enclosing $(X,\ol{X})$ and 
let $Q$ be the zero loci of $t$ in $P$. 
Then, as we saw above, an overconvergent isocrystal $\cE$ on 
$(X,\ol{X})/K$ induces in natural way a $\nabla$-module $E_{\cE}$ 
on some strict neighborhood of $]X[_P$ in $P_K$. In particular, 
it is defined on $\{x \in P_K \,\vert\, |t(x)| \geq \lam\}$ for 
some $\lam \in (0,1)\cap\Gamma^*$. Hence we can restrict 
$E_{\cE}$ to 
$\{x \in P_K \,\vert\, |t(x)| \geq \lam\} \cap ]Z[_P \cong 
]Z[_Q \times A^1_K[\lam,1) = Q_K \times A^1_K[\lam,1)$ if 
$Z$ is non-empty. 
Using this observation, 
we define the notion of `having $\Sigma$-unipotent monodromy' 
for an overconvergent isocrystal as follows: \par

\begin{defn}\label{defsigmon}
Let $X \hra \ol{X}$ be an open immersion of smooth $k$-varieties 
such that $Z:=\ol{X}-X$ is a simple normal crossing divisor.
Let $Z = \bigcup_{i=1}^rZ_i$ be the decomposition of $Z$ into 
irreducible components and let 
$Z_{\sing}$ be the set of singular points of $Z$. 
Let $\Sigma = \prod_{i=1}^r\Sigma_i$ be a 
subset of $\Z_p^r$. Then we say that an overconvergent isocrystal 
$\cE$ on $(X,\ol{X})/K$ has $\Sigma$-unipotent monodromy 
if there exist an affine open covering 
$\ol{X}-Z_{\sing} = \bigcup_{\alpha\in \Delta} \ol{U}_{\alpha}$ and 
charted smooth standard small frames 
$(
(U_{\alpha},\ol{U}_{\alpha},P_{\alpha},i_{\alpha},j_{\alpha}), 
t_{\alpha})$ enclosing 
$(U_{\alpha},\ol{U}_{\alpha})$ $(\alpha \in \Delta$, where we put 
$U_{\alpha} := X \cap \ol{U}_{\alpha})$ such that, for 
any $\alpha \in \Delta$, there exists some $\lam \in (0,1)\cap\Gamma^*$ 
such that the $\nabla$-module $E_{\cE, \alpha}$ associated to 
$\cE$ is defined on $\{x \in P_{\alpha,K} \,\vert\, 
|t_{\alpha}(x)| \geq \lam\}$ and that the restriction of $E_{\cE,\alpha}$ to 
$Q_{\alpha,K} \times A^1_K[\lam,1)$ is $\Sigma_i$-unipotent $($where 
$Q_{\alpha}$ is the zero locus of $t_{\alpha}$ in $P_{\alpha}$ and $i$ 
is any index with $\ol{U}_{\alpha}\cap Z \subseteq Z_i$, which is unique 
if $\ol{U}_{\alpha}\cap Z$ is non-empty. When $\ol{U}_{\alpha} \cap Z$ 
is empty, we regard this last condition as vacuous one.$)$ 
\end{defn} 

Note that the notion of having $\Sigma$-unipotent monodromy depends 
only on the image $\ol{\Sigma}$ of $\Sigma$ in $(\Z_p/\Z)^r$ in the 
sense that $\cE$ has $\Sigma$-unipotent monodromy if and only if 
$\cE$ has $\tau(\ol{\Sigma})$-unipotent monodromy for some (any) 
section $\tau:(\Z_p/\Z)^r \lra \Z_p$ of the canonical projection 
$\Z_p^r \lra (\Z_p/\Z)^r$. (See Remark \ref{bar}.) Hence we will say 
also that $\cE$ has $\ol{\Sigma}$-unipotent monodromy, by abuse of 
terminology. \par 
The notion of having $\Sigma$-unipotent monodromy does not depend 
on the choice of the data chosen in Definition \ref{defsigmon}, under 
some assumption on $\Sigma$. In fact, 
we have the following lemma: 

\begin{lem}\label{wdsigmon}
Let $(X,\ol{X}), Z=\bigcup_{i=1}^rZ_i, \Sigma$ be as above and assume 
that $\Sigma$ is $\NID$ and $\NLD$. 
Then 
an overconvergent isocrystal $\cE$ on $(X,\ol{X})/K$ has 
$\Sigma$-unipotent monodromy 
if and only if the following condition 
is satisfied$:$ 
For any 
affine open subscheme $\ol{U} \hra \ol{X}-Z_{\sing}$ and 
any charted smooth standard small frame 
$((U,\ol{U},P,i,j), t)$ enclosing 
$(U,\ol{U})$ $($where we put 
$U:= X \cap \ol{U})$, there exists some $\lam \in (0,1)\cap\Gamma^*$ 
such that the $\nabla$-module $E_{\cE}$ associated to 
$\cE$ is defined on $\{x \in P_{K} \,\vert\, 
|t(x)| \geq \lam\}$ and that the restriction of $E_{\cE}$ to 
$Q_{K} \times A^1_K[\lam,1)$ is $\Sigma_i$-unipotent $($where 
$Q$ is the zero locus of $t$ in $P$ and $i$ is any index with 
$\ol{U}\cap Z \subseteq Z_i$, which is unique if $\ol{U}\cap Z$ is 
non-empty. When $\ol{U} \cap Z$ 
is empty, we regard this last condition as vacuous one.$)$ 
\end{lem}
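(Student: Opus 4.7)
The plan is to reduce an arbitrary choice of charted smooth standard small frame $((U,\ol{U},P,i,j),t)$ to the one provided by the defining data $\{((U_\alpha,\ol{U}_\alpha,P_\alpha,i_\alpha,j_\alpha),t_\alpha)\}_\alpha$ via a product-frame construction, and then to transfer the $\Sigma_i$-unipotence property using the results of Section~2. The forward direction is immediate from Definition~\ref{defsigmon}, so I focus on the converse.

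First, since the assertion is local on $\ol{U}$, I may shrink $\ol{U}$ (and accordingly $P$) throughout. The case $\ol{U}\cap Z=\emptyset$ is vacuous, so assume $\ol{U}\cap Z\neq\emptyset$ and that $\ol{U}$ is connected. After further shrinking, I may assume $\ol{U}\subseteq\ol{U}_\alpha$ for some $\alpha$, so that $\ol{U}\cap Z\subseteq Z_i\cap\ol{U}_\alpha$, and both $t$ and $t_\alpha$ cut out the divisor $\ol{U}\cap Z$ on $P$ and $P_\alpha$ respectively.

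Next, form the product frame $\fX:=P\times_{O_K}P_\alpha$, which is formally smooth over $O_K$ near the diagonal image of $\ol{U}$. Write $\wt{t},\wt{t}_\alpha\in\Gamma(\fX,\cO_\fX)$ for the pullbacks of $t,t_\alpha$, and let $\wt{Q},\wt{Q}_\alpha$ be their zero loci. Denote the projections on tubes by $\pi:\,]\ol{U}[_\fX\lra P_K$ and $\pi_\alpha:\,]\ol{U}[_\fX\lra P_{\alpha,K}$. The overconvergent isocrystal $\cE$ supplies, via its stratification, a canonical isomorphism $\pi^*E_\cE\cong\pi_\alpha^*E_{\cE,\alpha}$ on a strict neighborhood of $]U[_\fX$, which serves as the bridge between the two frames. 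Because $\wt{t}$ and $\wt{t}_\alpha$ both restrict to local equations of the same smooth divisor $\ol{U}\cap Z$ in $\ol{U}$ and $\fX$ is formally smooth near $\ol{U}$, the quotient $\wt{t}/\wt{t}_\alpha$ is a unit on a strict neighborhood $V\subseteq\,]\ol{U}[_\fX$ of $]\ol{U}\cap Z[_\fX$. Hence there is a rigid-analytic identification of relative annuli over $V$ relating the pullbacks of $Q_K\times A^1_K[\lambda,1)$ and $Q_{\alpha,K}\times A^1_K[\lambda_\alpha,1)$ for suitable $\lambda,\lambda_\alpha$.

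By hypothesis the restriction of $\pi_\alpha^*E_{\cE,\alpha}$ to such a relative annulus is $\Sigma_i$-unipotent on $]\ol{U}\cap Z[_\fX \times A^1_K[\lambda_\alpha,1)$. Applying overconvergent generization (Proposition~\ref{3.5.3}) on the frame $\fX$, together with Corollary~\ref{3.3.4} and Proposition~\ref{3.3.8} to propagate the $\Sigma_i$-unipotent filtration to a strict neighborhood, yields $\Sigma_i$-unipotence of the pullback log-$\nabla$-module on a relative annulus over a strict neighborhood of $]\ol{U}\cap Z[_\fX$; pushing forward through $\pi$ then gives $\Sigma_i$-unipotence of $E_\cE$ on $Q_K\times A^1_K[\lambda',1)$ for some $\lambda'\in(0,1)\cap\Gamma^*$, as required.

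The main obstacle is the change-of-coordinate step: verifying that $\Sigma_i$-unipotence is preserved under the substitution $\wt{t}=u\wt{t}_\alpha$ with $u$ a unit. The residue along the divisor is unaffected by such a substitution because $\dlog(u\wt{t}_\alpha)=\dlog\wt{t}_\alpha+\dlog u$ and $\dlog u$ is a regular (non-logarithmic) $1$-form, so the building blocks $M_\xi$ transform up to tensoring with a regular rank-one connection which does not disturb the $\Sigma_i$-unipotent filtration; the hypothesis that $\Sigma$ is $\NID$ and $\NLD$ is exactly what is needed for Corollaries~\ref{3.3.4},~\ref{3.3.6} and Proposition~\ref{3.3.8} to apply in identifying the filtrations on both sides and extending them to the desired strict neighborhood.
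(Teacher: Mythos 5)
Your proposal takes a genuinely different route from the paper, and it has gaps that would require substantial additional work to close.

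The paper's argument is a "round trip'' through the frame-independent category of isocrystals on the log convergent site: by Corollary~\ref{3.3.4}, the $\Sigma_i$-unipotence over $Q_{\alpha,K}\times A^1_K[\lam,1)$ lets one extend $E_{\cE,\alpha}$ to a log-$\nabla$-module on all of $P_{\alpha,K}$ with exponents in $\Sigma_i$; this is re-interpreted as an isocrystal $\wt{\cE}$ on $((\ol{U},M|_{\ol{U}})/O_K)_{\conv}$, then expressed in the new frame as a log-$\nabla$-module $E_{\wt{\cE}}$ on $P_K$; finally, the restriction of $E_{\wt{\cE}}$ to $Q_K\times A^1_K[0,1)$ is log-convergent with exponents in $\Sigma_i$, so Proposition~\ref{3.6.2} gives the desired $\Sigma_i$-unipotence. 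You never invoke Proposition~\ref{3.6.2}, yet this is the engine that recovers $\Sigma_i$-unipotence in the new frame; without it (or a substitute) your argument cannot close the loop.

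Your product-frame construction runs into two concrete difficulties. First, the "pushing forward through $\pi$'' step is not an operation that transports $\Sigma_i$-unipotence: $\pi:\,]\ol{U}[_\fX\to P_K$ is a relative polydisc, and knowing $\Sigma_i$-unipotence of $\pi^*E_\cE$ on a region upstairs does not give $\Sigma_i$-unipotence of $E_\cE$ downstairs unless you restrict along a section, and you neither produce the section nor check that its image lands inside the region where the filtration has been established. Second, the change-of-coordinate argument is more delicate than your last paragraph suggests: the unit $u=\wt{t}/\wt{t}_\alpha$ depends on the annulus coordinate, not only on the base, so $\dlog u$ is not a $1$-form pulled back from $Q$-side; the twist by $(\cO,d+\xi\dlog u)$ is therefore not of the form $\pi_1^*(\text{something})$ and does not manifestly preserve the $\Sigma_i$-constant building blocks of Definition~\ref{unipdef}. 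Making this precise is essentially as hard as what Proposition~\ref{3.6.2} provides; the paper packages that entire difficulty into log-convergence (which, unlike $\Sigma$-unipotence, is manifestly intrinsic to the isocrystal) and invokes Proposition~\ref{3.6.2} exactly once. Note also that the product-frame identification $]\ol{U}[^{\log}_{P_\alpha\times P}\cong\,]\ol{U}[_{\hat{\Bbb A}^d\times P}$ does appear in the paper, but only in the preceding lemma about exponents of log-convergent isocrystals, where one works on all of $]\ol{U}[$ rather than on annuli; you are effectively trying to adapt that proof to the overconvergent setting, which is precisely the problem that Proposition~\ref{3.6.2} (and the notion of log-convergence) was introduced to solve.
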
 

\begin{proof} 
Let $\cE$ be an overconvegent isocrystal on $(X,\ol{X})/K$ having 
$\Sigma$-unipotent monodromy, and let 
$((U,\ol{U},P,i,j), t),Q$ and $E_{\cE}$ (resp. 
$\ol{X}-Z_{\sing} = \bigcup_{\alpha\in \Delta} \ol{U}_{\alpha}$, 
$((U_{\alpha},\ol{U}_{\alpha}, \allowbreak P_{\alpha},i_{\alpha},j_{\alpha}), 
t_{\alpha}), Q_{\alpha}$ and $E_{\cE,\alpha}$) 
as in the statement of the lemma 
(resp. Definition \ref{defsigmon}). It suffices to prove that, 
under the assumption $\ol{U}=\ol{U}_{\alpha}$ for some $\alpha$ and 
the assumption $\ol{U} \cap Z \not= \emptyset$, 
the restriction to $E_{\cE}$ to 
$Q_K \times A^1_K[\lam,1)$ is $\Sigma_i$-unipotent for some $\lam$. 
(Here $i$ is the index with $\ol{U}\cap Z \subset Z_i$.) 
\par 
Let $M$ be the log structure on $\ol{X}$ induced by $Z$. 
By assumption, $E_{\cE,\alpha}$ is defined on 
$\{x \in P_{\alpha,K} \,\vert\, 
|t_{\alpha}(x)| \geq \lam\}$ for some $\lam$ and $\Sigma_i$-unipotent on 
$\{x \in P_{\alpha,K} \,\vert\, 
|t_{\alpha}(x)| \geq \lam\} \cap 
]Z\cap\ol{U}_{\alpha}[_{P_{\alpha}} \cong Q_{\alpha,K} \times A^1_K[\lam,1)$. 
Then, by Corollary 
\ref{3.3.4}, we may suppose that the restriction of 
$E_{\cE,\alpha}$ to $Q_{\alpha,K} \times A^1_K[\lam,1)$ extends to 
a log-$\nabla$-module on $Q_{\alpha,K} \times A^1_K[0,1)$ with exponents 
in $\Sigma_i$. Therefore, $E_{\cE,\alpha}$ extends to a log-$\nabla$-module 
on $P_{\alpha,K}$ with respect to $t_{\alpha}$ with exponents in 
$\Sigma_i$, that is, $\cE$ extends to an isocrystal $\wt{\cE}$ on 
$((\ol{U},M|_{\ol{U}})/O_K)_{\conv}$ with exponents in $\Sigma_i$. 
Then $\wt{\cE}$ induces a 
log-$\nabla$-module $E_{\wt{\cE}}$ on $P_K$ with respect to $t$ 
with exponents in $\Sigma_i$ which extends $E_{\cE}$. 
Then the restriction of $E_{\wt{\cE}}$ to 
$Q_K \times A^1_K[0,1)$ is log-convergent and has exponents in $\Sigma_i$. 
Hence it is $\Sigma_i$-unipotent by Propsition \ref{3.6.2}. 
Hence the restriction of $E_{\wt{\cE}}$ to $Q_K \times A^1_K[\lam,1)$, 
which is nothing but the restriction of $E_{\cE}$ to 
$Q_K \times A^1_K[\lam,1)$ if $\lam$ is sufficiently close to $1$, 
is also $\Sigma_i$-unipotent. So we are done. 
\end{proof} 

We make the following auxiliary definition to prove that 
the property of `having $\Sigma$-unipotent monodromy' is 
generic in some sense:

\begin{defn}\label{defsiggenmono}
Let $X \hra \ol{X}$ be an open immersion of smooth $k$-varieties 
such that $Z:=\ol{X}-X$ is a simple normal crossing divisor.
Let $Z = \bigcup_{i=1}^rZ_i$ be the decomposition of $Z$ by 
irreducible components and let 
$Z_{\sing}$ be the set of singular points of $Z$. 
Let $\Sigma = \prod_{i=1}^r\Sigma_i$ be a 
subset of $\Z_p^r$. Then we say that an overconvergent isocrystal 
$\cE$ on $(X,\ol{X})/K$ has $\Sigma$-unipotent generic monodromy 
if there exist affine open subschemes 
$\ol{U}_{\alpha} \subseteq \ol{X}-Z_{\sing}$ containing 
the generic point of $Z_{\alpha}$  $(1 \leq \alpha \leq r)$, 
charted smooth standard small frames 
$(
(U_{\alpha},\ol{U}_{\alpha},P_{\alpha},i_{\alpha},j_{\alpha}), 
t_{\alpha})$ enclosing 
$(U_{\alpha},\ol{U}_{\alpha})$ $($where we put 
$U_{\alpha} := X \cap \ol{U}_{\alpha})$, 
fields $L_{\alpha}$ containing $\Gamma(Q_{\alpha,K},\cO)$ 
$($where $Q_{\alpha}$ is the zero locus of $t_{\alpha}$ in 
$P_{\alpha})$ which is complete with respect to a norm which 
restricts to the supremum norm on $Q_{\alpha,K}$ satisfying 
the following condition$:$ For 
any $1 \leq \alpha \leq r$, there exists some $\lam \in (0,1)\cap\Gamma^*$ 
such that the $\nabla$-module $E_{\cE, \alpha}$ associated to 
$\cE$ is defined on $\{x \in P_{\alpha,K} \,\vert\, 
|t_{\alpha}(x)| \geq \lam\}$ and that the restriction of $E_{\cE,\alpha}$ 
to $A^1_{L_{\alpha}}[\lam,1)$ is $\Sigma_{\alpha}$-unipotent. 
\end{defn}

Note that the notion of having $\Sigma$-unipotent monodromy depends 
only on the image $\ol{\Sigma}$ of $\Sigma$ in $(\Z_p/\Z)^r$ in the 
sense that $\cE$ has $\Sigma$-unipotent monodromy if and only if 
$\cE$ has $\tau(\ol{\Sigma})$-unipotent monodromy for some (or any) 
section $\tau:(\Z_p/\Z)^r \lra \Z_p$ of the canonical projection 
$\Z_p^r \lra (\Z_p/\Z)^r$. (See Remark \ref{bar}.) Hence, 
as in the case of $\Sigma$-unipotent monodromy, it is allowed to say 
also that $\cE$ has $\ol{\Sigma}$-unipotent generic 
monodromy by abuse of terminology, 
where $\ol{\Sigma}$ is the image of $\Sigma$ in $(\Z_p/\Z)^r$. \par 
As for the relation between monodromy and generic monodromy, 
we have the following: 

\begin{prop}\label{sigmon-siggenmon}
Let $(X,\ol{X}), Z=\bigcup_{i=1}^rZ_i, \Sigma$ be as above and assume 
that $\Sigma$ is $\NID$ and $\NLD$. 
Then an overconvergent isocrystal $\cE$ on $(X,\ol{X})/K$ has 
$\Sigma$-unipotent monodromy if and only if it has 
$\Sigma$-unipotent generic monodromy. 
\end{prop}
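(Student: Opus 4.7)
The forward direction is a base-change argument. Given a monodromy cover $\{\ol{U}_{\alpha}\}_{\alpha \in \Delta}$ with frames, I pick for each $i$ an index $\alpha_i$ such that $\ol{U}_{\alpha_i}$ contains the generic point of $Z_i$ (possible since the cover covers $\ol{X}-Z_{\sing}$), shrink $\ol{U}_{\alpha_i}$ if needed so that $Q_{\alpha_i,k}$ is integral, and set $L_i$ to be the completion of $\Frac \Gamma(Q_{\alpha_i,K},\cO)$ with respect to the supremum norm. A $\Sigma_i$-constant object of the form $\pi_1^* F \otimes \pi_2^* M_{\xi}$ base-changes to one of the same form over $L_i$, so restriction to $A^1_{L_i}[\lam,1)$ preserves $\Sigma_i$-unipotence, giving generic monodromy.

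For the backward direction, I apply Proposition \ref{3.4.3} (case (2)) with $A = \Gamma(Q_{i,K},\cO)$, $L = L_i$, $n = 1$, and the quasi-open subinterval $(\lam,1)\subseteq[\lam,1)$. Since $\Sigma_i$ is $\NID$ and $\NLD$ (inherited from $\Sigma$), this yields that $E_{\cE,i}|_{Q_{i,K}\times A^1_K(\lam,1)}$ is $\Sigma_i$-unipotent. For any $\lam'\in(\lam,1)\cap\Gamma^*$, restriction along the inclusion $Q_{i,K}\times A^1_K[\lam',1) \hra Q_{i,K}\times A^1_K(\lam,1)$ preserves $\Sigma_i$-unipotence (the filtration by $\Sigma_i$-constant quotients restricts), hence the monodromy condition is verified on the frame $(P_i,t_i)$ for $\ol{U}_i$.

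To exhibit a full monodromy cover of $\ol{X}-Z_{\sing}$, I take the $\ol{U}_i$'s (handled above) together with an affine cover of $\ol{X}-Z$ (where the condition is vacuous); if the locus $\bigcup_i(Z_i^o\setminus\ol{U}_i)$ remains, I cover it by further affine opens $\ol{V}_p$ around each $p$, with $\ol{V}_p\cap Z\subseteq Z_i$ and a charted smooth standard small frame $(P_{V_p},t_{V_p})$. The overlap $\ol{V}_p\cap\ol{U}_i$ is dense in $\ol{V}_p$ by irreducibility, and isocrystal compatibility across frames transfers $\Sigma_i$-unipotence to $E_{\cE}$ restricted to $]\ol{V}_p\cap\ol{U}_i\cap Z_i[_{Q_{V_p}}\times A^1_K(\lam,1)$. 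Applying Proposition \ref{3.5.3} with $P = Q_{V_p}$, $X = \ol{V}_p\cap\ol{U}_i\cap Z_i$, $V = Q_{V_p,K}$, and $I = (\lam,1)$, I propagate $\Sigma_i$-unipotence to a strict neighborhood $V'$ of $]X[_{Q_{V_p}}$ in $Q_{V_p,K}$; refining the cover so that each $Q_{V_p,K}$ falls within $V'$ completes the construction.

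The main obstacle is this final covering step: the strict neighborhoods from Proposition \ref{3.5.3} generically exclude small rigid discs around the ``bad'' points of $Q_{V_p,k}\setminus(\ol{U}_i\cap Z_i)$, so one must iteratively refine the cover---or equivalently extract the log-extension $\wt{\cE}_i$ on $\ol{U}_i$ via Corollary \ref{3.3.4}, propagate it via Proposition \ref{3.5.3}, and then invoke Proposition \ref{3.6.2} (log-convergence plus exponents in $\Sigma$ implies $\Sigma$-unipotence) to verify the monodromy condition on the refined frames in the spirit of Lemma \ref{wdsigmon}.
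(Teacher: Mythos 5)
Your forward direction and the first step of the backward direction (obtaining $\Sigma_i$-unipotence on the distinguished frames $(\cP_i,t_i)$ via Proposition \ref{3.4.3}, case (2), with $L=L_i$) agree with the paper. The genuine gap is in the covering step, in two places. First, the assertion that ``isocrystal compatibility across frames transfers $\Sigma_i$-unipotence'' from the $t_i$-chart over $Q_{i,K}$ to the $t_{V_p}$-chart over $Q_{V_p,K}$ is precisely what needs proof: the two product decompositions are not canonically identified, and the transfer between two charted smooth standard small frames on the same affine is the content of Lemma \ref{wdsigmon} (one extends to a log isocrystal via Corollary \ref{3.3.4}, realizes it in the new frame, notes that the resulting log-$\nabla$-module on the tube of $Z$ is log-convergent with exponents in $\Sigma$, and concludes by Proposition \ref{3.6.2}); invoking ``compatibility'' is not enough. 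Second, Proposition \ref{3.5.3} is the wrong tool here: as you yourself observe, a strict neighborhood $V'$ of $]\ol{V}_p\cap\ol{U}_i\cap Z_i[$ in $Q_{V_p,K}$ never contains the tubes of the excluded points, and shrinking $\ol{V}_p$ around $p$ cannot help since $]p[$ persists inside the generic fiber; your closing sentence gestures toward a fix but does not execute it.

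The paper's argument is shorter and avoids Proposition \ref{3.5.3} entirely. Given an arbitrary affine $\ol{U}\subseteq\ol{X}-Z_{\sing}$ with charted smooth standard small frame $(\cP,t)$ and $\ol{U}\cap Z\neq\emptyset$, let $i$ be the unique index with $\ol{U}\cap Z\subseteq Z_i$; since $\ol{U}$ and $\ol{U}_i$ both meet the irreducible $Z_i$ in nonempty opens, one may choose an affine $\ol{U}'\subseteq\ol{U}\cap\ol{U}_i$ meeting $Z_i$. On $\ol{U}'$ the monodromy condition holds for $\cP_i|_{\ol{U}'}$ by restriction of the first step; it then holds for $\cP|_{\ol{U}'}$ by Lemma \ref{wdsigmon}; and finally one passes from $\ol{U}'$ back to $\ol{U}$ by a second application of Proposition \ref{3.4.3}, now in case (1), with $A=\Gamma(Q_K,\cO)$ and $L$ the affinoid corresponding to $\ol{U}'$. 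This case-(1) generization is precisely what the paper added relative to Kedlaya (see Remark \ref{e0}); it is the missing ingredient that lets you enlarge from a dense affine open to all of $\ol{U}$ without ever constructing a cover.
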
 

\begin{proof} 
It suffices to prove that `having $\Sigma$-unipotent generic monodromy' 
implies `having $\Sigma$-unipotent monodromy'. Let $\cE$ be 
an overconvergent isocrystal on $(X,\ol{X}) \allowbreak /K$ which has 
$\Sigma$-unipotent generic monodromy, and let us take an 
affine open subscheme $\ol{U} \hra \ol{X}-Z_{\sing}$ and 
a charted smooth standard small frame 
$(\cP:=(U,\ol{U},P,i,j), t)$ enclosing 
$(U,\ol{U})$ $($where $U:= X \cap \ol{U})$ with $\ol{U} \cap Z \not= 
\emptyset$. 
Let $Q$ be the zero locus of $t$ in $P$ and let $i$ be the index with 
$\ol{U}\cap Z \subseteq Z_i$.
Let $E_{\cE}$ be a 
$\nabla$-module on a strict neighborhood of $]U[_P$ in $P_K$ 
induced by $\cE$. It suffices to prove that 
there exists some $\lam \in (0,1)\cap\Gamma^*$ 
such that $E_{\cE}$ 
is defined on $\{x \in P_{K} \,\vert\, 
|t(x)| \geq \lam\}$ and that the restriction of $E_{\cE}$ to 
$Q_{K} \times A^1_K[\lam,1)$ is $\Sigma_i$-unipotent. 
First let us note that, in the case where 
$(\cP, t)$ is equal to 
$(\cP_{\alpha}:=
(U_{\alpha},\ol{U}_{\alpha},P_{\alpha},i_{\alpha},j_{\alpha}), 
t_{\alpha})$ in Definition \ref{defsiggenmono}, this claim is true 
(with $i=\alpha$) by Proposition \ref{3.4.3}. \par 
Now we prove the claim in the previous paragraph in general case. 
Since $\ol{U}\cap Z$ is non-empty, $\ol{U}$ has non-empty 
intersection with some $\ol{U}_{\alpha}$. Then, if we replace $\ol{U}$ by a 
smaller affine open subscheme which is contained also in $\ol{U}_{\alpha}$, 
the claim is true for $(\cP, t)$ by the fact the claim 
is true for the restriction of $(\cP_{\alpha}, t_{\alpha})$ to $\ol{U}$ and 
by Lemma \ref{wdsigmon}. Then we see that the claim is true for general 
$\ol{U}$ and $(\cP, t)$, by Proposition \ref{3.4.3}. 
\end{proof}

Next we introduce the notion of the Robba condition for 
an overconvergent isocrystal. 

\begin{defn}\label{defrobba}
Let $X \hra \ol{X}$ be an open immersion of smooth $k$-varieties 
such that $Z:=\ol{X}-X$ is a simple normal crossing divisor.
Let $Z = \bigcup_{i=1}^rZ_i$ be the decomposition of $Z$ by 
irreducible components and let 
$Z_{\sing}$ be the set of singular points of $Z$. 
Then we say that an overconvergent isocrystal 
$\cE$ on $(X,\ol{X})/K$ satisfies the Robba condition 
if it satisfies the following condition$:$ 
There exist affine open subschemes 
$\ol{U}_{\alpha} \subseteq \ol{X}-Z_{\sing}$ containing 
the generic point of $Z_{\alpha}$  $(1 \leq \alpha \leq r)$, 
charted smooth standard small frames 
$(\cP_{\alpha}:=
(U_{\alpha},\ol{U}_{\alpha},P_{\alpha},i_{\alpha},j_{\alpha}), 
t_{\alpha})$ enclosing 
$(U_{\alpha},\ol{U}_{\alpha})$ $($where we put 
$U_{\alpha} := X \cap \ol{U}_{\alpha})$, 
fields $L_{\alpha}$ containing $\Gamma(Q_{\alpha,K},\cO)$ 
$($where $Q_{\alpha}$ is the zero locus of $t_{\alpha}$ in 
$P_{\alpha})$ which are complete with respect to a norm which 
restricts to the spectral norm on $Q_{\alpha,K}$ 
satisfying the following condition$:$ For 
any $1 \leq \alpha \leq r$, there exists some $\lam \in (0,1)\cap\Gamma^*$ 
such that the $\nabla$-module $E_{\cE, \alpha}$ associated to 
$\cE$ is defined on $\{x \in P_{\alpha,K} \,\vert\, 
|t_{\alpha}(x)| \geq \lam\}$ and that the restriction 
$F_{\cE,\alpha}$ 
of $E_{\cE,\alpha}$ to 
$A^1_{L_{\alpha}}[\lam,1)$ satisfies the Robba condition 
in the sense of {\rm \cite[11.1]{cmsurvey}}. 
\end{defn} 

Let the notation be as in Definition \ref{defrobba} and let $\mu$ be the 
rank of $\cE$. Then, for each $1 \leq \alpha \leq r$, 
we have the notion of an exponent $\Exp(F_{\cE,\alpha})$ 
of $F_{\cE,\alpha}$ 
in the sense of \cite[11.4]{cmsurvey}, which is an element in 
$\Z_p^{\mu}/\os{e}{\sim}$, where $\os{e}{\sim}$ is a certain equivalent 
relation (defined in \cite[10.4]{cmsurvey}). We call an overconvergent 
isocrystal $\cE$ on $(X,\ol{X})/K$ satisfying the Robba condition is 
$\NLD$ if, for each $1 \leq \alpha \leq r$, the difference of any 
two components of any lift of $\Exp(F_{\cE,\alpha})$ to $\Z_p^{\mu}$ is 
$p$-adically non-Liouville. \par 
Let the situation be as above and assume that $\cE$ is an 
overconvergent isocrystal $(X,\ol{X})/K$ which satisfies the 
Robba condition and which is $\NLD$. 
In this case, it is known (\cite[10.5]{cmsurvey}) that 
$\Exp(F_{\cE,\alpha})$ is well-defined as an element of 
$(\Z_p/\Z)^{\mu}$ up to order. So, if we take a representative 
$(a_1,...,a_{\mu}) \in (\Z_p/\Z)^{\mu}$ of $\Exp(F_{\cE,\alpha})$, 
the set $\ol{\Sigma}_{\cE,\alpha}:=\{a_1,...,a_{\mu}\} \subseteq 
\Z_p/\Z$ does not depend on the representative. Let us put 
$\ol{\Sigma}_{\cE} := \prod_{\alpha=1}^r\ol{\Sigma}_{\cE,\alpha} 
\subseteq (\Z_p/\Z)^{r}$. 
Then, by \cite[6-2.6]{cmII} and \cite[12.1]{cmsurvey},  
we see that 
$\cE$ has $\ol{\Sigma}_{\cE}$-unipotent generic monodromy. 
Noting the fact that $\ol{\Sigma}_{\cE}$ is $\NLD$ 
(hence $\tau(\ol{\Sigma}_{\cE})$ is $\NLD$ and $\NLD$ for any 
section $\tau:(\Z_p/\Z)^r \lra \Z_p^r$ of the canonical projection)
and using Proposition \ref{sigmon-siggenmon}, we have the following: 

\begin{prop}\label{robba-sigmon}
Let $X,\ol{X}$ be as above and let $\cE$ be an 
overconvergent isocrystal on $(X,\ol{X})/K$ which satisfies the 
Robba condition and which is $\NLD$. Then, if we define 
$\ol{\Sigma}_{\cE} \subseteq (\Z_p/\Z)^r$ as above, 
$\cE$ has $\ol{\Sigma}_{\cE}$-unipotent monodromy. 
\end{prop}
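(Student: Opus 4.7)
The plan is to reduce to Proposition \ref{sigmon-siggenmon} by establishing $\ol{\Sigma}_{\cE}$-unipotent \emph{generic} monodromy, using the Christol--Mebkhout transfer theorem as the bridge between the Robba condition and $\Sigma$-unipotence in the sense of this paper.

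First, I would fix a section $\tau:(\Z_p/\Z)^r\lra\Z_p^r$ of the canonical projection and set $\Sigma:=\tau(\ol{\Sigma}_{\cE})=\prod_{\alpha=1}^r\Sigma_\alpha$. By construction $\Sigma$ is $\NID$, and since $\cE$ is assumed $\NLD$ the set $\Sigma$ is also $\NLD$ (hence in particular $\PTD$); in this situation, Remark \ref{bar} shows $\ol{\Sigma}_{\cE}$-unipotence and $\Sigma$-unipotence coincide, so it suffices to prove that $\cE$ has $\Sigma$-unipotent monodromy.

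Next I would produce the data required by Definition \ref{defsiggenmono}. Since $\cE$ satisfies the Robba condition, I can use the charted smooth standard small frames $(\cP_\alpha,t_\alpha)$, the fields $L_\alpha$ and the value $\lambda$ supplied by Definition \ref{defrobba}; let $F_{\cE,\alpha}$ be the resulting $\nabla$-module on $A^1_{L_\alpha}[\lambda,1)$. The crucial input is the Christol--Mebkhout transfer theorem \cite[6-2.6]{cmII}, \cite[12.1]{cmsurvey}: since $F_{\cE,\alpha}$ satisfies the Robba condition and the exponents (in the Christol--Mebkhout sense) are $p$-adically non-Liouville, $F_{\cE,\alpha}$ extends (after possibly enlarging $\lambda$) to a log-$\nabla$-module $\wt F_{\cE,\alpha}$ on $A^1_{L_\alpha}[0,1)$ whose exponent along $\{t_\alpha=0\}$ lies in $\Sigma_\alpha$. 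Moreover $\wt F_{\cE,\alpha}$ is automatically log-convergent, because the original $F_{\cE,\alpha}$ comes from an overconvergent isocrystal. Now I would apply Proposition \ref{3.6.2} (with $r=0$, $n=1$, $X=\Spm L_\alpha$ and the set $\Sigma_\alpha$ replacing $\Sigma$, which is $\NID$ and $\NLD$ by the choice of $\tau$) to conclude that $\wt F_{\cE,\alpha}$ is $\Sigma_\alpha$-unipotent on $A^1_{L_\alpha}[0,1)$, and hence by restriction $F_{\cE,\alpha}$ is $\Sigma_\alpha$-unipotent on $A^1_{L_\alpha}[\lambda,1)$ for any $\lambda\in(0,1)\cap\Gamma^*$. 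This exactly verifies Definition \ref{defsiggenmono}, so $\cE$ has $\Sigma$-unipotent generic monodromy.

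Finally, Proposition \ref{sigmon-siggenmon}, which is applicable since $\Sigma$ is $\NID$ and $\NLD$, upgrades $\Sigma$-unipotent generic monodromy to $\Sigma$-unipotent monodromy, completing the proof. The subtle step is the second one, namely matching the Christol--Mebkhout notion of exponent with the notion of exponent used in Definition \ref{lognabdef}: the log-extension produced by the transfer theorem is, a priori, only known to have its Christol--Mebkhout exponents equivalent (modulo $\Z$ and up to permutation) to those of $F_{\cE,\alpha}$, and one must verify that this translates into the eigenvalue/residue statement needed to apply Proposition \ref{3.6.2}. Once this identification is in place, the rest of the argument is formal, and the passage from generic to global monodromy is handled entirely by Proposition \ref{sigmon-siggenmon}.
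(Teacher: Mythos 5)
Your high-level plan --- invoke the Christol--Mebkhout theory to establish $\ol{\Sigma}_{\cE}$-unipotent \emph{generic} monodromy, then upgrade to full monodromy via Proposition \ref{sigmon-siggenmon} --- is exactly the route the paper takes. The paper simply cites \cite[6-2.6]{cmII} and \cite[12.1]{cmsurvey} as directly establishing the $\Sigma$-unipotence of $F_{\cE,\alpha}$ on $A^1_{L_\alpha}[\lam,1)$, without the detour through a log-extension, log-convergence, and Proposition \ref{3.6.2}. Your extra intermediate steps, however, have two genuine gaps.

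First, the claim that $\wt F_{\cE,\alpha}$ is ``automatically log-convergent, because the original $F_{\cE,\alpha}$ comes from an overconvergent isocrystal'' is not justified. Overconvergence of $\cE$ controls the behavior of $F_{\cE,\alpha}$ as $|t_\alpha|\to 1$, whereas log-convergence in the sense of Definition \ref{log-conv} is a condition on the growth of the operators $P_I(t\,\pa/\pa t)$ near $t_\alpha=0$. The link between the Robba condition, solvability, and log-convergence at the origin is real, but it is not the immediate consequence you assert; one needs an actual estimate, or else one should invoke the CM decomposition theorem directly (as the paper does), which makes the whole log-convergence step unnecessary. Second, the application of Proposition \ref{3.6.2} with ``$X=\Spm L_\alpha$'' is not licensed by the proposition's hypotheses: $L_\alpha$ is a complete valued field (e.g.\ the completed fraction field of $\Gamma(Q_{\alpha,K},\cO)$), not an integral affinoid $K$-algebra, so $\Spm L_\alpha$ is not a legitimate choice of $X$ there. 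The core of the proof of Proposition \ref{3.6.2} does work over the field $L$, so one could in principle extract and re-prove that part, but as written you are using the proposition outside its stated scope. You already flag that matching Christol--Mebkhout exponents (defined modulo the equivalence $\os{e}{\sim}$) with the residue-eigenvalue notion of exponent in this paper needs verification; this is a real issue but is absorbed into the paper's phrase ``plus some calculation''. In sum: correct skeleton, same ingredients, but the middle of your argument either needs to be filled in (log-convergence, validity of the appeal to Proposition \ref{3.6.2}) or, better, replaced by the direct invocation of the Christol--Mebkhout decomposition theorem.
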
 

\begin{rem} 
On the other hand, we can prove directly that an overconvergent 
isocrystal $\cE$ having $\Sigma$-unipotent generic monodromy 
for some $\Sigma$ which is $\NID$ and $\NLD$ satisfies the Robba 
condition and in this case, $\ol{\Sigma}_{\cE}$ is contained in the 
image of $\Sigma$ in $(\Z_p/\Z)^r$. 
\end{rem} 

Now we state and prove the main result in this paper, 
which says that, under certain condition, 
overconvergent isocrystals having $\Sigma$-unipotent 
monodromy extends uniquely to an isocrystal on log convergent site 
with exponents in $\Sigma$: 

\begin{thm}\label{main}
Let $X \hra \ol{X}$ be an open immersion of smooth $k$-varieties 
such that $Z :=\ol{X}-X$ is a simple normal crossing divisor and let 
$Z=\bigcup_{i=1}^rZ_i$ be the decomposition of $Z$ into irreducible 
components. Let us denote the log structure on $X$ induced by $Z$ by 
$M$. Let $\Sigma := \prod_{i=1}^r\Sigma_i$ be a subset of 
$\Z_p^r$ which is $\NID$ and $\NLD$. Then we have the canonical 
equivalence of categories 
\begin{equation}\label{maineq1}
j^{\d}: \left( 
\begin{aligned}
& \text{{\rm isocrystals on}} \\ 
& \text{$((\ol{X},M)/O_K)_{\conv}$} \\ 
& \text{{\rm with exponents in $\Sigma$}}
\end{aligned}
\right) 
\os{=}{\lra} 
\left( 
\begin{aligned}
& \text{{\rm overcongent isocrystals}} \\ 
& \text{{\rm on $(X,\ol{X})/K$ having}} \\
& \text{{\rm $\Sigma$-unipotent monodromy}} 
\end{aligned}
\right), 
\end{equation}
which is defined by the restriction. 
\end{thm}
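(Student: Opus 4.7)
The plan is to verify in order: (a) well-definedness of $j^{\d}$, (b) full faithfulness, and (c) essential surjectivity. For (a), fix a charted smooth standard small frame $((U,\ol{U},P,i,j),t)$ with $\ol{U}\cap Z\subseteq Z_i$ and let $\cF$ be an isocrystal on $((\ol{X},M)/O_K)_{\conv}$ with exponents in $\Sigma$. The induced log-$\nabla$-module $E_{\cF}$ on $P_K$ is log-convergent on $]Z_i\cap\ol{U}[_P\,\cong\,Q_K\times A^1_K[0,1)$ by the discussion preceding Definition \ref{defsigexpo} and has exponents in $\Sigma_i$ along $\{t=0\}$. Applying Proposition \ref{3.6.2} with $L$ the completion of $\Frac\Gamma(Q_K,\cO)$ under the spectral norm (case (2); note $Q$ is irreducible by the definition of charted standard small frame) yields $\Sigma_i$-unipotence on a relative annulus, which by Lemma \ref{wdsigmon} suffices for $j^{\d}\cF$ to have $\Sigma$-unipotent monodromy.

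For (b), full faithfulness reduces to the uniqueness of log extension. On a charted standard small frame, a morphism of objects in the right-hand category corresponds to a morphism of $\nabla$-modules on some strict neighborhood of $]X[_P$; applying Proposition \ref{3.3.8} to the graph of such a morphism (as subobject of a direct sum) shows it extends uniquely to a morphism of log-$\nabla$-modules on $P_K$ with exponents in $\Sigma$. Faithfulness is immediate by restriction to $X$.

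The heart of the theorem is essential surjectivity (c). Given $\cE$ overconvergent with $\Sigma$-unipotent monodromy, the construction is local on a charted standard small frame $\cP=((X,\ol{X},P,i,j),(t_1,\ldots,t_r))$ since local constructions automatically glue by (b). Writing $Z_I:=\bigcap_{i\in I}Z_i$ and $Z_I^{\circ}:=Z_I\setminus\bigcup_{j\notin I}Z_j$, I will construct by induction on $|I|$ a log-$\nabla$-module structure with respect to $(t_j)_{j\in I}$ on a neighborhood of $]Z_I[_P$, with exponents in $\prod_{j\in I}\Sigma_j$, compatible with the preceding stages. For $|I|=1$ this proceeds in the three substeps outlined in the introduction: (i) the $\Sigma_i$-unipotent monodromy hypothesis and Corollary \ref{3.3.4} yield a log extension on the full disk $A^1_L[0,1)$ for $L$ the completed function field at a generic point of $Z_i$; (ii) generization (Proposition \ref{3.4.3}) transfers $\Sigma_i$-unipotence from $A^1_L(I)$ to the relative open annulus over $]Z_i^{\circ}[_P$; (iii) overconvergent generization (Proposition \ref{3.5.3}) extends this to a relative annulus over a strict neighborhood of $]Z_i^{\circ}[_P$ in $]Z_i[_P$, and Corollary \ref{3.3.4} promotes the $\Sigma_i$-unipotence to a log extension on the whole tube $]Z_i[_P\cong Q_{i,K}\times A^1_K[0,1)$. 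For the inductive step from $|I|=s$ to $s+1$, fix $i_0\in I$: the induction hypothesis supplies the log extension in the directions $(t_j)_{j\in I\setminus\{i_0\}}$ over a suitable dense open of $]Z_I[_P$, and one reruns generization together with overconvergent generization in the $t_{i_0}$-direction to promote this to a log extension in all of $(t_j)_{j\in I}$; here the $\NID$ and $\NLD$ hypotheses on $\Sigma$ pass to every partial product $\prod_{j\in I}\Sigma_j$ so that the key propositions continue to apply. Assembling over all $I$ and using uniqueness from (b), one obtains a log extension on all of $P_K$ defining the desired preimage of $\cE$.

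The main obstacle will be the inductive step: at a stratum $Z_I$, one must correctly identify the dense open inside $]Z_I[_P$ over which the partial log extension has already been constructed so as to satisfy the hypothesis of Proposition \ref{3.5.3}, and one must verify that the log extensions built in successive directions $(t_j)_{j\in I}$ are mutually compatible and agree with those from strata $Z_J$ with $J\subsetneq I$. This compatibility is not automatic; it is enforced by the uniqueness statements in Proposition \ref{3.3.8} and the full faithfulness of $\cU_I$ on $\Sigma$-unipotent log-$\nabla$-modules (Corollaries \ref{3.3.4} and \ref{3.3.6}), which rigidify every intermediate extension and make the final gluing possible.
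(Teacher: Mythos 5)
Your overall scaffolding — well-definedness, full faithfulness, essential surjectivity — matches the paper's proof, and part (a) is essentially identical to the paper's argument (log-convergence plus Proposition \ref{3.6.2} plus Lemma \ref{wdsigmon}). However, parts (b) and (c) have genuine gaps.

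In (b), you cannot apply Proposition \ref{3.3.8} directly to the graph of $\varphi$. That proposition requires the sub-$\nabla$-module to be given on all of $U = P_K\setminus\{t_1\cdots t_r = 0\}$, the complement of the zero loci. But a morphism of overconvergent isocrystals only gives you $\varphi$ on a strict neighborhood $\{|t_j|\geq\lam\}$ of $]X[_P$, which is strictly smaller than $U$ (the strict neighborhoods shrink to $]X[_P$, whereas $U$ contains all points with $t_i\neq 0$). You first need to extend $\varphi$ over the annuli $\{0 < |t_a| < \lam\}$, and this is not free: the paper does it by covering $P_K$ by the admissible opens $\fX_I$ on which $E_{\wt\cE}, E_{\wt\cF}$ are $\Sigma$-unipotent (Proposition \ref{3.6.2} again), and then invoking the full faithfulness of $\cU_{[b,c]}$ (Corollary \ref{3.3.6}) to extend $\varphi$ from the corresponding $\fY_I$'s. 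If you want the graph-plus-Proposition \ref{3.3.8} argument, you must first run that extension over $U$ — which in effect is the same covering argument.

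In (c), your stratum-by-stratum induction on $|I|$ is a genuinely different organization from the paper's. The paper proves by induction on $a\in\{0,\ldots,r\}$ that $E$ extends with exponents in $\Sigma$ over $\{x\in P_K : \forall j>a, |t_j(x)|\geq\lam\}$. At each step it applies overconvergent generization (Proposition \ref{3.5.3}) along the single new direction $t_a$, then generization (Proposition \ref{3.4.3}), and glues the resulting extension on $Q_{a,K}\times A^1_K[0,c)$ with the already-constructed $E$ on $Q_{a,K}\times A^1_K[\lam,1)$ via the admissible covering \eqref{cov1}--\eqref{cov2}. This linear scheme automatically avoids the cross-stratum compatibility problem that you flag at the end and explicitly leave unresolved. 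As you have written it, the inductive step is not a proof: the ``mutual compatibility'' of the extensions built along different $t_j$-directions and across nested $Z_I$'s needs an actual argument, and neither Proposition \ref{3.3.8} nor Corollary \ref{3.3.6} applies off the shelf until you have specified the exact admissible covering over which you glue. The paper's one-direction-at-a-time induction is precisely the device that turns this into a two-set admissible covering at each stage, where Corollaries \ref{3.3.4} and \ref{3.3.6} immediately give uniqueness of the glued object.
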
 

\begin{rem} 
By using the image of $\ol{\Sigma}$ of $\Sigma$ in $(\Z_p/\Z)^r$ 
instead of $\Sigma$, we can express Theorem \ref{main} in the 
following way: Let $X \hra \ol{X}$, $Z=\bigcup_{i=1}^rZ_i$ and $M$ 
be as above. Let 
Let $\ol{\Sigma} := \prod_{i=1}^r \ol{\Sigma}_i$ be a subset of 
$(\Z_p/\Z)^r$ which is $\NLD$. Then, for each section 
$\tau: (\Z_p/\Z)^r \lra \Z_p$ of the canonical projection 
$\Z_p^r \lra (\Z_p/\Z)^r$, we have the canonical 
equivalence of categories 
\begin{equation*}
j^{\d}: \left( 
\begin{aligned}
& \text{{\rm isocrystals on}} \\ 
& \text{$((\ol{X},M)/O_K)_{\conv}$} \\ 
& \text{{\rm with exponents in $\tau(\ol{\Sigma})$}}
\end{aligned}
\right) 
\os{=}{\lra} 
\left( 
\begin{aligned}
& \text{{\rm overcongent isocrystals}} \\ 
& \text{{\rm on $(X,\ol{X})/K$ having}} \\
& \text{{\rm $\ol{\Sigma}$-unipotent monodromy}} 
\end{aligned}
\right), 
\end{equation*}
which is defined by the restriction. 
\end{rem} 

Before the proof of Theorem \ref{main}, we state an immediate 
corollary, which is regarded as a $p$-adic 
analogue of \cite[II 5.4]{deligne}: 

\begin{cor}
Let $X \subseteq \ol{X}$, $Z=\bigcup_{i=1}^rZ_i$ and $M$ 
be as above and let $\cE$ be an overconvergent isocrystal on 
$(X,\ol{X})/K$ which satisfies the Robba condition and which is 
$\NLD$. Then, for each $\tau: (\Z_p/\Z)^r \lra \Z_p$ of the canonical 
projection $\Z_p^r \lra (\Z_p/\Z)^r$, $\cE$ extends uniquely and 
canonically to an isocrystal on $((\ol{X},M)/O_K)_{\conv}$ with exponents in 
$\tau(\ol{\Sigma}_{\cE})$, where $\ol{\Sigma}_{\cE}$ is the subset of 
$(\Z_p/\Z)^r$ defined in the paragraph before Proposition 
\ref{robba-sigmon}. 
\end{cor}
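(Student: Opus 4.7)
The plan is to deduce this corollary directly by combining Proposition \ref{robba-sigmon} with Theorem \ref{main}, with only a small verification concerning the set $\tau(\ol{\Sigma}_{\cE})$. First I would check that $\tau(\ol{\Sigma}_{\cE}) \subseteq \Z_p^r$ is both $\NID$ and $\NLD$, as required in the hypothesis of Theorem \ref{main}. The condition $\NID$ is automatic: two elements of $\tau(\ol{\Sigma}_{\cE})$ whose difference is a non-zero integer would project to the same element of $(\Z_p/\Z)^r$, contradicting that $\tau$ is a section of the canonical projection. The condition $\NLD$ for $\tau(\ol{\Sigma}_{\cE})$ is by definition equivalent to the $\NLD$ condition on $\ol{\Sigma}_{\cE}$, and this in turn is a consequence of the $\NLD$ condition imposed on $\cE$: since the exponents $\Exp(F_{\cE,\alpha})$ are well-defined up to $\Z^{\mu}$ and permutations by \cite[10.5]{cmsurvey}, the differences of components of lifts are $p$-adically non-Liouville independently of the lift chosen.

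Next I would invoke Proposition \ref{robba-sigmon} to conclude that $\cE$ has $\ol{\Sigma}_{\cE}$-unipotent monodromy, which by the convention established in the paragraph following Definition \ref{defsigmon} is the same as saying that $\cE$ has $\tau(\ol{\Sigma}_{\cE})$-unipotent monodromy. Then Theorem \ref{main}, applied with $\Sigma = \tau(\ol{\Sigma}_{\cE})$, yields the desired extension: since the functor $j^{\d}$ in \eqref{maineq1} is an equivalence of categories, $\cE$ extends uniquely to an isocrystal on $((\ol{X},M)/O_K)_{\conv}$ with exponents in $\tau(\ol{\Sigma}_{\cE})$, and the canonicity follows from the functoriality of the equivalence. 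There is essentially no hard step here: all the substantial work has been done in establishing Proposition \ref{robba-sigmon} (which packages the Christol--Mebkhout theory together with generization, Proposition \ref{3.4.3}) and Theorem \ref{main} itself (which relies on the three key propositions of Section 2).
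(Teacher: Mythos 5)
Your proposal is correct and takes exactly the same route as the paper: the paper's own proof reads simply ``Immediate from Theorem \ref{main} and Proposition \ref{robba-sigmon},'' and your argument merely spells out the hypothesis-checking (that $\tau(\ol{\Sigma}_{\cE})$ is $\NID$ and $\NLD$, which the paper notes explicitly just before Proposition \ref{robba-sigmon} and in the definition of $\NID$ for subsets of $(\ol{K}/\Z)^r$). No gaps.
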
 

\begin{proof} 
Immediate from Theorem \ref{main} and Proposition \ref{robba-sigmon}. 
\end{proof} 

Now we give a proof of Theorem \ref{main}: 

\begin{proof}[Proof of Theorem \ref{main}]
First let us see tht the functor is well-defined, that is, 
for an isocrystal $\cE$ on $((\ol{X},M)/O_K)_{\conv}$ with 
exponents in $\Sigma$, the restriction $j^{\d}\cE$ 
of $\cE$ to the category of 
overconvergent isocrystal on $(X,\ol{X})/K$ has $\Sigma$-unipotent 
monodromy. To see this, let us take 
an affine open subscheme $\ol{U} \hra \ol{X}-Z_{\sing}$ and 
a charted smooth standard small frame 
$((U,\ol{U},P,i,j), t)$ enclosing 
$(U,\ol{U})$ $($where 
$U:= X \cap \ol{U})$ with $\ol{U} \cap Z \not= \emptyset$. 
Let $Q$ be the zero locus of $t$ in $P$. 
Then $\cE$ induces a log-$\nabla$-module $E_{\cE}$ on $P_K$ 
with respect to $t$ whose exponents along $Q_K$ is in $\Sigma_i$, 
where $i$ is the index with $\ol{U}\cap Z \subseteq Z_i$. 
Then the restriction of $E_{\cE}$ to 
$]\ol{U}\cap Z [_P = Q_{K} \times 
A^1_K[0,1)$ has exponents in $\Sigma_i$ and it 
is log-convergent. Hence it is $\Sigma$-unipotent by Proposition 
\ref{3.6.2} and so is the restriction of it to 
$Q_{K} \times A^1_K[\lam,1)$ for any $\lam \in (0,1)\cap\Gamma^*$. 
So $j^{\d}\cE$ has $\Sigma$-unipotent monodromy. \par 
Next we see that, if we are given a morphism $f: \cE \lra \cF$ of 
overconvergent isocrystals on $(X,\ol{X})/K$ having 
$\Sigma$-unipotent monodromy and if we have extensions 
$\wt{\cE}, \wt{\cF}$ of $\cE,\cF$ to isocrystals on 
$((\ol{X},M)/O_K)_{\conv}$ with exponents in $\Sigma$, 
$f$ extends uniquely to the morphism $\wt{f}: \wt{\cE} \lra 
\wt{\cF}$. To see this, we may work Zariski locally on $\ol{X}$. 
Hence we may assume that there exists 
a charted standard small frame 
$((X,\ol{X},P,i,j), 
(t_{1}, ..., t_{r}))$ enclosing 
$(X,\ol{X})$. Let $\varphi: E_{\cE} \lra E_{\cF}$ be a morphism of 
$\nabla$-modules on a strict neighborhood of $]X[_P$ in $P_K$ 
induced by $f$ and let $E_{\wt{\cE}}, E_{\wt{\cF}}$ be the 
log-$\nabla$-module on $P_K$ with respect to $t_1,...,t_r$ 
induced by $\wt{\cE},\wt{\cF}$. Then there exists 
some $\lam \in (0,1)\cap\Gamma^*$ such that both 
$E_{\cE},E_{\cF}$ are defined on 
$\fY := \{x \in P_K \,\vert\, \forall i, |t_i(x)| \geq \lam\}.$ 
Let us consider the admissible covering 
$P_K = \bigcup_{I \subseteq \{1,...,r\}} \fX_I$, where 
$\fX_I$ is defined by 
$$ 
\fX_I := \{x \in P_K\,\vert\, |t_i(x)|<1 \,(i \in I), \, |t_i(x)|\geq \lam 
\,(i \notin I)\}. 
$$
This covering induces the admissible covering 
$\fY = \bigcup_{I \subseteq \{1,...,r\}} \fY_I$, where 
$$ 
\fY_I := 
\{x \in P_K\,\vert\, \lam \leq |t_i(x)|<1 \,(i \in I), \, |t_i(x)|\geq \lam 
\,(i \notin I)\}. 
$$ 
By assumption, $E_{\wt{\cE}}, E_{\wt{\cF}}$ have exponents in 
$\Sigma$ and log-convergent on 
$
\{x \in P_K\,\vert\,t_i(x)=0 \,(i \in I)\} \times A^{|I|}[0,1)$. 
Hence they are $\Sigma$-unipotent on it. Hence they are 
$\Sigma$-unipotent on 
$$ \fX_I = 
\{x \in P_K\,\vert\, t_i(x)=0 \,(i \in I), |t_i(x)|\geq \lam 
\,(i \notin I)\} \times A^{|I|}[0,1), $$
which extends the restriction of $E_{\cE}, E_{\cF}$ to 
$$ \fY_I = 
\{x \in P_K\,\vert\, t_i(x)=0 \,(i \in I), |t_i(x)|\geq \lam 
\,(i \notin I)\} \times A^{|I|}[\lam,1). $$
Hence, by Corollary \ref{3.3.6}, there exists a unique 
morphism $\wt{\varphi}: E_{\wt{\cE}} \lra E_{\wt{\cF}}$ on 
$\fX_I$ which extends the map $\varphi$ on $\fY_I$. On 
\begin{align*}
\fX_I \cap \fX_J = 
\{x \in P_K\,\vert\, \lam 
\leq |t_i(x)| < 1 \,& (i \in (I\cup J)-(I\cap J)), \\ 
& \lam \leq |t_i(x)| \,(i \notin I \cup J)\} \times 
A^{|I\cap J|}_K[0,1),
\end{align*} 
the maps $\wt{\varphi}$ 
we constructed on $\fX_I$ and on $\fX_J$ coincide because 
they both extends the map $\varphi$ on 
\begin{align*}
\fY_I \cap \fY_J = 
\{x \in P_K\,\vert\, \lam \leq |t_i(x)| < 1 \,& 
(i \in (I\cup J)-(I\cap J)), \\ 
& \lam \leq |t_i(x)| \,(i \notin I \cup J)\} \times 
A^{|I\cap J|}_K[\lam,1). 
\end{align*}  
Therefore we can glue them and so we have a unique extension 
$\wt{\varphi}: E_{\wt{\cE}} \lra E_{\wt{\cF}}$ of the map $\varphi$ which 
gives the desired map 
$\wt{f}: \wt{\cE} \lra 
\wt{\cF}$. \par 
Finally, we prove the essential surjectivity of the functor 
$j^{\d}$, that is, any overconvergent isocrystal $\cE$ 
on $(X,\ol{X})/K$ having $\Sigma$-unipotent monodromy necessarily 
extends to an isocrystal on $((\ol{X},M)/O_K)_{\conv}$ with 
exponents in $\Sigma$. 
Since we have already shown the uniqueness of the extension, we may work 
locally on $\ol{X}$, that is, we may assume that there exists 
a charted standard small frame 
$((X,\ol{X},P,i,j), 
(t_{1}, ..., \allowbreak t_{r}))$ enclosing 
$(X,\ol{X})$. Let $Q_i$ be the zero locus of $t_i$ in $P$ and 
put $Z^0_i := Z_i-\bigcup_{j\not= i} (Z_i \cap \allowbreak Z_j)$. 
By definition, $\cE$ induces a $\nabla$-module $E:=E_{\cE}$ on 
$\{x \in P_K \,\vert\, \forall j, |t_j(x)| \geq \lam\}$ 
which is $\Sigma$-unipotent on 
$$\{x \in P_K \,\vert\, \forall j, |t_j(x)| \geq \lam\}\,\cap\, ]Z^0_i[_P 
= \{ x \in Q_{i,K} \,\vert\, \forall j\not=i, |t_j(x)|=1\} 
\times A^1_K[\lam,1)$$ 
for some $\lam \in (0,1)\cap \Gamma^*$. \par 
It suffices to prove that $E$ extends to a log-$\nabla$-module 
on $P_K$ with respect to $t_1,...,t_r$ with exponents in $\Sigma$. 
Hence we are reduced to proving that, 
for any $0 \leq a \leq r$, there exists some $\lam$ such that 
$E$ extends to a log-$\nabla$-module 
on $\{x \in P_K\,\vert\, \forall j>a, |t_j(x)| \geq \lam\}$ 
with respect to $t_1,...,t_r$ with exponents in $\Sigma$. 
We prove this claim by induction on $a$. 
(The case $a=0$ is true by assumption.) \par 
Assume that the claim is true for $a-1$ (for some $\lam$). 
For any $\lam' \in [\lam,1)\cap\Gamma^*$, we have an admissible covering 
$P_K = \{x \in P_K\,\vert\, |t_a(x)| \geq \lam'\} \cup 
\{x \in P_K\,\vert\, |t_a(x)| < 1\}$. Hence we have the induced 
admissible coverings 
\begin{align}
& \{x \in P_K \,\vert\, \forall j>a-1, |t_j(x)| \geq \lam'\} 
\label{cov1} \\ 
= & 
\{x \in P_K \,\vert\, \forall j>a-1, |t_j(x)| \geq \lam'\} \cup 
\{x \in P_K \,\vert\, \forall j>a, |t_j(x)| \geq \lam', \lam'\leq |t_a(x)|<1\} 
 \nonumber \\ 
= & 
\{x \in P_K \,\vert\, \forall j>a-1, |t_j(x)| \geq \lam'\} \cup 
(\{x \in Q_{a,K} \,\vert\, \forall j>a, |t_j(x)| \geq \lam'\} 
\times A^1_K[\lam',1)), \nonumber 
\end{align}
\begin{align}
& \{x \in P_K \,\vert\, \forall j>a, |t_j(x)| \geq \lam'\} \label{cov2} \\ 
= & 
\{x \in P_K \,\vert\, \forall j>a-1, |t_j(x)| \geq \lam'\} \cup 
\{x \in P_K \,\vert\, \forall j>a, |t_j(x)| \geq \lam', |t_a(x)|<1\} 
\nonumber \\ 
= & 
\{x \in P_K \,\vert\, \forall j>a-1, |t_j(x)| \geq \lam'\} \cup 
(\{x \in Q_{a,K} \,\vert\, \forall j>a, |t_j(x)| \geq \lam'\} 
\times A^1_K[0,1)). \nonumber  
\end{align}
By induction hypothesis, the log-$\nabla$-module $E$ with exponents in 
$\Sigma$ is defined on 
$\{x \in P_K \,\vert\, \forall j>a-1, |t_j(x)| \geq \lam\}$ and it is 
$\Sigma$-unipotent on 
$\{x \in Q_{a,K} \,\vert\, \forall j\not=a, |t_j(x)|=1\} 
\times A^1_K[\lam,1)$. 
By \eqref{cov1} and \eqref{cov2}, we see that it suffices to 
extend the restriction of $E$ to 
$\{x \in Q_{a,K} \,\vert\, \forall j>a, |t_j(x)| \geq \lam'\} 
\times A^1_K[\lam',1)$ to some log-$\nabla$-module on  
$\{x \in Q_{a,K} \,\vert\, \forall j>a, |t_j(x)| \geq \lam'\} 
\times A^1_K[0,1)$ with exponents in $\Sigma$, for some $\lam' \in 
(\lam,1)\cap\Gamma^*$. \par 
Take any closed aligned subinterval $[b,c] \subseteq (\lam,1)$ of 
positive length. Then, by the $\Sigma$-unipotence of $E$ on 
$\{x \in Q_{a,K} \,\vert\, \forall j\not=a, |t_j(x)|=1\} 
\times A^1_K[\lam,1)$ and 
Proposition \ref{3.5.3}, we see that there exists 
some $\lam' \in (c,1)\cap\Gamma^*$ such that 
$E$ is $\Sigma$-unipotent on 
$$ 
\{x \in Q_{a,K} \,\vert\, \forall j>a, |t_j(x)|\geq \lam', 
\forall j<a, |t_j(x)|=1\} \times A^1_K[b,c]. $$
Then, by Proposition \ref{3.4.3}, we see that 
$E$ is $\Sigma$-unipotent on 
$$ 
\{x \in Q_{a,K} \,\vert\, \forall j>a, |t_j(x)|\geq \lam'\} 
\times A^1_K(b,c). $$
So it can be extended to a log-$\nabla$-module with exponents in $\Sigma$ 
on 
$$ 
\{x \in Q_{a,K} \,\vert\, \forall j>a, |t_j(x)|\geq \lam'\} 
\times A^1_K[0,c). $$ 
Hence we can glue it with $E$ on 
$\{x \in Q_{a,K} \,\vert\, \forall j>a, |t_j(x)| \geq \lam'\} 
\times A^1_K[\lam,1)$ to give a log-$\nabla$-module with exponents 
in $\Sigma$ on 
$$ 
\{x \in Q_{a,K} \,\vert\, \forall j>a, |t_j(x)|\geq \lam'\} 
\times A^1_K[0,1), $$ 
as desired. So the proof of the theorem is finished. 
\end{proof}

\end{document}